\theoremstyle{plain}
\newtheorem{theo}{Theorem}
\newtheorem{lem}{Lemma}
\newtheorem{prop}{Proposition}
\newtheorem{cor}{Corollary}
\theoremstyle{definition}
\newtheorem{remark}{Remark}
\theoremstyle{remark}
\newcommand{\F}{{\mathbb{F}}}%non-Arch. local field
\newcommand{\R}{\mathcal{O}}%Ring of integers of the local field F
\newcommand{\PP}{\mathfrak{p}}%maximal ideal 
\newcommand{\co}{\widetilde}%covering indicator
\newcommand{\m}{\mathbf}%matrix
\newcommand{\cm}{\mathtt}%elemnt of the covering group
\newcommand{\SL}{G^1}%SL_2(F)
\newcommand{\GL}{G}% GL_2(F)
\newcommand{\TG}{T}%Torus of GL
\newcommand{\TS}{T^1}% Torus of SL
\newcommand{\BG}{B}%Borel of GL
\newcommand{\BS}{B^1}%Borel of SL
\newcommand{\UNG}{N}%Unipotant radical of B in gl
\newcommand{\UNS}{N^{1}}%unipotant radical of b IN sl
\newcommand{\KS}{K^1}%maximal compact of SL
\newcommand{\KG}{K}%maximal compact of GL
\newcommand{\TSL}{\widetilde{T^1}}%torus of covering SL
\newcommand{\TGL}{\co{\TG}}%torus of coverig GL
\newcommand{\BSL}{\co{B^1}}%Borel of covering SL
\newcommand{\BGL}{\co{B}}%Borel of covering GL
\newcommand{\KSL}{\co{K^1}}% max compact of covering SL
\newcommand{\KGL}{\co{K}}% max copact of covering GL
\newcommand{\AS}{A^1} %maximal abelian subgroup of covering SL
\newcommand{\AG}{A}%maximal abelian subgroup of covering GL
\newcommand{\chis}{\chi}%central character of the torus in covering SL
\newcommand{\chig}{\chi'}%central character of the torus in covering GL
\newcommand{\pis}{\rho}%Stone-Von Neumman rep of torus of covering SL
\newcommand{\pig}{\rho'}%Stone-Von Neumman rep of torus of covering GL
\newcommand{\psrg}{\pi'}%principal series rep of covering GL_2
\newcommand{\ress}{\mathrm{Res}_{\KSL}\mathrm{Ind}_{\BSL}^{\co\SL}\rho} % K-types of \pi
\newcommand{\resg}{\mathrm{Res}_{\KGL}(\mathrm{Ind}_{\BGL}^{\co\GL}\pig)} % K-types of \pi'
\newcommand{\Res}{\mathrm{Res}}%Restriction functor
\newcommand{\Ind}{\mathrm{Ind}}% Induction functor
\newcommand{\Hom}{\mathrm{Hom}}% Hom space
\newcommand{\KSec}{\co{K}_0}%section of K in \KSL
\newcommand{\w}{\co{w}}%Weyl group element
\newcommand{\val}{\mathrm{val}}%valuation
\newcommand{\ddd}{\mathrm{dg}}%diagonal matrix
\newcommand{\lt}{\mathrm{lt}}%lowertriangular matrix
\newcommand{\n}{\underline{n}}%=n for n odd and n/2 for n even
\newcommand{\done}[1]{\iota(#1)}%notation \d1{a}=(\ddd(a),1) in \co\SL
\newcommand{\varpichar}{\vartheta}
\newcommand {\V}[1]{\co V_{{#1}}}%(\Ind_{{\BSL}\cap\KSL}^{\KSL}{\chis}_{i})
\title{Branching Rules for $n$-fold Covering Groups of $\mathrm{SL}_2$ over a Non-Archimedean Local Field}
\author[C. Karimianpour]{Camelia Karimianpour}
\address{Camelia Karimianpour, Department of Mathematics and Statistics, University of Ottawa, 585 King Edward, Ottawa, ON K1N
	6N5, Canada.}
\email{ckari099@uottawa.ca}
\begin{document}
\begin{abstract}
Let $\co \SL$ be the $n$-fold covering group of the special linear group of degree two, over a non-Archimedean local field. We determine the decomposition into irreducibles of the restriction of the principal series representations of $\co \SL$ to a maximal compact subgroup of $\co\SL$.
\end{abstract}
\let\thefootnote\relax\footnote{{\it Keywords}: local field, covering group, representation, Hilbert symbol, $\mathsf{K}$-type  }
\let\thefootnote\relax\footnote{{\it 2010 Mathematics Subject Classification}: 20G05  }
\maketitle
%==================================================
\section{Introduction}

In this paper, covering groups, also known in the literature as metaplectic groups, are central extensions of a simply connected simple and split algebraic group, over a non-Archimedean local field $\F$, %(adjectives are from Moore's paper)
by the group of the $n$-th roots of unity, $\mu_n$. The problem of determining this class of groups was studied by Steinberg~\cite{Steinberg} and Moore~\cite{Moore} in $1968$, and further completed by Matsumoto~\cite{Matsumoto} in $1969$ for simply connected Chevalley groups. Around the same time, Kubota independently constructed $n$-fold covering groups of $\mathrm{SL}_2$~\cite{kubota} and $\mathrm{GL}_2$~\cite{kubota2}, by means of presenting an explicit $2$-cocycle. Kubota's cocycle is expressed in terms of the $n$-th Hilbert symbol.

Since then, there have been a number of studies of representations of this class of groups from different perspectives, among them being the work of H. Arit\"urk~\cite{Ariturk}, D. A. Kazhdan and S. J. Patterson~\cite{Kazhdan-Patterson}, C. Moen~\cite{Moen},  D. Joyner~\cite{JoynerII,JoynerI}, G. Savin~\cite{Savin}, M. Weissman and T. Howard~\cite{Weissman-Howard}, and P. J. McNamara~\cite{mcnamara}. 

In this paper, we consider the principal series representations of the $n$-fold covering group $\widetilde{\mathrm{SL}}_2(\F)$ of $\mathrm{SL}_2(\F)$. The principal series representations of $\widetilde{\mathrm{SL}}_2(\F)$ are those representations that are induced from the inverse image $\BSL$ of a Borel subgroup $\BS$ of $\mathrm{SL}_2(\F)$. The construction of those representations of $\BSL$ that are trivial on the unipotent radical of $\BSL$ brings us to the study of the irreducible representations of the metaplectic torus $\TSL$, i.e., the inverse image of the split torus, $\TS$, of $\mathrm{SL}_2(\F)$ in $\widetilde{\mathrm{SL}}_2(\F)$.

An important feature of $\TSL$, which differentiates the nature of its representations from those of a linear torus, is that it is not abelian. However, it is a Heisenberg group and its irreducible representations are governed by the Stone-von Neumann theorem. The Stone-von Neumann theorem characterizes irreducible representations of Heisenberg groups, according to their central characters. Indeed, given a character of the centre of a Heisenberg group that satisfies some mild conditions, the Stone-von Neumann theorem provides a recipe to construct the corresponding, unique up to isomorphism,  irreducible representation of the Heisenberg group. The construction involves induction from a maximal abelian subgroup of the Heisenberg group. We only consider those characters of the centre of $\TSL$ where $\mu_n$ acts by a fixed faithful character.

Once an irreducible representation $\rho_{\chi}$, with central character $\chi$, of $\TSL$ is obtained, the principal series representation $\pi_{\chi}$ of $\widetilde{\mathrm{SL}}_2(\F)$ is $\Ind_{\BSL}^{\widetilde{\mathrm{SL}}_2(\F)}\rho_\chi$, where $\rho_\chi$ is trivially extended on the unipotent radical subgroup  of $\BSL$. These representations admit several open questions. The question we consider, and answer, in this paper is to decompose $\pi_{\chi}$ upon the restriction to the inverse image $\KSL$ of a maximal compact subgroup $\KS$ of $\mathrm{SL}_2(\F)$. We refer to this decomposition as the $\mathsf{K}$-type decomposition. We assume $n|q-1$, where $q$ is the size of the residue field of $\F$, so that the central extension $\co {\mathrm{SL}_2}(\F)$ splits over $\KS$.

The study the decomposition of the restriction of representations to a particular subgroup is a common technique in representation theory. In the theory of real Lie groups, restriction to maximal compact subgroups retains a lot of information from the representation; in fact, such a restriction is a key step towards classifying irreducible unitary representations. In the case of reductive groups over $p$-adic fields, investigating the decomposition upon restriction to maximal compact subgroups reveals a finer structure of the representation, in the interests of recovering essential information about the original representation.

The $\mathsf{K}$-type problem for reductive $p$-adic groups is visited and solved in certain cases, including the principal series representations of $\mathrm{GL}(3)$~\cite{Campbell-NevinsUnrm,Campbell-Nevinsram,Onn-Singla}, and $\mathrm{SL}(2)$~\cite{monica,NevinsPatterns}, representations of $\mathrm{GL}(2)$~\cite{CasselmanKtype}, and supercuspidal representations of $\mathrm{SL}(2)$~\cite{Nevins-supercusp}.

The main idea is to reduce the problem to calculating the dimensions of certain finite-dimensional Hecke algebras. The key calculation for determining the decomposition is the determination of certain double cosets that support intertwining operators for the restricted principal series representation (Proposition~\ref{heckedim} and Proposition~\ref{multiplicityHeckesl}). 

Our method is aligned with the one in~\cite{monica} for the linear group $\mathrm{SL}_2(\F)$; however, the technicalities in the covering case are much more involved than the linear case, and the results are fairly different. For instance, the $\mathsf{K}$-type decomposition is no longer multiplicity-free (Corollary~\ref{multiplicitysl}).

This paper is organized as follows. In Section~\ref{NotationandBackground}, we present Kubota's construction of the covering group of $\mathrm{SL}_2(\F)$, in Section~\ref{StructureTheory} we overview the structure of this covering group and compute some subgroups of our interest. We compute the $\mathsf{K}$-type decomposition for the principal series representations of $\co{\mathrm{SL}_2}(\F)$ in Section~\ref{branching-rule-section}. This decomposition is completed by considering a similar problem for the $n$-fold covering group of $\mathrm{GL}_2(\F)$ in Section~\ref{branching-rule-sectionGL}. Our main result, Theorem~\ref{MainTheoremofK-types}, is stated in Section~\ref{mainresultsection}.

%=========================================
\section{Notation and Background}\label{NotationandBackground}
Let $\F$\label{symF} be a non-Archimedean local field with the ring of integers $\R$\label{symR} and the maximal ideal $\PP$\label{symPP} of $\R$. Let $\kappa:=\R/\PP$\label{symkappa} be the residue field and $q=|\kappa|$\label{symq} be its cardinality. Let $\R^\times$ denote the group of units in $\R$. We fix a unoformizing element $\varpi$\label{symvarpi} of $\PP$. For every $x\in \F^{\times}$, the valuation of $x$ is denoted by $\val(x)$, and $|x|=q^{-\val(x)}$. Let $n\geq 2$ be an integer such that $n|q-1$. Set $\n=n$ if $n$ is odd, and $\n=\frac{n}{2}$ if $n$ is even. We assume that $\F$ contains the group $\mu_n$ of $n$-th roots of unity. 
	
Set $\GL=\mathrm{GL}_2(\F)$, and $\SL=\mathrm{SL}_2(\F)$. Let $\BS$ ($\BG$) be the standard Borel subgroup of $\SL$ $(\GL)$ and $\UNS$ ($\UNG$) be its unipotent radical, and let $\TS$ $(\TG)$ be the standard torus in $\SL$ $(\GL)$. Set $\KS=\mathrm{SL}_2(\R)$ ($\KG=\mathrm{GL}_2(\R)$) to be a maximal compact subgroup of $\SL$ ($\GL$). By the Iwasawa decomposition, we have $\SL=\TS \UNS \KS$ ($\GL=\TG \UNG \KG$). Our object of study is the central extension $\co \SL$ of $\SL$ by $\mu_n$,
\begin{equation}\label{centralextension}
0\to \mu_n\stackrel{\mathsf{i}}\to \co \SL\stackrel{\mathsf{p}}\to \SL\to 0,
\end{equation}
where $\mathsf{i}$ and $\mathsf{p}$ are natural injection and projection maps respectively. The group $\co\SL$, which we call the $n$-fold covering group of $\SL$, is constructed explicitly by Kubota~\cite{kubota}. In order to describe Kubota's construction, we need knowledge of the $n$-th Hilbert symbol $(\:,\:)_n: \F^{\times}\times\F^{\times}\to \mu_n$. Under our assumption on $n$, the $n$-th Hilbert symbol is given via  $(a,b)_n=\overline {c}^{\frac{q-1}{n}}$, where $c=(-1)^{\val(a)\val(b)}\frac{{a}^{\val(b)}}{b^{\val(a)}}$, and $\overline{c}$ is the image of $c$ in $\kappa^{\times}$.
\begin{comment}
\[
(a,b)_n=\overline{(-1)^{\val(a)\val(b)}\frac{{a}^{\val(b)}}{b^{\val(a)}}}^{\frac{q-1}{n}},
\]

where the over line symbol indicates the image in the residue field $\kappa^{\times}$. 
\end{comment}
We benefit from the properties of the $n$-th Hilbert symbol, which can be found in~\cite[Ch XIV]{localfieldSerre}. In particular, we benefit extensively from the following fact: $(a,b)_n=1$ for all $a\in \F^{\times}$, if and only if $ b\in {\F^{\times}}^n$.  

Define the map $\beta:\SL\times{\SL}\rightarrow{\mu_n}$ by
\begin{eqnarray}\label{eqbeta}
\beta(\m{g}_1,\m{g}_2)=\left(\frac{X(\m{g}_1\m{g}_2)}{X(\m{g}_1)},\frac{X(\m{g}_1\m{g}_2)}{X(\m{g}_2)}\right)_n,\text{ where }X\left(\left(\begin{array}{cc}a&b\\
c&d\end{array}\right)\right)=\begin{cases}c&\quad\text{if $c\neq{0}$}\\
d&\quad\text{otherwise.}
\end{cases} 
\end{eqnarray}
In~\cite{kubota} Kubota proved that $\beta$ is a non-trivial $2$-cocycle in the continuous second cohomology group of $\SL$ with coefficients in $\mu_n$; whence, $\co\SL=\SL\times \mu_n$ as a set, with the multiplication given via $(\m g_1, \zeta_1)(\m g_2, \zeta_2)=(\m g_1\m g_2,\beta(\m g_1,\m g_2) \zeta_1\zeta_2)$, for all $\m g_1, \m g_2\in \SL$ and $\zeta_1,\zeta_2\in \mu_n$. 

In 1969, Kubota extends the map $\beta$ to a $2$-cocycle $\beta'$ for $\co \GL$ in~\cite{kubota2}, which defines the $n$-fold covering group $\co \GL\cong \F^{\times}\ltimes\co \SL$ of $\GL$. The covering group $\co \GL$ fits into the exact sequence $0\to \mu_n\stackrel{\mathsf{i}}\to \co \GL\stackrel{\mathsf{p}}\to \GL\to 0$.

For all $t, s\in \F^{\times}$, set $\ddd(t)=\left(\begin{smallmatrix}
t& 0\\
0&t^{-1}
\end{smallmatrix}\right)\in \TS$, $\ddd{(t, s)}= \left(\begin{smallmatrix}
t& 0\\
0&s
\end{smallmatrix}\right)\in \TG$, and $\done{t}=\left(\ddd(t), 1\right)\in \TSL$. Set $w=\left(\begin{smallmatrix}
0 & 1\\
-1& 0\\
\end{smallmatrix}
\right)$, and $\w=(w,1)\in \co\SL$. Moreover, for matrices $X$ and $Y$, with $Y$ invertible, let $X^Y:=Y^{-1}XY$ and ${}^YX:=YXY^{-1}$ denote the conjugations of $X$ by $Y$.

%======================================
\section{Structure Theory}\label{StructureTheory}
For any subgroup $H$ of $\SL$, the inverse image $\co H:=\mathsf{p}^{-1}(H)$ is a subgroup of $\co \SL$. In particular, we are interested in the subgroups $\TSL$, $\BSL$, and $\KSL$ of $\co\SL$. We say the central extension splits over the subgroup $H$ of $\SL$, if there exists an isomorphism that yields $\mathsf{p}(H)^{-1}\cong H\times \mu_n$. 

It is not difficult to see that $\TSL$ is not commutative, and hence, the central extension does not split over $\TS$ (and therefore neither over $\BS$). Additionally, it is easy to see that the commutator subgroup $[\TSL, \TSL]\cong \mu_{\n}$ is central in~\eqref{centralextension}; which implies that $\TSL$ is a two-step nilpotent group, also known as a Heisenberg group. Clearly, $\mu_n\in Z(\TSL)$, indeed, using the properties of the Hilbert symbol and some elementary calculation, one can show that $Z(\TSL)=\{({\ddd(t)}
,\zeta)\mid t\in{{\F^\times}^{\n}},\zeta\in\mu_n\}.$ 
\begin{lem}\label{indexofcenter}
	The index of $Z(\TSL)$ in $\TSL$ is $\n^2$.
\end{lem}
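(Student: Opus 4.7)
The plan is to use the explicit description $Z(\TSL) = \{(\ddd(t), \zeta) : t \in (\F^{\times})^{\n},\ \zeta \in \mu_n\}$ recorded just before the statement in order to reduce the computation of $[\TSL : Z(\TSL)]$ to counting $\n$-th power classes in $\F^{\times}$, and then to perform this count using the standard structure theory of $\F^{\times}$.

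First, I would exploit the projection $\mathsf{p}\colon \TSL \to \TS$ from \eqref{centralextension}, whose kernel is $\mu_n$. Since $\mu_n \subset Z(\TSL)$, the map $\mathsf{p}$ descends to a surjection $\TSL/Z(\TSL) \twoheadrightarrow \TS/\mathsf{p}(Z(\TSL))$, and because $\ker \mathsf{p}$ is already contained in $Z(\TSL)$ this surjection is in fact an isomorphism. Identifying $\TS$ with $\F^{\times}$ via $\ddd(t) \leftrightarrow t$, the given description of $Z(\TSL)$ shows that $\mathsf{p}(Z(\TSL))$ corresponds to $(\F^{\times})^{\n}$. Hence $\TSL/Z(\TSL) \cong \F^{\times}/(\F^{\times})^{\n}$, and the lemma reduces to showing $|\F^{\times}/(\F^{\times})^{\n}| = \n^2$.

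Next, I would compute this cardinality from the decomposition $\F^{\times} \cong \varpi^{\mathbb{Z}} \times \mu_{q-1} \times U^{(1)}$, where $U^{(1)} = 1 + \PP$. The standing hypothesis $n \mid q-1$ forces $\n$ to be coprime to the residue characteristic $p$. Consequently the map $x \mapsto x^{\n}$ is a bijection on the pro-$p$ group $U^{(1)}$, so that factor contributes trivially; the factor $\varpi^{\mathbb{Z}} \cong \mathbb{Z}$ contributes a quotient of order $\n$; and since $\n \mid q-1$, the cyclic group $\mu_{q-1}$ contributes $\mu_{q-1}/\mu_{q-1}^{\n}$, again of order $\n$. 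Multiplying yields $\n \cdot \n \cdot 1 = \n^2$, as required.

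The only point requiring real care is the bookkeeping of the convention $\n = n$ versus $\n = n/2$ according to the parity of $n$; in both cases $\n \mid n \mid q-1$, so $\gcd(\n, p) = 1$ and $\mu_{\n} \subset \mu_{q-1}$, and the computation goes through uniformly. I do not anticipate any substantive obstacle beyond correctly unpacking the description of $Z(\TSL)$ and invoking this classical count of $\n$-th power classes in a local field.
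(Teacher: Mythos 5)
Your proposal is correct and follows essentially the same route as the paper: reduce $[\TSL:Z(\TSL)]$ to $[\F^{\times}:(\F^{\times})^{\n}]$ via the explicit description of $Z(\TSL)$, split off the valuation factor $\mathbb{Z}$, and compute the unit-group contribution using that $\n$ is coprime to $p$. The only cosmetic difference is that the paper computes $[\R^{\times}:(\R^{\times})^{\n}]$ by counting the kernel of the $\n$-th power map on $\R^{\times}$ (via Hensel's lemma), while you decompose $\R^{\times}\cong\mu_{q-1}\times(1+\PP)$ and read off the index factor by factor; the inputs and conclusion are the same.
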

\begin{proof}
Note that $[\TSL:Z(\TSL)]=[\F^{\times}:{\F^{\times}}^{\n}]$, which because ${\F^{\times}}\cong\mathcal{O}^{\times}\times\mathbb{Z}$, is equal to $\n[\mathcal{O}^{\times}:{\mathcal{O}^{\times}}^{\n}]$.  Consider the homomorphism $\phi:\mathcal{O}^{\times}\rightarrow{\mathcal{O}^{\times}}^{\n}$. Then $\ker(\phi)=\{x\in{\mathcal{O}^\times}|\:x^{\n}=1\}$. Note that $f(x)=x^{\n}-1=0$ has $(\n,q-1)$, which equals $\n$ under our assumption of $n|q-1$, solutions in the cyclic group $\kappa^\times$. By Hensel's lemma, any such root in $\kappa^{\times}$ lifts uniquely to a root in $\R^{\times}$. It follows that, $|\ker(\phi)|=\n$. Therefore,  $[\mathcal{O}^{\times}:{\mathcal{O}^{\times}}^{\n}]=|\ker(\phi)|=\n$, and the result follows.
\end{proof}
In order to construct principal series representations of $\co \SL$ in Section~\ref{branching-rule-section}, we need to construct irreducible representations of the Heisenberg group $\TSL$. To do so, we need to identify a maximal abelian subgroup of $\TSL$. Set $\AS=C_{\TSL}(\TSL\cap \KSL)$, to be the centralizer of $\TSL\cap \KSL$ in $\TSL$. It is not difficult to calculate that $\AS=\{(\ddd(a),\zeta)\mid a\in \F^{\times},\:\n|\val(a),\:\zeta\in\mu_n\}$, and see that it is abelian. Observe that $\TSL\cap \KSL\subset \AS$ implies that $\AS$ is a maximal abelian subgroup. Note that $[\TSL: \AS]=[\mathbb{Z}:\n\mathbb{Z}]=\n$.

Let $\UNS$ be the unipotent radical of $\BS$. It follows directly from the Kubota's formula for $\beta$ that $\beta|_{\UNS}$ is trivial, so $\UNS\times \{1\}$ is a subgroup of $\co \SL$. We identify $\UNS$ with $\UNS\times \{1\}$. Under this identification, we have the covering analogue of the Levi decomposition: $\BSL=\TSL\ltimes{\UNS}$.   

Next, we describe a family of compact open subgroups of $\co \SL$. It is proven in~\cite{kubota2} that 
\begin{equation}\label{KSeciso}
\KSL\to \KS\times\mu_n,\quad (\m k,\zeta)\mapsto(\m k,s(\m k)\zeta),\text{ where } s\left(\left(
\begin{array}{cc}
a & b \\
c & d \\
\end{array}
\right)\right)=\begin{cases}(c,d)_n,&0<\val(c)<\infty\\
1,&\text{otherwise.}
\end{cases}
\end{equation}
is an isomorphism. The image of $\KS$ in $\KSL$ under the isomorphism~\eqref{KSeciso} is the subgroup \(
\KSec:=\{(\m k,s(\m k)^{-1})\mid \m k\in \KS\}
\)
of $\KSL$. Consider the compact open congruent subgroups 
\(
\KS_j:=\{\m g\in \KS \mid \m g\equiv \mathrm{I}_{2}\mod \PP^j\}\), for $j\geq 1$, of $\KS$. 
\begin{lem}\label{K_i}
	The central extension~\eqref{centralextension} splits trivially over each of the subgroups $\KS_j$, $j\geq{1}$, $\TS\cap \KS$, and $\BS\cap \KS$.
\end{lem}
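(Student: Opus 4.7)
The plan is to reduce the three claims to showing that Kubota's cocycle $\beta$ vanishes on $H\times H$ for each of the subgroups $H$ in the statement; once this holds, the trivial map $h\mapsto (h,1)$ is a homomorphism and gives a section realizing $\co H\cong H\times \mu_n$ trivially.

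I would dispose of $\BS\cap \KS$ (and hence $\TS\cap \KS\subset \BS\cap \KS$) by a direct calculation. Any element of $\BS\cap \KS$ has the form $\left(\begin{smallmatrix}a&b\\0&d\end{smallmatrix}\right)$ with $a,d\in\R^\times$, so the lower-left entry is zero and the function $X$ from~\eqref{eqbeta} simply returns $d$. Multiplying two such matrices keeps $c=0$ and produces $d$-entry $d_1d_2$, so \eqref{eqbeta} collapses to $\beta(h_1,h_2)=(d_2,d_1)_n$. Since both arguments lie in $\R^\times$, the formula $(a,b)_n=\overline{c}^{(q-1)/n}$ given just after \eqref{centralextension} evaluates to $1$.

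For $\KS_j$ it is cleaner to pass through the splitting map $s$ from~\eqref{KSeciso}. The fact that $\m k\mapsto(\m k,s(\m k)^{-1})$ is a homomorphism $\KS\to \KSL$ unpacks (via the Kubota multiplication) into the cocycle identity $\beta(\m k_1,\m k_2)=s(\m k_1)s(\m k_2)s(\m k_1\m k_2)^{-1}$. Since $\KS_j$ is a subgroup, it therefore suffices to show $s\equiv 1$ on $\KS_j$. For $\m g=\left(\begin{smallmatrix}a&b\\c&d\end{smallmatrix}\right)\in \KS_j$ with $c=0$, this is immediate from the definition of $s$. If $c\neq 0$, then $0<j\leq\val(c)<\infty$ and $d\in 1+\PP^j$, so $s(\m g)=(c,d)_n$, and I would show this Hilbert symbol is $1$ by producing an $n$-th root of $d$: the hypothesis $n\mid q-1$ forces $\gcd(n,p)=1$ (where $p$ is the residue characteristic), so $n\in\R^\times$, and Hensel's lemma applied to $t^n-d$ at the approximate root $t=1$ yields an $n$-th root of $d$ inside $1+\PP^j$. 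The characterization $(a,b)_n=1\:\forall a\iff b\in(\F^\times)^n$ recalled after~\eqref{eqbeta} then gives $s(\m g)=(c,d)_n=1$.

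The main technical step is the last one: upgrading $d\in 1+\PP^j$ to an $n$-th power in $\F^\times$. This is where the standing assumption $n\mid q-1$ is genuinely used, via the coprimality of $n$ with the residue characteristic needed to run Hensel's lemma; without it, the argument collapses. The remainder of the proof is just bookkeeping with the definitions of $\beta$, $X$, and $s$.
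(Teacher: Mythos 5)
Your proof is correct and runs on essentially the same lines as the paper's: both use Hensel's lemma to get $1+\PP\subset(\R^\times)^n$ (hence $d\in(\F^\times)^n$) to show $s\equiv 1$ on $\KS_j$, and both dispose of $\BS\cap\KS$ and $\TS\cap\KS$ by observing that the lower-left entry vanishes there. The only cosmetic difference is that for $\BS\cap\KS$ you compute $\beta(h_1,h_2)=(d_2,d_1)_n=1$ directly from~\eqref{eqbeta}, whereas the paper simply reads $s\equiv 1$ off the ``otherwise'' branch of~\eqref{KSeciso}; the two observations are interchangeable via the coboundary identity $\beta(\m k_1,\m k_2)=s(\m k_1)s(\m k_2)s(\m k_1\m k_2)^{-1}$ that you state.
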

\begin{proof}
Using the Hensel's lemma, it is easy to see that $1+\PP\subset {\R^{\times}}^n$. Then, it follows from~\eqref{KSeciso} and properties of the $n$-th Hilbert symbol that, for all $i\geq 1$, $s|_{\KS_j}$ is trivial. On the other hand, it follows directly from~\eqref{KSeciso} that  $s|_{\TS \cap \KS}$ and $s|_{\BS \cap \KS}$ are trivial.
\end{proof}
We identify $\KS_j\cong \KS_j\times\{1\}$, $j\geq{1}$, ${\BS}\cap \KS\cong{(\BS\cap{\KS})}\times \{1\}$ and $\TS\cap{\KS}\cong{(\TS\cap{\KS})\times\{1\}}$ as subgroups of $\KSL$.

In a similar way, we define the subgroups $\TGL$, $\BGL$  and $\KGL$ of $\co \GL$ to be the inverse images of the standard torus, Borel, and the maximal compact $\KG=\mathrm{GL}(\R)$ subgroups of $\GL$ respectively. The central extension $\co \GL$ does not split over $\TG$. Moreover, $\TGL$ is a Heisenberg group. It is not difficult to see that $Z(\TGL)=\{(\ddd(s,t),\zeta)\mid s,t\in {\F^{\times}}^{n}, \zeta\in \mu_n\}$, and $[\TGL: Z(\TGL)]=n^4$. Moreover, set $\AG=C_{\TGL}(\TGL\cap \KGL)=\{ (\ddd(s,t) ,\zeta )\mid s,t\in \F^{\times}, n| \val(s), n|\val(t), \zeta\in \mu_n\}$. Then, $\AG$ is a maximal abelian subgroup of $\TGL$ and $[\TGL: \AG]=n^2$. In addition, $\beta'|_{\UNG}$ is trivial, where $\UNG$ is the unipotent radical of $\BG$. Hence, we can identify $\UNG$ with $\UNG\times \{1\}$. Under this identification, we have the Levi decomposition: $\BGL=\TGL\ltimes{\UNG}$. It is shown in~\cite{kubota2} that the central extension $\co\GL$ splits over $\KG$. For $j\geq 1$, let $\KG_j$ denote the family of compact open congruent subgroups $\{\m g\in \KG \mid \m g\equiv \mathrm{I}_{2}\mod \PP^j\}$ of $\KG$. Similar to Lemma~\ref{K_i}, one can show that $\co\GL$ splits over $\KG_j$, $\TG\cap \KG$ and $\BG\cap \KG$. 

%======================================

\section{Branching Rules for $\co\SL$} \label{branching-rule-section}
First, we present the construction of the principal series representations of $\co\SL$ following~\cite{mcnamara}. Fix a faithful character $\epsilon : \mu_n\to \mathbb{C}^{\times}$. A representation of $\co \SL$ is genuine if the central subgroup $\mu_n$ acts by $\epsilon$. Such representations do not factor through representations of $\SL$. The construction of principal series representations of $\co \SL$ is based on the essential fact that $\TSL$ is a Heisenberg subgroup, and hence its representations are governed by the Stone-von Neumann theorem, which we state here. See~\cite{mcnamara} for the proof. 
\begin{theo}[Stone-von Neumann]\label{StoneVonNeumman}
	Let $H$ be a Heisenberg group with center $Z(H)$ such that $H/Z(H)$ is finite, and let $\chi$ be a character of $Z(H)$. Suppose that $\ker(\chi)\cap[H,H]=\{1\}$. Then there is a unique (up to isomorphism) irreducible representation $\pi$ of $H$ with central character $\chi$. Let $\mathsf{A}$ be any maximal abelian subgroup of $H$ and let $\chi_0$ be any extension of $\chi$ to $\mathsf{A}$. Then $\pi\cong \Ind_{\mathsf{A}}^{H}\chi_0$.
\end{theo}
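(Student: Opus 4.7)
The plan is to realise the unique irreducible with central character $\chi$ as $\Ind_{\mathsf{A}}^{H}\chi_0$, to establish its irreducibility by Mackey's criterion, and to obtain uniqueness through Frobenius reciprocity. First I would collect the preliminary structural facts. Since every maximal abelian subgroup of $H$ contains the centre, $Z(H)\subseteq \mathsf{A}$, so extending $\chi$ to a character $\chi_0$ of $\mathsf{A}$ is meaningful; such an extension exists because $\mathsf{A}/Z(H)$ is a finite abelian group and characters of subgroups of finite abelian groups always extend. Moreover $\mathsf{A}$ is normal in $H$: for $h\in H$ and $a\in \mathsf{A}$, the commutator $[h,a]=hah^{-1}a^{-1}$ lies in $[H,H]\subseteq Z(H)\subseteq \mathsf{A}$, so $hah^{-1}\in \mathsf{A}$.

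Next I would apply Mackey's irreducibility criterion. Because $\mathsf{A}$ is normal in $H$, the criterion reduces to checking that for every $g\in H\setminus \mathsf{A}$, the characters $\chi_0$ and ${}^{g}\chi_0\colon a\mapsto \chi_0(gag^{-1})$ disagree on $\mathsf{A}$. This is the crux of the argument: for $a\in \mathsf{A}$, writing $gag^{-1}=[g,a]\,a$ with $[g,a]\in[H,H]\subseteq Z(H)$ yields
\[
{}^{g}\chi_0(a)=\chi([g,a])\,\chi_0(a).
\]
If ${}^{g}\chi_0=\chi_0$ on $\mathsf{A}$, then $[g,a]\in \ker(\chi)\cap[H,H]=\{1\}$ for every $a\in \mathsf{A}$, so $g$ centralises $\mathsf{A}$, and maximality of $\mathsf{A}$ forces $g\in \mathsf{A}$, a contradiction.

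For uniqueness, let $\pi$ be any irreducible representation of $H$ with central character $\chi$. A standard cyclic-vector argument, using that $H/Z(H)$ is finite and that $Z(H)$ acts on $\pi$ by the scalar character $\chi$, shows that $\pi$ is finite-dimensional. The restriction $\pi|_{\mathsf{A}}$ is then a semisimple representation of an abelian group, decomposing into characters that all extend $\chi$ from $Z(H)$ to $\mathsf{A}$. Selecting any such extension $\chi_0'$ occurring in $\pi|_{\mathsf{A}}$ and invoking Frobenius reciprocity gives a non-zero $H$-map $\Ind_{\mathsf{A}}^{H}\chi_0'\to \pi$, which must be an isomorphism since both sides are irreducible. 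This simultaneously proves uniqueness and shows that the isomorphism class of $\Ind_{\mathsf{A}}^{H}\chi_0$ is independent of the choice of extension $\chi_0$. The anticipated principal obstacle is the commutator computation in conjunction with the hypothesis $\ker(\chi)\cap[H,H]=\{1\}$, which is precisely what guarantees the non-triviality of the conjugate character outside $\mathsf{A}$; once this is in place, Mackey's criterion and Frobenius reciprocity close the argument routinely.
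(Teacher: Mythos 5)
Your argument is correct and complete. The paper itself does not prove Theorem~\ref{StoneVonNeumman}; it cites McNamara for the proof, so there is no in-paper argument to compare against, but what you write is the standard proof of the Stone--von Neumann theorem in this finite-index setting: normality of $\mathsf{A}$ via $[H,H]\subseteq Z(H)\subseteq\mathsf{A}$, irreducibility of $\Ind_{\mathsf{A}}^{H}\chi_0$ by Mackey's criterion for a normal subgroup together with the hypothesis $\ker(\chi)\cap[H,H]=\{1\}$ and maximality of $\mathsf{A}$, and uniqueness via finite-dimensionality plus Frobenius reciprocity. One cosmetic remark: the extension of $\chi$ to $\chi_0$ on $\mathsf{A}$ exists because $\mathbb{C}^{\times}$ is a divisible (hence injective) abelian group, so characters of a subgroup of an abelian group always extend; the finiteness of $\mathsf{A}/Z(H)$ is not what drives this, although it does guarantee that any such extension is automatically smooth in the topological setting of the paper.
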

Note that $[\TSL: Z(\TSL)]=\n^2<\infty$. Let $\chis$ be a genuine character of $Z(\TSL)$, so that $\chis|_{\mu_n}=\epsilon$. Thus, $\ker(\chis)\cap [\TSL,\TSL]$ is trivial. Hence Theorem~\ref{StoneVonNeumman} applies: genuine irreducible smooth representations $\pis$ of $\TSL$ are classified by genuine smooth characters of $Z(\TSL)$. Moreover, $\mathrm{dim}(\pis)=[\TSL : \co\AS]=\n$.

Let $\chis_{0}$ be a fixed extension of $\chis$ to $\AS$; so that $(\pis, \Ind_{ \AS}^{\TSL}\chis_0)$ is the unique smooth genuine irreducible  representation of $\TSL$ with central character $\chis$. Let us again write $\pis$ for the genuine smooth irreducible representation of $\TSL$, with central character $\chis$, extended trivially over $\UNS$ to a representation of $\BSL=\TSL\ltimes{\UNS}$. Then the genuine principal series representation of $\co\SL$ associated to $\pis$ is $\Ind_{\BSL}^{\co{\SL}}{\pis}$, where $\Ind$ denotes the smooth (non-normalized) induction. In the rest of this section, we decompose $\Res_{\KSL}\Ind_{\BSL}^{\co{\SL}}\pis$ into irreducible constituents. We drop the adjective ``genuine'' for simplicity.

 Define the character 
 \begin{equation}\label{varpichar}
 \varpichar:\F^{\times}\to\mu_n,\quad
 a\mapsto(\varpi, a)_n.
 \end{equation}
 Observe that $\varpichar$ is ramified of degree one. Set $\varpichar_{{\R^{\times}}^2}:=\varpichar|_{{\R^{\times}}^2}$. Observe that a typical element of $\AS$ can be written as $\left(\ddd(a\varpi^{r\n}),\zeta\right)$, and a typical element of $\TSL\cap \KSL$ can be written as $(\ddd(a),\zeta)$, where $a\in\R^{\times}$, $r\in\mathbb{Z}$, and $\zeta\in \mu_n$.
\begin{lem}\label{distinctonAS}
	Let $\pis$ be the unique irreducible representation of $\TSL$ with central character $\chis$. Then 
	\(\Res_{\AS}\pis\cong\bigoplus_{i=0}^{\n-1}\chis_i,\)
	where the $\chis_i$ are  $\n$ distinct characters of $\AS$ defined by
	\[
	\chis_i\left(\ddd(a\varpi^{\n r}),\zeta\right)=\chis_0\left(\ddd(a\varpi^{\n r}),\varpichar^{2i}(a)\zeta\right),
	\]
	for all $a\in \R^{\times}$, $r\in\mathbb{Z}$, $\zeta\in \mu_n$, and $0\leq i< \n$.
\end{lem}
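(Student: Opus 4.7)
The plan is to apply Clifford (Mackey) theory to the Stone--von Neumann realization $\pis\cong\Ind_{\AS}^{\TSL}\chis_{0}$. The first step is to observe that $\AS$ is \emph{normal} in $\TSL$: since $\TSL$ is Heisenberg, the quotient $\TSL/Z(\TSL)$ is abelian, and $\AS\supseteq Z(\TSL)$ means $\AS$ pulls back from a subgroup of this abelian quotient. Combined with $[\TSL:\AS]=\n$, standard theory then yields
\[
\Res_{\AS}\pis \;=\; \Res_{\AS}\Ind_{\AS}^{\TSL}\chis_{0} \;=\; \bigoplus_{g\in\TSL/\AS}\chis_{0}^{g},\qquad \chis_{0}^{g}(h):=\chis_{0}(g^{-1}hg),
\]
a direct sum of $\n$ characters of $\AS$, each an extension of $\chis$.

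For the second step I would take the transversal $\{\done{\varpi^{i}}:0\le i<\n\}$ for $\TSL/\AS$ and compute the conjugates explicitly. From~\eqref{eqbeta} the cocycle on the split torus is $\beta(\ddd(s),\ddd(t))=(t,s)_n$, and a short manipulation with the Hilbert symbol identities gives the commutator relation $[\done{t},\done{x}]=(x,t)_n^{2}$. Consequently the conjugation $\done{\varpi^{i}}^{-1}(\ddd(a\varpi^{\n r}),\zeta)\done{\varpi^{i}}$ multiplies the $\mu_n$-component by $(a\varpi^{\n r},\varpi^{i})_n^{-2}$. Bilinearity of the Hilbert symbol splits this into $(a,\varpi^{i})_n^{-2}\cdot(\varpi^{\n r},\varpi^{i})_n^{-2}$; the second factor is trivial because $(\varpi^{\n r},\varpi^{i})_n=\overline{(-1)^{\n r i}}^{(q-1)/n}$ is a square root of unity in $\mu_n$, so its square vanishes. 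The surviving factor is $(a,\varpi^{i})_n^{-2}=(\varpi^{i},a)_n^{2}=(\varpi,a)_n^{2i}=\varpichar^{2i}(a)$, matching the prescription for $\chis_{i}$ exactly.

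To finish, I would verify pairwise distinctness. The ratio $\chis_{i}\chis_{j}^{-1}$, evaluated on an element $(\ddd(a),\zeta)\in\TSL\cap\KSL$ with $a\in\R^{\times}$, equals $\epsilon\circ\varpichar^{2(i-j)}(a)$. From $\varpichar|_{\R^{\times}}(a)=\overline{a}^{-(q-1)/n}$ one reads off that $\varpichar|_{\R^{\times}}$ has exact order $n$, so $\varpichar^{2}|_{\R^{\times}}$ has exact order $\n$; together with the faithfulness of $\epsilon$ this forces $\chis_{i}=\chis_{j}$ only when $\n\mid(i-j)$, i.e.\ $i=j$ in the range $\{0,\dots,\n-1\}$. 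The principal obstacle is the cocycle bookkeeping in the middle step -- establishing the commutator identity in $\TSL$ and cleanly isolating the dependence on $a\in\R^{\times}$ from the (vanishing) dependence on $\varpi^{\n r}$ -- rather than the surrounding formal manipulations with Mackey theory and character orders.
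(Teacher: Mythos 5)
Your proposal is correct and follows essentially the same route as the paper: Mackey decomposition of $\Res_{\AS}\Ind_{\AS}^{\TSL}\chis_{0}$ using the transversal $\{\done{\varpi^{i}}\}_{0\le i<\n}$, a Hilbert-symbol computation of the conjugates (the paper does this by direct cocycle manipulation, you via the commutator pairing $[\done{t},\done{x}]=(x,t)_n^{2}$ — the same calculation packaged differently), and distinctness from the exact order $\n$ of $\varpichar^{2}|_{\R^{\times}}$. The one small added observation, that $\AS$ is normal because it contains $Z(\TSL)$ and $\TSL/Z(\TSL)$ is abelian, is a clean way to justify why all the Mackey intersections collapse, in place of the paper's remark that $\AS$ is stable under conjugation by the transversal.
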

\begin{proof}
	By Theorem~\ref{StoneVonNeumman}, $\pis\cong \Ind_{\AS}^{\TSL}\chis_0$. By Mackey's theory,
	\(
	\Res_{\AS}\Ind_{\AS}^{\TSL}\chis_0=\bigoplus_{s\in S_{\n}}\Ind_{\AS\cap ^s\!\AS}^{\AS}{\chis_0}^s,
	\)
	where $S_{\n}$ is a complete set of coset representatives for $\AS\backslash\TSL/\AS$. It is not difficult to see that we can choose
	\(
	S_{\n}=\{\left(\ddd(\varpi^i),1\right)\:|\:0\leq i<\n\}.
	\)
	Since $\AS$ is stable under conjugation by $S_{\n}$, $\Ind_{\AS\cap ^s\!\AS}^{\AS}{\chis_0}^s={\chis_0}^s$. Let $\left(\ddd(a\varpi^{r\n}),\zeta\right)\in \AS$, and $s=\left(\ddd(\varpi^i),1\right)\in S_{\n}$. Then
\begin{gather*}
s^{-1}\left(\ddd(a\varpi^{r\n}),\zeta\right)s=\left(\ddd(\varpi^{-i}),(\varpi^i,\varpi^i)_n\right)\left(\ddd(a\varpi^{r\n}),\zeta\right)\left(\ddd(\varpi^i),1\right)\\
=
\left(\ddd(a\varpi^{r\n-i}),(a\varpi^{r\n},\varpi^{-i})_n(\varpi^i,\varpi^i)_n\zeta\right)\left(\ddd(\varpi^i),1\right)
=
\left(\ddd(a\varpi^{r\n}),(\varpi^i,a\varpi^{r\n-i})_n(a\varpi^{r\n},\varpi^{-i})_n(\varpi^i,\varpi^i)_n\zeta\right)\\
=
\left(\ddd(a\varpi^{r\n}),(\varpi,a)_n^{2i}\zeta\right)=\left(\ddd(a\varpi^{r\n}),\varpichar^{2i}(a)\zeta\right).
\end{gather*}
Hence, ${\chis_0}^s\left(\left(\ddd(a\varpi^{r\n}),\zeta\right)\right)=\chis_0\left(\left(\ddd(a\varpi^{r\n}),\varpichar^{2i}(a)\zeta\right)\right)$. Denote this character $\chis_i$. To show that the $\chis_i$, $0\leq i<\n$, are distinct, it is enough to show that $\varpichar^{2i}|_{\R^{\times}}=1$ if and only if $i=0$. Observe that \(\varpichar^{2i}(a)=\overline{a^{-1}}^{\frac{(q-1)2i}{n}}\),
	which is equal to $1$ for all $a\in \R^{\times}$ if and only if $n|2i$. The result follows.
\end{proof}
The characters $\chis_i$ defined in Lemma~\ref{distinctonAS} are clearly distinct when restricted to $\TSL\cap\KSL$ and, again writing $\chis_i$ for these restrictions,  
\begin{eqnarray}\label{TSLcapK}
\Res_{\TSL\cap\KSL}\pis=\bigoplus_{i=0}^{\n-1}\chis_i.
\end{eqnarray}
\begin{prop}\label{ress}
	Let $\chis_i$, $0\leq i< \n$, denote also the trivial extension of the characters in~\eqref{TSLcapK} to $\BSL\cap\KSL$. Then
	$$
	\ress\cong\bigoplus_{i=0}^{\n-1}{\Ind_{\BSL\cap\KSL}^{\KSL}{\chis_i}}.
	$$
	\begin{proof}
		By Mackey's theorem, we have
		\(
		\ress\cong\bigoplus_{x\in{X}}\Ind_{\BSL ^{x^{-1}}\!\!\!\!\!\cap{\KSL}}^{\KSL}\Res_{\BSL^{ x^{-1}}\!\!\!\!\!\cap{\KSL}}\pis^x,
		\)
		where $X$ is a complete set of double coset representatives of $\KSL$ and $\BSL$ in $\co{\SL}$. The Iwasawa decomposition $\KSL\BSL=\co{\SL}$ implies that $X=\{(\mathrm{I}_2,1)\}$ and hence
		\(
		\ress=\Ind_{\BSL\cap{\KSL}}^{\KSL}\Res_{\BSL\cap{\KSL}}\pis.
		\)
		The result follows from~\eqref{TSLcapK}.
	\end{proof}
\end{prop}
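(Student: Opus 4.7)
The plan is to apply the Mackey restriction-induction formula to $\Res_{\KSL}\Ind_{\BSL}^{\co\SL}\pis$. This gives
\[
\Res_{\KSL}\Ind_{\BSL}^{\co\SL}\pis \;\cong\; \bigoplus_{x\in X} \Ind_{\BSL^{x^{-1}}\cap \KSL}^{\KSL} \Res_{\BSL^{x^{-1}}\cap\KSL}\pis^{x},
\]
where $X$ is a complete set of representatives for the double cosets $\KSL\backslash \co\SL/\BSL$. So my first job is to identify $X$.

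To do this, I would lift the Iwasawa decomposition $\SL = \KS\BS$ to the cover: since $\mu_n$ is central and lies in both $\KSL$ and $\BSL$, the projection $\mathsf{p}$ induces a bijection between $\KSL\backslash\co\SL/\BSL$ and $\KS\backslash\SL/\BS$, and the latter is a single double coset. Hence $X$ may be taken to consist of the single element $(\mathrm{I}_2,1)$, and the Mackey sum collapses to
\[
\Res_{\KSL}\Ind_{\BSL}^{\co\SL}\pis \;\cong\; \Ind_{\BSL\cap\KSL}^{\KSL} \Res_{\BSL\cap\KSL}\pis.
\]

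Next I would compute $\Res_{\BSL\cap\KSL}\pis$. Since $\pis$ was extended trivially along $\UNS$ in passing from $\TSL$ to $\BSL$, and the Levi-type decomposition $\BSL\cap\KSL = (\TSL\cap\KSL)\ltimes(\UNS\cap\KS)$ holds (because $\UNS$ splits trivially and Lemma~\ref{K_i} gives the trivial splitting over $\TS\cap\KS$), the restriction of $\pis$ to $\BSL\cap\KSL$ is simply the trivial extension of $\Res_{\TSL\cap\KSL}\pis$ from $\TSL\cap\KSL$ to $\BSL\cap\KSL$. Invoking~\eqref{TSLcapK}, this equals $\bigoplus_{i=0}^{\n-1}\chis_i$, where the $\chis_i$ are now viewed as characters of $\BSL\cap\KSL$ via the stated trivial extension. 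Distributing induction over the direct sum then yields the desired formula.

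The only step that requires care is the identification of double cosets, i.e.\ verifying that the Iwasawa decomposition descends cleanly in the cover so that $X$ is a single coset; once this is in place, the rest is an application of Mackey and a bookkeeping of the Levi-type decomposition of $\BSL\cap\KSL$. I do not expect any genuinely hard calculation beyond what has already been done in Lemma~\ref{distinctonAS} and Lemma~\ref{K_i}.
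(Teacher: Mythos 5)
Your argument is correct and follows exactly the same route as the paper's proof: apply Mackey's restriction–induction formula, use the Iwasawa decomposition (lifted to the cover) to collapse the double-coset sum to a single term, and then invoke~\eqref{TSLcapK}. The extra paragraph you add about the Levi-type decomposition of $\BSL\cap\KSL$ is a small elaboration of the phrase ``the result follows from~\eqref{TSLcapK}'' but does not change the method.
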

Hence, in order to calculate the $\mathsf{K}$-types, it is enough to decompose each ${\Ind_{\BSL\cap\KSL}^{\KSL}{\chis_i}}$, $0\leq i< \n$, into irreducible representations. Note that the induction space $\Ind_{\BSL\cap\KSL}^{\KSL}\chis_i$ is smooth and admissible. 
Fix $i\in\{0,\cdots,\n-1\}$. The smoothness of $ \Ind_{\BSL\cap\KSL}^{\KSL}\chis_i$ implies that
\(
\Ind_{\BSL\cap\KSL}^{\KSL}\chis_i=\bigcup_{l\geq 1}\big(\Ind_{\BSL\cap\KSL}^{\KSL}\chis_i\big)^{\KS_l}.
\)
Note that, by admissibility, $\big(\Ind_{\BSL\cap\KSL}^{\KSL}\chis_i\big)^{\KS_l}$ is finite-dimensional for every $l\geq{1}$ and since $\KS_l$ is normal in $\KSL$, it is $\KSL$-invariant. Hence, to decompose $\Ind_{\BSL\cap\KSL}^{\KSL}\chis_i$ into irreducible constituents, it is enough to decompose each $\big(\Ind_{\BSL\cap\KSL}^{\KSL}\chis_i\big)^{\KS_l}$ into irreducible constituents.

	For any character $\gamma$ of any subgroup $D$ of $\TSL$, we say $\gamma$ is {primitive mod $m$} if $m$ is the smallest strictly positive integer for which $\Res_{D\cap{\KS_{m}}}\gamma=1$. From now on, let $m\geq 1$ be a positive integer such that $\chis$ is primitive mod $m$. Because $1+\PP\subset {\F^{\times}}^n$, $Z(\TSL)\cap{\KS_{m}}=\TSL\cap \KS_m$, for all $m\geq 1$. Note that since $\chis_i|_{Z(\TSL)}=\chis$, $\chis_i|_{\TSL\cap \KS_m}=\chis|_{Z(\TSL)\cap \KS_m}$. Hence, $\chis$ is primitive mod $m$ if and only if the $\chis_i$ for $0\leq i< \n $ are primitive mod $m$. Set $\BSL_l:=(\BSL\cap\KSL)\KS_l$.
\begin{lem}\label{keylammaminor}
	For every $0\leq i <\n$,
	\[
	\big(\Ind_{\BSL\cap{\KSL}}^{\KSL}\chis_i\big)^{{\KS_l}}=\begin{cases}
	\{0\}, &  0<l<{m}\\
	\Ind_{\BSL_l}^{\KSL}\chis_i, & \text{otherwise}.
	\end{cases}	
	\] 
\end{lem}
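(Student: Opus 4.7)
The plan is to realize $\Ind_{\BSL\cap\KSL}^{\KSL}\chis_i$ as the space of functions $f\colon\KSL\to\mathbb{C}$ satisfying $f(bk)=\chis_i(b)f(k)$ for all $b\in\BSL\cap\KSL$ and $k\in\KSL$, and to analyse the $\KS_l$-fixed subspace, consisting of those $f$ additionally obeying $f(kx)=f(k)$ for $x\in\KS_l$. The pivot of the argument is that $\KS_l$ is normal in $\KSL$: Lemma~\ref{K_i} combined with the isomorphism~\eqref{KSeciso} identifies $\KS_l$ with $\KS_l\times\{1\}$ inside $\KS\times\mu_n\cong\KSL$, and normality of $\KS_l$ in $\KS$ then transfers. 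Consequently $\BSL_l=(\BSL\cap\KSL)\KS_l$ is indeed a subgroup of $\KSL$, and the whole problem collapses to the question whether $\chis_i$ is trivial on $(\BSL\cap\KSL)\cap\KS_l$.

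I would next compute this intersection using Lemma~\ref{K_i} and the identifications made immediately after it: under the trivial splittings it coincides with $(\BS\cap\KS_l)\times\{1\}\subseteq\KSL$, and factors as $(\TS\cap\KS_l)\ltimes(\UNS\cap\KS_l)$. Since $\chis_i$ was extended trivially over the unipotent radical, triviality on the intersection is equivalent to triviality of $\chis_i|_{\TS\cap\KS_l}$. By the remark preceding the lemma statement (which uses $1+\PP\subseteq{\F^\times}^n$ to get $Z(\TSL)\cap\KS_m=\TSL\cap\KS_m$ and then transfers primitivity from $\chis$ to each $\chis_i$), this triviality holds iff $l\geq m$.

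Both cases now follow by standard Mackey-style bookkeeping. For $0<l<m$, pick $b\in(\BSL\cap\KSL)\cap\KS_l$ with $\chis_i(b)\neq 1$; for any fixed vector $f$ and any $k\in\KSL$, write $bk=k\cdot(k^{-1}bk)$ with $k^{-1}bk\in\KS_l$ by normality, so $f(bk)=f(k)$. Left-equivariance then forces $\chis_i(b)f(k)=f(k)$, hence $f(k)=0$ for all $k$. For $l\geq m$, extend $\chis_i$ to a character of $\BSL_l$ by declaring it trivial on $\KS_l$; well-definedness uses triviality on the intersection, and the homomorphism property follows because $\BSL\cap\KSL$ normalises $\KS_l$. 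The identification $(\Ind_{\BSL\cap\KSL}^{\KSL}\chis_i)^{\KS_l}=\Ind_{\BSL_l}^{\KSL}\chis_i$ then reduces to two inclusions: for $\supseteq$, any $f\in\Ind_{\BSL_l}^{\KSL}\chis_i$ is automatically right $\KS_l$-invariant upon rewriting $kx=(kxk^{-1})k$; for $\subseteq$, a $\KS_l$-fixed $f$ satisfies $f(bxk)=\chis_i(b)f(xk)=\chis_i(b)f(k)$, after observing that $xk=k(k^{-1}xk)$ with $k^{-1}xk\in\KS_l$ and applying right $\KS_l$-invariance.

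The one delicate point is to ensure that $\chis_i$ really extends to a character of $\BSL_l$; this is precisely where the primitivity hypothesis $l\geq m$ and the normality of $\KS_l$ inside $\BSL_l$ are both required. Everything else is routine manipulation of the functional equations of induced representations.
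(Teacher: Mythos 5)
Your proposal is correct and follows essentially the same strategy as the paper's proof: for $0<l<m$, normality of $\KS_l$ in $\KSL$ together with the existence of $b\in\BSL\cap\KS_l$ with $\chis_i(b)\neq 1$ forces any $\KS_l$-fixed vector to vanish, and for $l\geq m$ the fixed space is identified with the induction from $\BSL_l$ by extending $\chis_i$ trivially over $\KS_l$. You usefully fill in two steps the paper treats as routine, namely why the condition ``$\chis_i$ trivial on $\BSL\cap\KS_l$'' is exactly $l\geq m$ (via the factorization $\BS\cap\KS_l=(\TS\cap\KS_l)(\UNS\cap\KS_l)$ and the trivial extension over $\UNS$) and why the two induction spaces genuinely coincide when $l\geq m$ (the rewrites $kx=(kxk^{-1})k$ and $xk=k(k^{-1}xk)$); but the underlying argument is the one the paper uses.
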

\begin{proof}
Suppose $0<l<{m}$, and that $f$ is a vector in $	\big(\Ind_{\BSL\cap{\KSL}}^{\KSL}\chis_i\big)^{{\KS_l}}$. Because $\chi_i\big|_{\BSL\cap{\KS_l}}\neq{1}$ for $l<m$, we can choose $\cm b\in{\BSL\cap{\KS_l}}$ such that $\chi_i(\cm b)\neq{1}$. Let $\cm g\in{\KSL}$.  Note that $\KS_l$ is normal in $\KSL$ and hence $\cm g^{-1}\cm b\cm g\in{\KS_l}$. On the one hand, $f(\cm b\cm g)=\chis_i(\cm b)f(\cm g)$; on the other hand,
	\(
	f(\cm b\cm g)=f(\cm g\cm g^{-1}\cm b\cm g)=\left(\cm g^{-1}\cm b\cm g\right)\cdot f(\cm g)=f(\cm g),
	\)
	since $f$ is fixed by $\KS_l$. It follows that $\chis_i(\cm b)f(\cm g)=f(\cm g)$. Our choice of $\cm b$ implies that $f(\cm g)=0$ and because $\cm g$ is arbitrary, $f=0$. However, if $l\geq m$ then $\chis_i|_{\KS_l}=0$ and because $\KS_l$ is normal in $\KSL$, it is not difficult to see that every $\KS_l$-fixed vector $f$ translates on the left by $\BSL_l$ and vice-versa. Hence the result follows. 
\end{proof}
Lemma~\ref{keylammaminor} tells us that, in order to decompose $(\Ind_{\BSL\cap \KSL}^{\KSL}\chis_i)^{\KS_l}$ into irreducible constituents, it is enough to decompose $\Ind_{\BSL_l}^{\KSL}\chis_i$. Hence, we are interested in counting the dimension of \(\Hom_{\KSL} (\Ind_{\BSL_l}^{\KSL}\chis_i,\Ind_{\BSL_l}^{\KSL}\chis_i).\)
By Frobenius reciprocity, this latter space is isomorphic to $\Hom_{\BSL_l}(\Res_{\BSL_l}\Ind_{\BSL_l}^{\KSL}\chis_i، ,\chi_i)$. It follows from Mackey's theory that 
\[
\Res_{\BSL_l}\Ind_{\BSL_l}^{\KSL}\chis_i\cong \bigoplus_{\cm x\in S}\Ind_{\BSL_l^{\cm x^{-1}}\!\!\!\!\!\cap \BSL_l}^{\BSL_l}\chis_i^{\cm x},
\] 
where $S$ is a set of double coset representatives of $\BSL_l\backslash \KSL/\BSL_l$. The set $S$ is a lift to the covering group $\KSL$ of a similar set of double coset representatives calculated in~\cite{monica}. Using the latter set, and because $\mu_n\subset \BSL_l$, it is easy to see that 
\begin{equation}\label{setofdoublecoset}
S=\{(\mathrm{I}_2,1),\co w, \co\lt( x\varpi^r)\mid x\in\{1,\varepsilon\}, 1\leq{r}<l\},  
\end{equation}
 where $\varepsilon$ is a fixed non-square. For $0\leq i,j<\n$, let $\mathcal{H}_{i,j}$ be the Hecke algebra \[\mathcal{H}_{i,j}:=\mathcal{H}(\BSL_l\backslash{\KSL}/\BSL_l,\:\chis_i,\chis_j)=\{f:\KSL\to\mathbb{C}\:|\:f(lgh)=\chis_i(l)f(g)\chis_j(h), \:l,h\in \BSL_l, g\in \KSL\}.\] 
\begin{prop}\label{hecke}
Let $0\leq i,j<\n$. Then $\dim \Hom_{\KSL} (\Ind_{\BSL_l}^{\KSL}\chis_i,\Ind_{\BSL_l}^{\KSL}\chis_j)=\dim \mathcal{H}_{i,j}$.	
\end{prop}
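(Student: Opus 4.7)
The plan is to match explicit bases on the two sides, both indexed by a ``compatibility-satisfying'' subset of the double coset representatives $S$ from \eqref{setofdoublecoset}. The argument is purely formal: Frobenius reciprocity and Mackey's decomposition on the left-hand side, and a delta-function basis of the Hecke algebra on the right-hand side.

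First I would apply Frobenius reciprocity to obtain
\[
\Hom_{\KSL}\bigl(\Ind_{\BSL_l}^{\KSL}\chis_i, \Ind_{\BSL_l}^{\KSL}\chis_j\bigr) \cong \Hom_{\BSL_l}\bigl(\chis_i, \Res_{\BSL_l}\Ind_{\BSL_l}^{\KSL}\chis_j\bigr),
\]
and then invoke the Mackey decomposition recalled in the display just before the proposition (with $i,j$ swapped), followed by Frobenius reciprocity on each summand. The outcome is a direct sum over $\cm x \in S$ of $\Hom_{\BSL_l \cap \BSL_l^{\cm x^{-1}}}(\chis_i, \chis_j^{\cm x})$. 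Each summand compares two characters of an abelian group, so it is $1$-dimensional precisely when $\chis_i$ and $\chis_j^{\cm x}$ agree on the intersection $\BSL_l\cap\BSL_l^{\cm x^{-1}}$, and $0$ otherwise.

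In parallel, I would build a natural basis of $\mathcal{H}_{i,j}$ from functions supported on single double cosets. For each $\cm x \in S$, attempt to define $f_{\cm x}$ with support $\BSL_l\cm x\BSL_l$ by $f_{\cm x}(l\cm x h) := \chis_i(l)\chis_j(h)$ for $l,h\in \BSL_l$. The transformation law in the definition of $\mathcal{H}_{i,j}$ is then automatic; the only nontrivial point is well-definedness of the value at $\cm x$. If $l_1 \cm x h_1 = l_2 \cm x h_2$, a direct rearrangement places $b := l_2^{-1}l_1$ into $\BSL_l \cap \BSL_l^{\cm x^{-1}}$, and the required identity $\chis_i(b) = \chis_j(\cm x^{-1} b \cm x)$ is literally the same compatibility condition between $\chis_i$ and $\chis_j^{\cm x}$ as above. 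Since the surviving $f_{\cm x}$ have pairwise disjoint supports they are linearly independent, and they span because every element of $\mathcal{H}_{i,j}$ is the sum of its restrictions to the finitely many double cosets indexed by $S$.

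Combining these, both sides are equal to $\#\{\cm x \in S : \chis_i \text{ and } \chis_j^{\cm x} \text{ agree on } \BSL_l\cap\BSL_l^{\cm x^{-1}}\}$, which proves the claim. I expect no conceptual obstacle beyond standard Mackey theory; the only delicate point is keeping the two conjugation conventions $X^Y = Y^{-1}XY$ and ${}^YX = YXY^{-1}$ consistent, so that the compatibility condition coming from Mackey--Frobenius on the Hom side is literally the well-definedness condition for $f_{\cm x}$ on the Hecke side.
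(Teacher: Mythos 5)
Your proposal is correct and follows essentially the same route as the paper: Mackey's decomposition and Frobenius reciprocity identify the Hom space with a sum of one-dimensional Hom spaces indexed by $S$, and on the other side a delta-function basis of $\mathcal{H}_{i,j}$ is indexed by exactly those $\cm x\in S$ for which the well-definedness condition holds, and that condition is verified to match. One small slip: you say each summand compares two characters ``of an abelian group,'' but $\BSL_l\cap\BSL_l^{\cm x^{-1}}$ need not be abelian (it contains unipotent elements); however this is immaterial, since a Hom space between two one-dimensional characters of any group is one-dimensional exactly when they coincide and zero otherwise, so the counting argument goes through unchanged.
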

\begin{proof}
On the one hand, observe that $\Hom_{\KSL} (\Ind_{\BSL_l}^{\KSL}\chis_i,\Ind_{\BSL_l}^{\KSL}\chis_j)=\bigoplus_{\cm x\in S}\Hom_{\BSL_l}(\Ind_{\BSL_l^{\cm x^{-1}}\!\!\!\!\!\cap \BSL_l}^{\BSL_l}\chis_i^{\cm x}, \chis_j)$, which by Frobenius reciprocity is equal to
$\bigoplus_{\cm x\in S}\Hom_{\BSL_l^{\cm x^{-1}}\!\!\!\!\!\cap \BSL_l}(\chis_i^\cm x,\chis_j)$. Let $S_{i,j}$ be the set of all $\cm x\in S$ such that $\chis_i(g)=\chis_j(h)$, whenever $h,g\in  \BSL_l$ and $\cm xg\cm x^{-1}=h$. Then $\dim \Hom_{\KSL} (\Ind_{\BSL_l}^{\KSL}\chis_i,\Ind_{\BSL_l}^{\KSL}\chis_j)=|S_{i,j}|$. On the other hand, observe that for every $\cm x\in S$, there exists a function $f\in \mathcal{H}_{i,j}$ with support on the double coset represented by $\cm x$ if and only if $h=\cm xg\cm x^{-1}$ implies $\chis_i(g)=\chis_j(h)$ for all $h,g\in  \BSL_l$. Moreover, the basis of $\mathcal{H}_{i,j}$ is parametrized by such double coset representatives. Hence, $\dim \mathcal{H}_{i,j}=|S_{i,j}|$.
\end{proof}	
Hence, in order to decompose $(\Ind_{\BSL\cap \KSL}^{\KSL}\chis_i)^{\KS_l}$, we are interested in counting the dimension of $\mathcal{H}_{i,i}$. Set $(\TS \cap \KS)^2:=\{\ddd(t^2)\mid t\in \R^{\times}\}$, $\TS_l:=\{\done{t}\mid t\in {\R^{\times}(1+\PP^l})\}$, and $(\TS_l)^2:=\{\done{t^2}\mid t\in {\R^{\times}(1+\PP^l})\}$. It is not difficult to see that $\TS_{l}$ and $(\TS_l)^2$ are subgroups of $(\TSL\cap\KSL)\KS_l$.
% I cut the stuff from here

\begin{prop}\label{heckedim}
	Let $l\geq m$ and $0\leq i<\n$. Then 
	\(
	\mathrm{dim}\mathcal{H}_{i,i}=
	\begin{cases}
	1+2(l-m),&\text{if}\quad { {\chis_i}|_{(\TS\cap \KS)^2}\neq{1}};\\
	2l,&\text{otherwise.}
	\end{cases}
	\)
\end{prop}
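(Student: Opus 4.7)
The plan is to enumerate the double coset representatives in $S$ from~\eqref{setofdoublecoset} and decide, for each, whether it lies in the subset $S_{i,i}$ defined in the proof of Proposition~\ref{hecke}; that proof gives $\dim \mathcal{H}_{i,i} = |S_{i,i}|$. Since $|S| = 1 + 1 + 2(l-1) = 2l$, I would split the analysis into three cases: the identity, the Weyl element $\co w$, and the lower-triangular family $\co\lt(x\varpi^r)$ with $x \in \{1,\varepsilon\}$ and $1 \le r < l$. The identity always contributes. For every other $\cm x$, using $\BSL_l = (\BSL \cap \KSL)\KS_l$ together with the triviality of $\chis_i$ on the unipotent radical of $\BSL_l$ and on $\BSL_l \cap \KS_l$ (the latter via primitivity of $\chis_i$ mod $m$ and $l \ge m$), the compatibility $\chis_i(g) = \chis_i(\cm x g \cm x^{-1})$ reduces to a statement supported on torus-type elements whose conjugate lies in $\BSL_l$.

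For $\cm x = \co w$, a direct computation using $w\ddd(t)w^{-1} = \ddd(t^{-1})$ and evaluating the $\beta$-cocycle factors via~\eqref{eqbeta}---all of which simplify because $(-1,t)_n = 1$ and $(t,t)_n = 1$ for $t \in \R^\times$---reduces the compatibility to $\chis_i(\done{t^2}) = 1$ for every $t \in \R^\times$, which is exactly $\chis_i|_{(\TS \cap \KS)^2} = 1$. Hence $\co w$ contributes precisely when this equation holds.

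For $\cm x = \co\lt(x\varpi^r)$ with $y = x\varpi^r$, the key matrix identity
\[
\lt(y)\ddd(t)\lt(-y) = \left(\begin{smallmatrix} t & 0 \\ y(t-t^{-1}) & t^{-1} \end{smallmatrix}\right)
\]
shows that the relevant conjugate lies in $\BSL_l$ iff $y(t-t^{-1}) \in \PP^l$, i.e.\ $t^2 \in 1 + \PP^{l-r}$, which for odd residue characteristic means $t \in \{\pm 1\}(1 + \PP^{l-r})$. I would then decompose the conjugate through $(\BSL \cap \KSL)\KS_l$ and extract its $\chis_i$-value by combining the $\beta$-factor from the conjugation with the splitting cocycle $s$ of~\eqref{KSeciso}. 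The resulting compatibility translates into a condition on the values of $\chis_i$ on a congruence neighborhood of the squares of $\TS \cap \KS$. When the congruence depth is at least $m$, equivalently $r \le l - m$, primitivity of $\chis_i$ makes the condition automatic; when $r > l - m$, the condition collapses to the same equation $\chis_i|_{(\TS \cap \KS)^2} = 1$ governing $\co w$. This yields $l - m$ unconditional values of $r$, and $m - 1$ conditional values, each doubled by $x \in \{1, \varepsilon\}$.

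Summing the contributions gives $\dim\mathcal{H}_{i,i} = 1 + 2(l-m)$ in general, and $\dim\mathcal{H}_{i,i} = 1 + 2(l-m) + 2(m-1) + 1 = 2l$ in the exceptional case, as claimed. The main technical obstacle is the cocycle bookkeeping in the lower-triangular case: the conjugation produces a cascade of Hilbert-symbol factors from $\beta$ and from $s$, which must be collapsed---using bilinearity, $(a,-a)_n = 1$, the Steinberg relation $(a, 1-a)_n = 1$, and the triviality of $(\R^\times, 1+\PP^k)_n$---into the clean square condition above. A secondary subtlety is verifying that the non-square coset representative $\varepsilon$ contributes identically to $1$; this hinges on the observation that $(1+\PP^{l-r})^2 = 1+\PP^{l-r}$ for odd residue characteristic, so the compatibility is blind to the residue class of $y$ modulo squares.
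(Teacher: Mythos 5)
Your treatment of the lower-triangular double cosets $\co\lt(x\varpi^r)$ is where the argument breaks down. You test the Hecke compatibility only against diagonal elements $\ddd(t)$. But the conjugate $\lt(y)\ddd(t)\lt(y)^{-1}$ carries the \emph{same} diagonal $(t,t^{-1})$ as $\ddd(t)$; modulo the cocycle corrections the character compatibility is vacuous on these elements, and the corrections produce a condition on the fixed character $\epsilon$ rather than on $\chis_i$, so your step ``the resulting compatibility translates into a condition on the values of $\chis_i$ on a congruence neighborhood of the squares'' does not actually materialize from the torus. The real constraint comes from conjugating genuine upper-triangular elements $\m g=\left(\begin{smallmatrix}p&q\\0&p^{-1}\end{smallmatrix}\right)$ with $q\ne 0$: the $(1,1)$-entry of $\lt(y)^{-1}\m g\lt(y)$ is $p+qy$, so the torus part shifts from $\ddd(p)$ to $\ddd(p+qy)$. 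Imposing $\lt(y)^{-1}\m g\lt(y)\in\BSL_l$ gives $pyq\equiv 1-p^2\pmod{\PP^{l-r}}$, and as $(p,q)$ ranges over all admissible pairs the ratio $(p+qy)/p$ sweeps out all of $1+\PP^r$; hence $\co\lt(x\varpi^r)$ supports a Hecke operator iff $\chis_i(\done{v})=1$ for every $v\in 1+\PP^r$, i.e.\ iff $r\ge m$. Your criterion $r\le l-m$ is therefore wrong: with $m=2$, $l=3$, $r=1$ it would declare $\co\lt(x\varpi)$ supporting, but it is not.

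Your final dimension count still comes out right, but only by a combinatorial coincidence: the two sets $\{m,\dots,l-1\}$ and $\{1,\dots,l-m\}$ have the same cardinality $l-m$, and your sum $1+2(l-m)+2(m-1)+1$ is algebraically $2l$ for every $m$. The paper instead obtains $2l$ in the exceptional case by proving, inside the $\co w$ analysis, that $\chis_i|_{(\TS\cap \KS)^2}=1$ forces $m=1$ (via Hensel's lemma applied to $1+\PP\subset{\R^\times}^2$); you do not observe this, so the ``conditional'' range $l-m<r<l$ you introduce is never genuinely activated. The identity coset and the $\co w$ coset are handled correctly, and the overall strategy of classifying supporting double cosets via Proposition~\ref{hecke} matches the paper; the gap lies entirely in the lower-triangular family, where conjugating torus elements is not enough and one must conjugate full Borel elements to extract the $1+\PP^r$ congruence giving $r\ge m$.
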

\begin{proof}
	Assume $l\geq m$. Note that $f(\cm b\cm k\cm b')=\chis_i(\cm b)f(\cm k){\chis}_i(\cm b')$ for all $f\in\mathcal{H}_{i,i}$, $\cm b, \cm b'\in \BSL_l$ and $\cm k\in \KSL$. Hence, for every double coset representative $\cm x$ in~\eqref{setofdoublecoset}, there exists a function $f\in\mathcal{H}_{i,i}$, with support on the double coset represented by $\cm x$ if and only if $\cm b\cm x\cm b'=\cm x$ implies that $\chis_i(\cm b\cm b')=1$ for all $\cm b,\cm b'\in \BSL_l$. The set of such double cosets parameterizes a basis for $\mathcal{H}_{i,i}$. We now determine these double cosets. Let
	\(
	\cm b=(\m b,\zeta)=\left(\left(\begin{smallmatrix}t&s\\
	0& t^{-1}\end{smallmatrix}\right),\zeta\right)\), and \(\cm b'=(\m b',\zeta')=\left(\left(\begin{smallmatrix}t'&s'\\
	0& t'^{-1}\end{smallmatrix}\right),\zeta'\right),  
	\)
	where $t,t'\in {\mathcal{O}^{\times}}({1+\PP^l})$, $s,s'\in  \PP^l$ and $\zeta,\zeta'\in\mu_n$ denote arbitrary elements of $\BSL_l$.
	\begin{description}
		\item [The identity coset $\BSL_l$] A function $f\in{\mathcal{H}_{i,i}}$ has support on $\BSL_l$ if and only if $f(\cm b)=\chis_i(\cm b),\:\forall{\cm b\in{\BSL_l}}$. So there is always a function with support on the identity coset, namely $f=\chis_i$.
		
		\item[\textbf{The coset of $\w$}] For $\cm b$ and $\cm b'$ in $\BSL_l$,
		\(
		\cm b\w\cm b'=\w
		\)
		implies, via a quick calculation, that $\m b=\m b'=\ddd(t)$, for some $t\in{\mathcal{O}^{\times}}({1+\PP^l})$ and $\zeta'=\zeta^{-1}$. Therefore, 
		\(
		\chis_i(\cm b\cm b')={\chis}_i\left((\ddd(t),\zeta)(\ddd(t),\zeta^{-1})\right)=\chis_i\left(\ddd(t^2),(t,t)_n\right)
		=\chis_i\left(\ddd(t^2),1\right).
		\)
		So, $\mathcal{H}_{i,i}$ contains a function with support on this coset if and only if $\chis_i(\done{t^2})=1$ for all $t\in{\R^{\times}(1+\PP^l})$; that is if and only if ${\chis_i}\big|_{(\TS_l)^2}=1$. Observe that for  $0\leq i<\n$, ${\chis_i}\big|_{(\TS_l)^2}=1$, where $l\geq m$, if and only if  $\chis_i|_{(\TS\cap \KS)^2}=1$. Suppose $\chis_i|_{(\TS\cap \KS)^2}=1$, for some  $0\leq i<\n$. We show that in this case, $m=1$. Suppose $\alpha\in 1+\PP$, consider $f(X)=X^2-\alpha$. Observe that $f(1)=0\mod \PP$, and $f'(1)=2(1)\neq 0\mod p$. By Hensel's lemma, $f(X)$ has a root in $\R$; that is $\alpha\in{\R^{\times}}^2$. Therefore $1+\PP\subset {\R^{\times}}^2$, which implies $\chis_i|_{\TSL\cap \KS_1}=1$, so $m=1$. 
		\item [\textbf{The coset of $\co\lt(x\varpi^r)$}] For $\cm b$ and $\cm b'$ in $\BSL_l$,
		\(
		\cm b \:\co\lt(x\varpi^r)\cm b'=\co\lt(x\varpi^r)
		\)
		implies that $tt'\in{1+\PP^r}$ and $\zeta=\zeta'^{-1}$. Therefore, 
		\(
		\chis_i(\cm b\cm b')=\chis_i(\m b\m b',1)=\chis_i(\left(
		\begin{smallmatrix}
		tt' & ts'+st'^{-1} \\
		0 & t^{-1}t'^{-1} \\
		\end{smallmatrix}
		\right),1).
		\)
		Note that $\left(\begin{smallmatrix}
		tt' & ts'+st'^{-1} \\
		0 & t^{-1}t'^{-1} \\
		\end{smallmatrix}\right)\in \BSL\cap \KS_r $. Hence, $\chis_i(\cm b\cm b')=1$ if and only if $\BSL\cap \KS_r\subseteq \mathrm{ker}(\chis_i)$. The latter holds if and only if $r\geq m$, since $\chis_i$ is  primitive mod $m$.
	\end{description}
	Now, let us summarize our result. There is always one function with support on the identity coset, and $2(l-m)$ functions on cosets represented by
	$\co\lt(x\varpi^r), \:x\in\{1,\varepsilon\}$, $m\leq r<l$. If ${\chis_i}\big|_{(\TS\cap \KS)^2}\neq{1}$, no function in $\mathcal{H}_{i,i}$ has support on the double coset represented by $\w$, otherwise, there exists an additional function in $\mathcal H_{i,i}$ with support on the double coset represented by $\w$.
\end{proof}
Next two lemmas elaborate on the condition $\chis_i|_{(\TS\cap \KS)^2}=1$ that appears in Proposition~\ref{heckedim}. 
\begin{lem}\label{chis_icondition}
	For each $0\leq i<\n$, $\chis_i|_{(\TS\cap \KS)^2}=1$ if and only if $\chis_0|_{(\TS\cap \KS)^2}=\epsilon\circ\varpichar_{{\R^{\times}}^2}^{-2i}$.		
\end{lem}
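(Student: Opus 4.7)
The plan is a direct substitution into the formula of Lemma~\ref{distinctonAS}, followed by using that $\chis_0$ restricts to $\epsilon$ on the central $\mu_n$.

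First I would take an arbitrary element of $(\TS\cap \KS)^2$, which when identified inside $\AS$ takes the form $(\ddd(t^2),1)$ with $t\in \R^\times$. Since $t^2\in \R^\times$ has zero valuation, Lemma~\ref{distinctonAS} applied with $a=t^2$, $r=0$, $\zeta=1$ yields
\[
\chis_i(\ddd(t^2),1)=\chis_0\bigl(\ddd(t^2),\varpichar^{2i}(t^2)\bigr).
\]
Next, Kubota's formula for $\beta$ immediately gives $\beta(\ddd(t^2),\mathrm{I}_2)=1$, so in $\co\SL$ one has the factorization $(\ddd(t^2),\varpichar^{2i}(t^2))=(\ddd(t^2),1)\cdot(\mathrm{I}_2,\varpichar^{2i}(t^2))$. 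Combining this with $\chis_0|_{\mu_n}=\epsilon$ — which holds because $\chis_0$ extends the genuine character $\chis$ of $Z(\TSL)\supset \mu_n$ — produces
\[
\chis_i(\ddd(t^2),1)=\chis_0(\ddd(t^2),1)\cdot\epsilon\bigl(\varpichar^{2i}(t^2)\bigr).
\]

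The desired equivalence follows by rearranging: $\chis_i(\ddd(t^2),1)=1$ for every $t\in \R^\times$ if and only if $\chis_0(\ddd(t^2),1)=\epsilon(\varpichar^{-2i}(t^2))$ for every $t\in \R^\times$, which is exactly $\chis_0|_{(\TS\cap \KS)^2}=\epsilon\circ\varpichar_{{\R^\times}^2}^{-2i}$. There is no substantive obstacle here; the lemma is essentially a bookkeeping consequence of Lemma~\ref{distinctonAS} together with the genuine-central-character condition $\chis_0|_{\mu_n}=\epsilon$, the only routine verification being the triviality of the cocycle contribution $\beta(\ddd(t^2),\mathrm{I}_2)$.
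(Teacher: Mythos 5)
Your proof is essentially the same as the paper's: both apply Lemma~\ref{distinctonAS} to an element $\left(\ddd(t^2),1\right)$ of $(\TS\cap \KS)^2$, pull out the central factor $(\mathrm{I}_2,\varpichar^{2i}(t^2))$, evaluate it via $\chis_0|_{\mu_n}=\epsilon$, and rearrange. Your extra remark verifying that the Kubota cocycle contributes trivially to the factorization is a small bit of added care that the paper leaves implicit, but it does not change the argument.
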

\begin{proof}
	Let $\done{s}\in (\TS\cap \KS)^2$, so $s\in {\R^{\times}}^2$. By Lemma~\ref{distinctonAS},
	\(
	\chis_i(\done{s})=\chis_0\left(\ddd(s),\varpichar(s)^{2i}\right)=\chis_0(\done{s})\epsilon(\varpichar(s)^{2i}),
	\)
	which is equal to $1$ if and only if $\chis_0|_{(\TS\cap \KS)^2}=\epsilon\circ\varpichar_{{\R^{\times}}^2}^{-2i}$.
\end{proof}

\begin{lem}\label{varpichar^4distict}
	If $4\nmid n$ then the characters $\varpichar_{{\R^{\times}}^2}^{-2i}$, $0\leq i<\n$ are distinct. Otherwise, the $\varpichar^{-2i}_{{\R^{\times}}^2}$, $0\leq i<\frac{n}{4}$, are distinct; for $\frac{n}{4}\leq i <\frac{n}{2}$, $\varpichar^{-2i}_{{\R^{\times}}^2}=\varpichar^{-2(i-\frac{n}{4})}_{{\R^{\times}}^2}$.
\end{lem}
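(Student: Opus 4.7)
The plan is to reduce the problem to an elementary divisibility condition by leveraging the explicit formula for $\varpichar|_{\R^\times}$ already computed in the proof of Lemma~\ref{distinctonAS}. Namely, for $a \in \R^\times$ one has $\varpichar(a) = (\varpi,a)_n = \overline{a^{-1}}^{(q-1)/n}$, viewed via $\mu_n \subset \kappa^\times$. Consequently, for any integer $k$ and any $t \in \R^\times$,
\[
\varpichar^{-2k}(t^2) = \overline{t}^{\,4k(q-1)/n}.
\]

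Since $\kappa^\times$ is cyclic of order $q-1$, this character is trivial on all squares $t^2 \in (\R^\times)^2$ if and only if $(q-1) \mid 4k(q-1)/n$, equivalently $n \mid 4k$. Therefore $\varpichar_{{\R^\times}^2}^{-2i} = \varpichar_{{\R^\times}^2}^{-2j}$ precisely when $n \mid 4(i-j)$, and the statement is reduced to reading off the solutions of this congruence in the range $0 \leq i,j < \n$.

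The case analysis is then straightforward. If $n$ is odd, then $\n = n$ and $n \mid 4(i-j) \Leftrightarrow n \mid (i-j)$, which forces $i = j$ in the given range. If $n \equiv 2 \pmod 4$, write $n = 2m$ with $m$ odd; then $\n = m$ and $n \mid 4(i-j) \Leftrightarrow m \mid (i-j)$, again forcing $i = j$ for $0 \leq i,j < m$. This handles the case $4 \nmid n$. Finally, if $4 \mid n$, write $n = 4m$ so that $\n = 2m$ and $n \mid 4(i-j) \Leftrightarrow m \mid (i-j)$; for $0 \leq i,j < 2m = n/2$ the solutions are exactly $i = j$ or $|i-j| = m = n/4$, which gives the claimed identification $\varpichar^{-2i}_{{\R^\times}^2} = \varpichar^{-2(i - n/4)}_{{\R^\times}^2}$ for $n/4 \leq i < n/2$ and distinctness for $0 \leq i < n/4$.

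There is no real obstacle once the explicit formula for $\varpichar|_{\R^\times}$ is in hand; the proof is essentially a careful bookkeeping of the congruence $n \mid 4(i-j)$ according to $\gcd(n,4)$. The only slightly delicate point is that the relevant restriction is to $(\R^\times)^2$ rather than to $\R^\times$, which inflates the exponent from $2k(q-1)/n$ to $4k(q-1)/n$ and is precisely why the factor of $4$ appears in the divisibility criterion.
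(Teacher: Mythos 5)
Your proof is correct and follows essentially the same route as the paper: use the explicit formula $\varpichar(a)=\overline{a^{-1}}^{(q-1)/n}$ to reduce triviality of $\varpichar^{-2i}$ on $(\R^\times)^2$ to the congruence $n\mid 4i$, then read off the solutions in the range $0\le i<\n$. The paper is terser — it only records when $\varpichar^{-2i}|_{(\R^\times)^2}$ is trivial and reads off the answer — while you make the reduction to $n\mid 4(i-j)$ explicit and do the $\gcd(n,4)$ case split carefully, but the underlying idea is identical.
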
	
\begin{proof}
	By definition of $\varpichar$ in~\eqref{varpichar}, $\varpichar^{-2i}(s)=1$ for all $s\in {\R^{\times}}^2$ if and only if $\overline{t^2}^{\frac{(q-1)2i}{n}}=1$ for all $t\in {\R^{\times}}$, or equivalently when $n|4i$.   Therefore, the equality holds only for $i=0$ unless $4|n$, in which case the equality holds for both $i=0$ and $i=\frac{n}{4}$.
\end{proof}
 For $l>m$, let $\co{W}_{i,l}$ denote the $l$-level representations 
 \(
 \co{W}_{i,l}:=(\Ind_{\BSL\cap\KSL}^{\KSL}{\chis}_i)^{\KS_l}/(\Ind_{\BSL\cap\KSL}^{\KSL}{\chis}_i)^{\KS_{l-1}}\). Moreover, for $0\leq i<\n$, set $\V i:= \Ind_{{\BSL}\cap\KSL}^{\KSL}{\chis}_{i}$. 
\begin{cor}\label{mainresultsl2}
	Assume $l\geq m$. We can decompose $\ress$ as follows:
	$$
	\ress\cong\bigoplus_{i=0}^{\n-1}\left(\V i^{\KS_{m}}\oplus\bigoplus_{l>m}\left(\co{W}^+_{i,l}\oplus\co{W}^-_{i,l}\right)\right),
	$$
where $\co{W}^+_{i,l}\oplus\co{W}^-_{i,l}\cong \co{W}_{i,l}$. All the pieces are irreducible, except when $m=1$ and $\chis_0|_{(\TS\cap \KS)^2}= \epsilon\circ\varpichar^{-2i}_{{\R^{\times}}^2}$ for some $0\leq i<\n$, in which case, we are in one of the following situations:
	\begin{enumerate}
		\item If $4\nmid n$ then there is exactly one $0\leq i<\n$ for which $\V i^{\KS_{1}}$ decomposes into two irreducible constituents. All other constituents are irreducible.
		\item If $4|n$ then there are exactly two $0\leq i,k<\n$,  $|i-k|=\frac{n}{4}$ for which $\V i^{\KS_{1}}$ decomposes into two irreducible constituents. All other constituents are irreducible.
	\end{enumerate}	
\end{cor}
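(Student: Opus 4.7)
The plan is to combine Proposition~\ref{ress}, Lemma~\ref{keylammaminor}, and the Hecke-algebra dimension count of Proposition~\ref{heckedim} into a level-by-level endomorphism-ring analysis. By Proposition~\ref{ress}, I reduce to decomposing each $\V i = \Ind_{\BSL\cap\KSL}^{\KSL}\chis_i$ separately. Since $\V i$ is smooth, $\V i = \bigcup_l \V i^{\KS_l}$, and Lemma~\ref{keylammaminor} trims this union to $l \geq m$. Each $\V i^{\KS_l}$ is a finite-dimensional representation of the finite group $\KSL/\KS_l$, hence semisimple; choosing compatible complements in the filtration produces the $\KSL$-equivariant splitting
$$\V i \cong \V i^{\KS_m} \oplus \bigoplus_{l > m} \co W_{i,l},$$
which already has the shape asserted, so what remains is to identify the irreducible content of each summand.

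The key observation is that for any irreducible smooth representation $\sigma$ of $\KSL$ the invariant subspace $\sigma^{\KS_l}$ is either $0$ or all of $\sigma$; consequently each constituent of $\V i$ enters the filtration in full at its own level, and $\co W_{i,l}$ picks up precisely the constituents of $\V i$ of level equal to $l$. In particular, no irreducible is shared between $\V i^{\KS_m}$ and the different $\co W_{i,l}$'s, which gives
$$\dim \mathrm{End}_{\KSL}(\V i^{\KS_l}) \;=\; \dim \mathrm{End}_{\KSL}(\V i^{\KS_m}) + \sum_{m < l' \leq l} \dim \mathrm{End}_{\KSL}(\co W_{i,l'}).$$
By Propositions~\ref{hecke} and~\ref{heckedim} the left-hand side equals $\dim \mathcal{H}_{i,i}$, which in the generic case ($\chis_i|_{(\TS\cap\KS)^2} \neq 1$) is $1 + 2(l-m)$. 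Taking successive differences I obtain $\dim \mathrm{End}_{\KSL}(\V i^{\KS_m}) = 1$ and $\dim \mathrm{End}_{\KSL}(\co W_{i,l}) = 2$ for each $l > m$. Since $2$ decomposes as a sum of squares of positive integers only as $1^2+1^2$, the base piece $\V i^{\KS_m}$ is irreducible and each $\co W_{i,l}$ must split as $\co W^+_{i,l} \oplus \co W^-_{i,l}$ with two non-isomorphic irreducible summands.

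The exceptional case is $\chis_i|_{(\TS\cap\KS)^2} = 1$; the proof of Proposition~\ref{heckedim} shows this forces $m = 1$ and $\dim \mathcal{H}_{i,i} = 2l$. Re-running the same argument now yields $\dim \mathrm{End}_{\KSL}(\V i^{\KS_1}) = 2$, so $\V i^{\KS_1}$ itself breaks into two distinct irreducibles, while each higher $\co W_{i,l}$ still has $\dim \mathrm{End} = 2$ and splits as before. To count how many $i$ trigger the exceptional case, Lemma~\ref{chis_icondition} reformulates the condition as $\chis_0|_{(\TS\cap\KS)^2} = \epsilon \circ \varpichar^{-2i}_{{\R^{\times}}^2}$, and Lemma~\ref{varpichar^4distict} closes the count: when $4 \nmid n$ the characters on the right are pairwise distinct for $0 \leq i < \n$, so exactly one such $i$ exists; when $4 \mid n$ they collapse in pairs $i \leftrightarrow i + n/4$, so exactly two indices satisfy the identity, differing by $n/4$.

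The main technical point needing care is the disjointness-across-levels argument: only once it is established that irreducibles occurring in $\co W_{i,l}$ and $\co W_{i,l'}$ for $l \neq l'$ are non-isomorphic does the cumulative Hecke-algebra dimension from Proposition~\ref{heckedim} translate into the clean level-by-level statement $\dim \mathrm{End}_{\KSL}(\co W_{i,l}) = 2$ on which the whole decomposition rests. Everything else is an assembly of ingredients already in place.
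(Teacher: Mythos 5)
Your proof is correct and follows essentially the same route as the paper's: reduce via Proposition~\ref{ress} and Lemma~\ref{keylammaminor}, compute the endomorphism algebras of the level-$l$ invariants via Proposition~\ref{hecke} and Proposition~\ref{heckedim}, take successive differences to get $\dim\mathrm{End}_{\KSL}(\co W_{i,l})=2$, and invoke Lemma~\ref{chis_icondition} and Lemma~\ref{varpichar^4distict} for the exceptional count. The paper compresses the successive-difference step to a single citation; your explicit disjointness-across-levels argument (using normality of $\KS_l$ so that an irreducible is either wholly $\KS_l$-fixed or has no $\KS_l$-fixed vectors) is exactly the implicit reasoning that justifies it, so you have filled in a gap the paper leaves to the reader rather than taken a different path.
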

\begin{proof}
	It follows from Lemma~\ref{keylammaminor} and  Proposition~\ref{heckedim} that for  $l>m$,
	\(
	\dim \Hom(\co{W}_{i,l},\co{W}_{i,l})=2.
	\)
	Hence, $\co{W}_{i,l}$ decomposes into two inequivalent irreducible subrepresentations. Moreover, 
	\begin{equation}\label{Cor-mainresultsl-local}
	\dim\Hom(\V i^{\KS_m},\V i^{\KS_m})=\begin{cases}
	1,&\text{if }\:{\chis_i}\big|_{(\TS\cap \KS)^2}\neq{1}\\
	2,&\text{otherwise.}
	\end{cases}
	\end{equation}
	By Lemma~\ref{chis_icondition}, ${\chis_i}\big|_{(\TS\cap \KS)^2}={1}$ is equivalent to $\chis_0|_{(\TS\cap \KS)^2}=\epsilon\circ\varpichar^{-2i}_{{\R^{\times}}^2}$, which also implies that $m=1$. Hence, $\V i^{\KS_m}$ is irreducible except when $m=1$ and $\chis_0|_{(\TS\cap \KS)^2}=\epsilon\circ\varpichar^{-2i}_{{\R^{\times}}^2}$, where it decomposes into two irreducible constituents. If the latter is the case, by Lemma~\ref{varpichar^4distict}, there is exactly one $0\leq i<\n$ satisfying $\chis_0|_{(\TS\cap \KS)^2}=\epsilon\circ\varpichar^{-2i}_{{\R^{\times}}^2}$ if $4\nmid n$, and there are exactly two $0\leq i<\n$ satisfying $\chis_0|_{(\TS\cap \KS)^2}=\epsilon\circ\varpichar^{-2i}_{{\R^{\times}}^2}$ if $4|n$.  
\end{proof}
Next we determine the multiplicity of each constituent in the decomposition in Corollary~\ref{mainresultsl2}. To do so, we count the dimension of 
\(
\Hom_{\KSL}\left(\Ind_{\BSL_l}^{\KSL}{\chis_k},\Ind_{\BSL_l}^{\KSL}{\chis_i}\right),
\)
which is equal to the dimension of the Hecke algebra $\mathcal{H}_{k,i}=\mathcal{H}(\BSL_l\backslash{\KSL}/\BSL_l,\chis_k,{\chis_i})$.
\begin{prop}\label{multiplicityHeckesl}	
	Let $l\geq m$, $0\leq k,i <\n$, and $i\neq k$. Then \[\dim\mathcal{H}_{k,i}=
	\begin{cases}
	2l-1,& \text{ if}\:\: {\chis_0}|_{(\TS\cap \KS)^2}=\epsilon\circ\varpichar_{{\R^\times}^2}^{-(k+i)}\\
	2(l-m),& \text{ otherwise}.
	\end{cases}
	\]

\end{prop}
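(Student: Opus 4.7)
My plan is to adapt the double-coset analysis from the proof of Proposition~\ref{heckedim} to the case $k\neq i$. Exactly as there, a basis of $\mathcal{H}_{k,i}$ is indexed by the $\cm x\in S$ (from~\eqref{setofdoublecoset}) for which $\chis_k(\cm b)\chis_i(\cm b')=1$ holds on every pair $(\cm b,\cm b')\in\BSL_l\times\BSL_l$ with $\cm b\cm x\cm b'=\cm x$. I treat the three families of coset representatives in turn.

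For the identity coset the stabilizer is $\{(\cm b,\cm b^{-1})\}$, so the condition collapses to $\chis_k=\chis_i$ on $\BSL_l$. This fails for $k\neq i$: by Lemma~\ref{distinctonAS} the quotient $\chis_k/\chis_i$ equals $\epsilon\circ\varpichar^{2(k-i)}$ on $\TSL\cap\KSL$, and this is nontrivial on $\R^\times$. Hence the identity coset contributes nothing. For $\cm x=\w$, the same stabilizer calculation as in Proposition~\ref{heckedim} gives $\m b=\m b'=\ddd(t)$ with $t\in\R^\times$ and $\zeta'=\zeta^{-1}$, and using Lemma~\ref{distinctonAS} together with $\beta(\ddd(t),\ddd(t))=1$ for $t\in\R^\times$ one obtains
\[
\chis_k(\cm b)\chis_i(\cm b')=\chis_0(\ddd(t^2),1)\cdot\epsilon\bigl(\varpichar^{2(k+i)}(t)\bigr).
\]
Requiring this to be identically $1$ in $t$ is exactly the stated condition $\chis_0|_{(\TS\cap\KS)^2}=\epsilon\circ\varpichar_{{\R^\times}^2}^{-(k+i)}$, and the Hensel-type argument from Proposition~\ref{heckedim} shows that when this condition holds, $m=1$.

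For $\cm x=\co\lt(x\varpi^r)$ with $1\leq r<l$, the stabilizer analysis from Proposition~\ref{heckedim} gives $tt'\in 1+\PP^r$, $\zeta'=\zeta^{-1}$, and $\cm b\cm b'\in\BSL\cap\KS_r$ with diagonal block $\ddd(tt')$. A closer look at the matrix equation $\m b\,\lt(x\varpi^r)\,\m b'=\lt(x\varpi^r)$ shows $t=t'$ and $t^2\in 1+\PP^r$; by Hensel ($q$ is odd, since $n\geq 2$ divides $q-1$) this forces $t\in\pm(1+\PP^r)$. Using $\varpichar(tt')=1$ on $1+\PP\subseteq\R^{\times n}$ (so $\varpichar(t')=\varpichar(t)^{-1}$), a parallel computation yields
\[
\chis_k(\cm b)\chis_i(\cm b')=\chis_0(\ddd(tt'),1)\cdot\epsilon\bigl(\varpichar^{2(k-i)}(t)\bigr).
\]
The key observation is that $\varpichar^{2(k-i)}$ is trivial on $\pm(1+\PP^r)$: on the $1+\PP^r$-component because $\varpichar|_{1+\PP}=1$, and at $-1$ because $\varpichar(-1)^2=(\varpi,-1)_n^2=(\varpi,1)_n=1$. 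Hence the whole condition collapses to $\chis_0|_{\TSL\cap\KS_r}=1$, i.e.\ $r\geq m$, exactly as in the $k=i$ case.

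Summing the contributions: the identity coset gives $0$; the $\w$ coset contributes $1$ precisely under the stated condition (and then $m=1$); the cosets of $\co\lt(x\varpi^r)$ for $x\in\{1,\varepsilon\}$ and $m\leq r<l$ contribute $2(l-m)$. This yields $1+2(l-1)=2l-1$ when the condition holds and $2(l-m)$ otherwise. The main subtlety I anticipate is the vanishing of the defect character $\varpichar^{2(k-i)}$ on the stabilizer of the $\co\lt(x\varpi^r)$ cosets: this character is exactly what kills the identity coset for $k\neq i$, yet it restricts trivially to the ``near-$\pm 1$'' torus elements that appear in these unipotent stabilizers, which is precisely why the unipotent-coset count for $k\neq i$ matches that for $k=i$.
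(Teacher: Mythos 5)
Your proposal is correct and follows essentially the same route as the paper: you analyze each family of double cosets from~\eqref{setofdoublecoset}, show the identity coset never contributes because $\chis_k\neq\chis_i$, show $\w$ contributes exactly under the stated condition (forcing $m=1$), and show the unipotent cosets $\co\lt(x\varpi^r)$ contribute iff $r\geq m$ because the ``defect'' power of $\varpichar$ dies on the stabilizer.

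One small caveat. The parenthetical ``$q$ is odd, since $n\geq 2$ divides $q-1$'' is a non sequitur (e.g.\ $q=4$, $n=3$), so your invocation of Hensel to conclude $t\in\pm(1+\PP^r)$ is not available when the residue characteristic is $2$. That detour is unnecessary: from $tt'\in 1+\PP$ (or from $t^2\in 1+\PP^r$) one gets $\varpichar(t)^2=(\varpi,t^2)_n=1$ directly, since $1+\PP\subset\R^{\times n}$; this is the route the paper takes, and it makes the vanishing of $\varpichar^{2(k\pm i)}(t)$ immediate without any hypothesis on $p$. Also note that because $t=t'$ in the stabilizer, your $\varpichar^{2(k-i)}(t)$ and the paper's $\varpichar(t)^{2k}\varpichar(t')^{2i}=\varpichar(t)^{2(k+i)}$ both evaluate to $1$ once $\varpichar(t)^2=1$ is known, so the sign discrepancy is harmless; and the factor $\epsilon(\zeta\zeta')$ you silently dropped is indeed $1$ because $\zeta'=\zeta^{-1}$ on the stabilizer.
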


\begin{proof}
	Similar to the proof of Proposition~\ref{heckedim}, we determine which double cosets in $\BSL_l\backslash{\KSL}/\BSL_l$ support a function in $\mathcal{H}_{k,i}$. For every double coset representative $\cm x$ in Lemma~\eqref{setofdoublecoset}, there exists a function $f\in\mathcal{H}_{k,i}$ with support on the double coset represented by $\cm x$ if and only if $\cm b\cm x\cm b'=\cm x$, $\cm b,\cm b'\in \BSL_l$, implies that $\chis_{k}(\cm b)\chis_{i}(\cm b')=1$. Let $t,t'\in {\mathcal{O}^{\times}}({1+\PP^l})$, $s,s'\in  \PP^l$ and $\zeta,\zeta'\in\mu_n$, so that  $
	\cm b=(\m b,\zeta)=\left(\left(\begin{smallmatrix}t&s\\
	0& t^{-1}\end{smallmatrix}\right),\zeta\right)$ and $\cm b'=(\m b',\zeta')=\left(\left(\begin{smallmatrix}t'&s'\\
	0& t'^{-1}\end{smallmatrix}\right),\zeta'\right) $ are arbitrary elements of $\BSL_l$. 
	
	Because $\chis_k\neq \chis_i$, there is no function in $\mathcal{H}_{k,i}$ with support on the identity double coset.
	
	For the double coset of $\w$, $\cm b\w\cm b'=\w$ implies that  $\m b=\m b'=\ddd(t)$, for some $t\in{\mathcal{O}^{\times}}({1+\PP^l})$ and $\zeta'=\zeta^{-1}$. Therefore, $\chis_k(\cm b){\chis_{i}}(\cm b')={\chis}_k\left(\ddd(t),\zeta\right)\chis_{i}\left(\ddd(t),\zeta^{-1}\right)$ equals
	\begin{eqnarray*}
		\chis_0\left(\ddd(t),\varpichar(t)^{2k}\zeta\right)\chis_0\left(\ddd(t),\varpichar(t)^{2i}\zeta^{-1}\right)&=&{\chis_0}\left(\ddd(t^2), \varpichar(t)^{2(k+i)}\right)
	={\chis_0}\left( \done{t^2}(\mathrm{I}_2,\varpichar(t^2)^{k+i})\right)\\
		&=&{\chis_0}\left(\done{t^2}\right)\epsilon\left(\varpichar(t^2)^{k+i}\right).
	\end{eqnarray*}
	Therefore, because $l\geq m$, $\chis_k(\cm b){\chis_{i}}(\cm b')=1$ if and only if ${\chis_0}|_{(\TS\cap \KS)^2}=\epsilon\circ\varpichar_{{\R^{\times}}^2}^{-(k+i)}$. In this case, $m=1$ and $\w$ supports a function in $\mathcal{H}_{k,i}$.

	Finally, for the double cosets represented by $\co\lt(x\varpi^r)$, $x\in \{1,\varepsilon\}$, $1\leq r<l$, $\cm b\:\co\lt(x\varpi^r)\cm b'=\co \lt(x\varpi^r)$ implies that
	$\zeta'=\zeta^{-1}$, and $t+s\varpi^r=t'^{-1}\mod \PP^l$, or equivalently, $t=t'^{-1}\mod \PP^r$, and $t^{-1}\varpi^r=\varpi^rt'^{-1}\mod \PP^l$, or equivalently $t^{-1}=t'^{-1}\mod \PP^{l-r}$. 
	Observe that, in general, $\chis_k(\cm b){\chis_{i}}(\cm b')$ is equal to 
	\begin{gather}\label{multiplicityHeckesl-local}
	{\chis}_k\left(\ddd(t),\zeta\right)\chis_{i}\left(\ddd(t'),\zeta'\right)=
	\chis_0\left(\ddd(t),\varpichar(t)^{2k}\zeta \right) \chis_0\left(\ddd(t'),\varpichar(t')^{2i}\zeta'\right)
	=\chis_0\left(\ddd(tt'),\varpichar(t)^{2k}\varpichar(t')^{2i}\zeta \zeta'\right)\\\notag
	=\chis_0\left(\done{tt'}\right)\epsilon\left(\varpichar(t)^{2k}\varpichar(t')^{2i}\zeta \zeta'\right).
	\end{gather}
	Note that $\varpichar$ is primitive mod one. Observe that $r\geq 1$ and $l-r\geq 1$. Therefore, $t=t'^{-1}\mod \PP$ and $t=t'\mod \PP$, which implies that $t=t'=\alpha\mod \PP$ where $\alpha\in \{\pm 1\}$. Hence, $\varpichar (t)^2=\varpichar(t')^2=1$, and~\eqref{multiplicityHeckesl-local} simplifies to $\chis_0\left(\done{tt'}\right)\epsilon(\zeta \zeta')$. We are in one of the following situations:
	\begin{description}
		\item [Case 1] Suppose $r\geq m$. Then we have $\zeta'=\zeta^{-1}$, and $t=t'^{-1}\mod \PP^m$; that is $tt'\in 1+\PP^m$. Hence, $\chis_0\left(\done{tt'}\right)\epsilon(\zeta \zeta')= \chis_0(tt')=1$, because $\chis_0$ is primitive mod $m$. Therefore, in this case, there is always a function in $\mathcal {H}_{k,i}$ with support on these double cosets. 
		\item [Case 2] Suppose $r<m$. Then $\zeta'=\zeta^{-1}$, so  $\chis_0\left(\done{tt'}\right)\epsilon(\zeta \zeta')= \chis_0(tt')$, which equals one if and only if $tt'\in 1+\PP^m$, which is not the case in general. Hence,  in this case, there is no function in $\mathcal {H}_{k,i}$ with support on these double cosets. 
	\end{description}
	To summarize the result, the coset represented by $\w$ supports a function in $\mathcal{H}_{k,i}$ if and only if $\chis_0|_{(\TS\cap \KS)^2}=\epsilon\circ\varpichar_{{\R^\times}^2}^{-(k+i)}$. If $r\geq m$  then the cosets represented by $\lt(x\varpi^r)$ support a function in $\mathcal{H}_{k,i}$; otherwise, there is no function in $\mathcal {H}_{k,i}$ with support on these double cosets. 
\end{proof}
\begin{cor}\label{multiplicitysl}
	In the decomposition of $\Res_{\KSL}\Ind_{\BSL}^{\co \SL}\pis$ given in Corollary~\ref{mainresultsl2}, 
	\begin{enumerate}
		\item For  each $0\leq i<\n$ and $l>m$, there exists a way of decomposing $\co W_{i,l}$ as $\co W_{i,l}^+\oplus\co W_{i,l}^-$ such that for $l>m$, $\co W_{i,l}^{+}\cong\co W_{j,l}^{+}$ and $\co W_{i,l}^{-}\cong \co W_{j,l}^{-}$ for all $ 0\leq i,j<\n$.
		\item For $l=m$, $\{(\Ind_{{\BSL}\cap\KSL}^{\KSL}{\chis}_{i})^{\KS_{m}}\mid 0\leq i<\n\}$ consists of mutually inequivalent representations, except when $m=1$ and ${\chis_0}|_{(\TS\cap \KS)^2}=\epsilon\circ\varpichar_{{\R^{\times}}^2}^{-j}$, for some $0\leq j<\n$, where \(\V i^{\KS_{1}}\cong \V k^{\KS_{1}},\) exactly when $i+k\equiv j\mod \n$.
	\end{enumerate} 
\end{cor}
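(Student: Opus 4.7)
The plan is to combine Propositions~\ref{heckedim} and~\ref{multiplicityHeckesl} with Frobenius reciprocity, then use a level-by-level filtration of the induced representations to extract the multiplicities. For Part (2), I would invoke Proposition~\ref{multiplicityHeckesl} at $l=m$. For $i\neq k$, the space $\Hom_{\KSL}(\V i^{\KS_m},\V k^{\KS_m})\cong\mathcal{H}_{k,i}$ has dimension $0$ outside the exceptional setting, so by semisimplicity the $\V i^{\KS_m}$ share no irreducible constituents and are pairwise inequivalent. In the exceptional setting ($m=1$ and $\chis_0|_{(\TS\cap\KS)^2}=\epsilon\circ\varpichar_{{\R^\times}^2}^{-j}$), this hom space has dimension $1$ precisely when $i+k\equiv j\pmod{\n}$. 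When the congruence fails the two representations are inequivalent for the same reason; when it holds with $i\neq k$, Lemma~\ref{chis_icondition} rules out either $i$ or $k$ being \emph{special} in the sense of Corollary~\ref{mainresultsl2} (otherwise $2i\equiv j$ combined with $i+k\equiv j$ would force $k\equiv i\pmod{\n}$), so both $\V i^{\KS_1}$ and $\V k^{\KS_1}$ are irreducible and the one-dimensional hom space must realize an isomorphism by Schur's lemma.

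For Part (1), the core step is to prove $\dim\Hom_{\KSL}(\co W_{i,l},\co W_{k,l})=2$ for every $l>m$ and every pair $i,k$. I plan to work with the restriction map
\[
R\colon\Hom_{\KSL}(\V i^{\KS_l},\V k^{\KS_l})\longrightarrow\Hom_{\KSL}(\V i^{\KS_{l-1}},\V k^{\KS_{l-1}}),
\]
which is well-defined because $\KSL$-equivariant maps carry $\KS_{l-1}$-invariants to $\KS_{l-1}$-invariants, and which is surjective by extending by zero along the $\KSL$-stable complement $\co W_{i,l}$. The arithmetic of the formulas in Propositions~\ref{heckedim} and~\ref{multiplicityHeckesl} shows that the source of $R$ has dimension exactly two larger than the target in each case, so it suffices to identify $\ker R$ with $\Hom_{\KSL}(\co W_{i,l},\co W_{k,l})$. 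Granting this, since by Corollary~\ref{mainresultsl2} each $\co W_{i,l}$ decomposes into two inequivalent irreducibles, $\dim\Hom=2$ forces the irreducible constituents of $\co W_{i,l}$ and $\co W_{k,l}$ to coincide as multisets, and one may label the $\pm$ pieces consistently across all $i$.

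The main obstacle is the identification of $\ker R$. An element of the kernel is determined by an arbitrary $\KSL$-equivariant map $\co W_{i,l}\to\V k^{\KS_l}=\V k^{\KS_{l-1}}\oplus\co W_{k,l}$, and I must show the component landing in $\V k^{\KS_{l-1}}$ vanishes, i.e., $\Hom_{\KSL}(\co W_{i,l},\V k^{\KS_{l-1}})=0$. My plan is to first observe that $\co W_{i,l}^{\KS_{l-1}}=0$ by applying the exact functor of $\KS_{l-1}/\KS_l$-invariants to $0\to\V i^{\KS_{l-1}}\to\V i^{\KS_l}\to\co W_{i,l}\to 0$ and noting $(\V i^{\KS_l})^{\KS_{l-1}}=\V i^{\KS_{l-1}}$, and then to invoke the equality of invariants and coinvariants for the finite group $\KS_{l-1}/\KS_l$ acting on the finite-dimensional space $\co W_{i,l}$, which forces every $\KSL$-equivariant map out of $\co W_{i,l}$ into a $\KS_{l-1}$-fixed target to be zero.
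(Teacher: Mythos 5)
Your proposal is correct and follows the same route as the paper: both Part (1) and Part (2) are read off from Propositions~\ref{heckedim} and~\ref{multiplicityHeckesl} by comparing Hecke-algebra dimensions at consecutive levels. The paper's own proof is a one-liner that simply asserts $\dim \Hom_{\KSL}(\co W_{i,l},\co W_{k,l})=2$ "follows from Proposition~\ref{multiplicityHeckesl}"; your argument correctly supplies the implicit step, namely that the cross-terms $\Hom_{\KSL}(\co W_{i,l},\V k^{\KS_{l-1}})$ and $\Hom_{\KSL}(\V i^{\KS_{l-1}},\co W_{k,l})$ vanish because $\co W_{i,l}$ has no $\KS_{l-1}$-fixed vectors and invariants coincide with coinvariants for the finite group $\KS_{l-1}/\KS_l$ over $\mathbb{C}$.
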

\begin{proof}
	It follows from Proposition~\ref{multiplicityHeckesl} that for $l>m$,  
	\(
	\dim \Hom_{\KSL}\left(\co W_{i,l},\co W_{k,l}\right)=2,
	\)
	and when $i+k\equiv j\mod \n$
	\[
	\dim \Hom _{\KSL}\left(\V i^{\KS_{m}},\V k^{\KS_{m}}\right)=\begin{cases}
	1,& {\chis_0}|_{(\TS\cap \KS)^2}=\epsilon\circ\varpichar_{{\R^{\times}}^2}^{-j}\\
	0, &\text{otherwise,}
	\end{cases}
	\]
	and hence the result. 	
\end{proof}

In order to further investigate the irreducible spaces $\co{W}^+_{i,l}$ and $\co{W}^-_{i,l}$, we will show that $\co{W}_{i,l}$, $0\leq i< \n$, is the restriction to $\KSL$ of an irreducible representation of the maximal compact subgroup $\co\KG$ of the covering group $\co \GL$ of $\mathrm{GL}_2(\F)$.

%==============================================================

\section{Branching Rules for $\co\GL$}
\label{branching-rule-sectionGL}
 We define the genuine principal series representations of $\co \GL$ similarly by starting with a genuine smooth irreducible representation $\pig$ of $\TGL$ with the central character $\chig$, which is constructed via the Stone-von Neumann theorem. Observe that $\dim{\pig}=[\TGL: \AG]=n^2$. Then, after extending $\pig$ trivially over $\UNG$, the genuine principal series representation $\psrg$ of $\co \GL$ is $\Ind_{\BGL}^{\co \GL}\pig$. Applying a similar machinery as in Section~\ref{branching-rule-section}, we obtain the $\mathsf{K}$-type decomposition for $\Res_{\KGL}\psrg$. Since the argument in Section~\ref{branching-rule-section} goes through almost exactly, here we only overview the main steps and  point out the differences. For detailed calculations, see~\cite{Camelia-PHDthesis}.
 
 Similar to Lemma~\ref{distinctonAS}, it follows that $\Res_{\AG}\pig\cong \bigoplus_{i,j=0}^{n-1}\chig_{i,j}$,
 where the $\chig_{i,j}$ denote $n^2$ distinct characters of $\AG$, defined by
 \(
 \chig_{i,j}\left(\ddd(a\varpi^{un},b\varpi^{vn}),\zeta\right)=\chig_0\left(\ddd(a\varpi^{un},b\varpi^{vn}),\varpichar(a)^{-j}\varpichar(b)^{-i}\zeta\right)
 \) 
 where $a,b\in \R^{\times}$, $u,v\in\mathbb{Z}$ and $\zeta\in\mu_n$ and $\varpichar(a)=(\varpi,a)_n$ was defined in~\eqref{varpichar}, and $\chig_0$ is a fixed extension of $\chig$ to $\AG$. The $\chig_{i,j}$ remain distinct when restricted to $\TGL\cap \KGL$, and again writing $\chig_{i,j}$ for there restrictions, $\Res_{\TGL\cap\co{\KG}}\pig\cong\bigoplus_{i,j=0}^{n-1}\chig_{i,j}$. Then similar to Proposition~\ref{ress}, we have $
 \resg\cong\bigoplus_{i,j=0}^{n-1}{\Ind_{{\BGL}\cap{\KGL}}^{{\KGL}}{\chig_{i,j}}}.$ This latter isomorphism reduces the problem of decomposing the $\mathsf{K}$-type to the one of decomposing each $\Ind_{\co{\BG}\cap\co{\KG}}^{\co{\KG}}{\chig_{i,j}}$, which, by smoothness, can be written as the union of its $\KG_l$, $l\geq 1$, fixed points. 
 
 Suppose $\chig$ is primitive mod $m$. It follows that the $\chig_{i,j}$ are also primitive mod $m$. Set $\BGL_l=(\BGL\cap \KGL)\KG_l$. It can be seen that each level $l$ representation $\left(\Ind_{\BGL\cap\KGL}^{\KGL}{\chig_{i,j}}\right)^{\KG_l}=\Ind_{\BGL_l}^{\KGL}{\chig_{i,j}}$ if $l\geq{m}$, and is zero if $l<m$. Similar to Proposition~\ref{hecke}, one can see that  \(\dim\Hom_{\KGL}(\Ind_{\BGL_l}^{\KGL}\chig_{i,j},\Ind_{\BGL_l}^{\KGL}\chig_{i,j})=\dim \mathcal{H'}_{i,j}({\BGL_l}\backslash{\KGL}/\BGL_l,\chig_{i,j},\chig_{i,j})\).
We count the dimension of $\mathcal{H'}_{i,j}$ using a method similar to the one we used in Proposition~\ref{heckedim}. To do so, we need to calculate a set of double coset representatives of $\BGL_l$ in $\KGL$.
 \begin{lem}\label{doublecosetrepGL}
 A complete set of double coset representatives of $\BGL_l$ in $\KGL$ is given by $
 \{(\mathrm{I}_2,1),\co w, \co \lt(\varpi^r)\mid 1\leq{r}<l\}.$
 \end{lem}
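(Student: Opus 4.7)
The plan is to reduce the problem to the linear group using $\mu_n\subset\BGL_l$, and then classify double cosets by the valuation of the $(2,1)$-entry; the step that distinguishes this case from the $\SL$ analogue~\eqref{setofdoublecoset} is that independent scaling of the two diagonal entries of $\TG$ absorbs any unit factor in $\lt(u\varpi^r)$. Since $\mu_n$ is central in $\KGL$ and contained in $\BGL\cap\KGL$, it lies in $\BGL_l$, so $\mathsf p$ induces a bijection
\[
\BGL_l\backslash\KGL/\BGL_l\;\longleftrightarrow\;\BG_l\backslash\KG/\BG_l,
\]
where $\BG_l:=(\BG\cap\KG)\KG_l$. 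It therefore suffices to produce linear representatives and lift each $\m g\in\KG$ to $(\m g,1)\in\KGL$.

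For $\m g=\left(\begin{smallmatrix} a & b \\ c & d \end{smallmatrix}\right)\in\KG$ set $r(\m g):=\min(\val(c),l)$; this is constant on each $\BG_l$-double coset, since $\BG\cap\KG$ scales $c$ by a unit and $\KG_l$ perturbs $c$ within $\PP^l$. If $r(\m g)=l$, then $c\in\PP^l$ and $\det(\m g)\in\R^\times$ forces $a,d\in\R^\times$, so $\m g\in\BG_l$, giving the identity double coset. If $r(\m g)=0$, a standard Bruhat-style reduction — clearing the $a$- and $d$-entries with elements of $\UNG\cap\KG$ and rescaling with $\TG\cap\KG$ — brings $\m g$ into the double coset of $w$. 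If $1\leq r(\m g)=r<l$, the relation $ad=\det(\m g)+bc$ with $bc\in\PP$ again forces $a,d\in\R^\times$, and the analogous reduction yields $\m g\in\BG_l\,\lt(u\varpi^r)\,\BG_l$ for some $u\in\R^\times$.

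The key identity is
\[
\ddd(1,u)\,\lt(\varpi^r)\,\ddd(1,u^{-1})\;=\;\lt(u\varpi^r),
\]
with both diagonal factors in $\BG\cap\KG\subset\BG_l$; it shows that $\lt(u\varpi^r)$ and $\lt(\varpi^r)$ share a single $\BG_l$-double coset for every $u\in\R^\times$, so one representative $\lt(\varpi^r)$ suffices per $r$. The analogous rescaling inside $\SL$ would force a diagonal $\ddd(y)$ with $y^2=u^{-1}$, available only when $u$ is a square — which is precisely what produces the pair $\lt(x\varpi^r)$, $x\in\{1,\varepsilon\}$, in~\eqref{setofdoublecoset}. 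Distinctness of the three types of representatives is immediate from their distinct values of $r$ (namely $l$, $0$, and $r\in\{1,\dots,l-1\}$). The principal technical point is verifying that every row/column operation is implemented by an element of $\BG_l$ rather than of $\BG(\F)$ only, which in each case above follows from the fact that the relevant entries of $\m g$ lie in $\R^\times$; once this is checked in the linear group, the lift $\m g\mapsto(\m g,1)$ transfers the classification to $\KGL$ and concludes the proof.
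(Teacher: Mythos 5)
Your proof is correct, and it takes a genuinely different route from the paper's. The paper reduces to the $\SL$ case: it uses the isomorphism $\F^{\times}\ltimes\co\SL\cong\co\GL$ to show that the set $S$ from~\eqref{setofdoublecoset} already exhausts $\BGL_l\backslash\KGL/\BGL_l$, and then collapses the pairs $\co\lt(\varpi^r)$, $\co\lt(\varepsilon\varpi^r)$ into a single coset via the conjugation $\left(\ddd(\varepsilon^{-1},1),1\right)\co\lt(\varpi^r)\left(\ddd(\varepsilon,1),1\right)=\co\lt(\varepsilon\varpi^r)$; it then asserts without much elaboration that the remaining cosets stay distinct. You instead pass directly to the linear quotient using $\mu_n\subset\BGL_l$, and classify $\BG_l\backslash\KG/\BG_l$ from scratch by the $\BG_l$-bi-invariant $r(\m g)=\min(\val(c),l)$, performing the congruence-adapted Bruhat reduction explicitly. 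Both proofs use the same key fact (that $\TG\cap\KG$, unlike $\TS\cap\KS$, can scale the lower-left entry by an arbitrary unit), but your version is self-contained in that it does not rely on the set $S$ already computed for $\SL$, and your invariant $r(\m g)$ makes the distinctness of the representatives immediate rather than deferred. The paper's version buys brevity by reusing the $\SL$ result. One minor point you should be explicit about: the reduction in the $r(\m g)=0$ and $1\le r(\m g)<l$ cases requires dividing by the entries $a$, $c$ or $d$, so you must justify — as you do in passing — that those entries are units; this is where the hypotheses $c\in\R^\times$ (resp.\ $ad\equiv\det\m g\not\equiv 0\bmod\PP$) enter.
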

 \begin{proof}
 Note that this set is a subset of the set $S$ in~\eqref{setofdoublecoset}. Observe that under the isomorphism 
 \begin{equation}\label{doublecosetrepGL-local}
 \F^{\times}\ltimes\co \SL\cong\co \GL, \quad 
 \left(y,(\m g,\zeta)\right)\mapsto \left(\ddd(1,y)\m g,\zeta\right),
 \end{equation}
 $\R^{\times}\times\KSL$ maps to $\KGL$ and $\R^{\times}\times {\BSL_{l}}$ maps to $\BGL_{l}$. For every $\cm k'\in \KGL$, let $(y,\cm k)$ be the inverse image of $\cm k'$ under the isomorphism~\eqref{doublecosetrepGL-local}, and let $\cm b_1,\cm b_2\in \BSL_{l}$ be such that $\cm b_1\cm x\cm b_2=\cm k$, for some $\cm x\in S$. Let $\cm b'_1$ and $\cm b'_2$ be the image of $(y,\cm b_1)$ and $(y,\cm b_2)$ under~\eqref{doublecosetrepGL-local} respectively. It follows from the multiplication of $ \F^{\times}\ltimes\co \SL$ and the isomorphism map~\eqref{doublecosetrepGL-local}, that $\cm b'_1\cm x\cm b'_2=\cm k'$. Thus, $\KGL=\bigcup_{\cm x\in S}\BGL_{l}\cm x\BGL_{l}$. A short calculation shows that
 $$
 \left(\ddd(\varepsilon^{-1},1),1\right)
 \co\lt(\varpi^r)\left(\ddd(\varepsilon,1),1\right)=\left(\lt(\varepsilon\varpi^r),(\varpi^r,\varepsilon)_n(\varepsilon,\varpi^r)_n\right)=\co\lt(\varepsilon\varpi^r),
 $$
 where $\varepsilon$ is a fixed non-square and $1\leq{r}<l$. It is not difficult to see that other cosets of $S$ remain distinct in $\KGL$.
 \end{proof}
The following proposition can be proved similar to Proposition~\ref{heckedim}.
\begin{prop}\label{heckedimGL}
	Let $l\geq m$. Then 
	\(
	\mathrm{dim}\mathcal{H'}_{i,j}=
	\begin{cases}
	1+(l-m),&\text{if}\quad {\chig_{i,j}\big|_{\TG\cap \KG}\neq{1}};\\
	2+(l-m),&\text{otherwise.}
	\end{cases}
	\)
\end{prop}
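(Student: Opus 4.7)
The plan is to follow the proof of Proposition~\ref{heckedim} very closely. A basis of $\mathcal{H'}_{i,j}$ is indexed by those double coset representatives $\cm x$ from Lemma~\ref{doublecosetrepGL} for which every factorization $\cm b \cm x \cm b' = \cm x$ with $\cm b, \cm b' \in \BGL_l$ forces $\chig_{i,j}(\cm b \cm b') = 1$. Writing an arbitrary element of $\BGL_l$ as $\cm b = \left(\left(\begin{smallmatrix} t_1 & s \\ 0 & t_2 \end{smallmatrix}\right), \zeta\right)$ with $t_1, t_2 \in \R^{\times}(1 + \PP^l)$, $s \in \PP^l$ and $\zeta \in \mu_n$, and analogously for $\cm b'$, I would split the analysis into three cases, one per family in Lemma~\ref{doublecosetrepGL}.

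The identity coset always contributes one basis element (the function $\chig_{i,j}$ itself). For each $\co\lt(\varpi^r)$ with $1 \leq r < l$, the matrix equation $\m b\,\lt(\varpi^r)\,\m b' = \lt(\varpi^r)$ forces $\m b \m b' \in \BG \cap \KG_r$, exactly as in the proof of Proposition~\ref{heckedim}; by primitivity of $\chig_{i,j}$ mod $m$, this coset supports a function precisely when $r \geq m$, producing $l - m$ basis elements.

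The delicate case is the Weyl coset. The equation $\cm b\,\co w\,\cm b' = \co w$ forces $s = s' = 0$, $\m b = \ddd(t_1, t_2)$, $\m b' = \ddd(t_2^{-1}, t_1^{-1})$, together with a cocycle-induced relation between $\zeta$ and $\zeta'$. Combining the explicit formula for $\chig_{i,j}$ (the $\GL$-analogue of Lemma~\ref{distinctonAS}) with the values of $\beta'$ on the diagonal torus, $\chig_{i,j}(\cm b)\chig_{i,j}(\cm b')$ reduces, modulo the $(1 + \PP^l)$-part which is killed because $l \geq m$, to the evaluation of $\chig_{i,j}$ on a generic element of $\TG \cap \KG$. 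Therefore this coset contributes a basis element if and only if $\chig_{i,j}|_{\TG \cap \KG} = 1$, and summing the three contributions yields the stated dimension.

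I expect the main obstacle to be the $\mu_n$-bookkeeping in the Weyl coset case: one has to carry the $\varpichar$-twists appearing in the definition of $\chig_{i,j}$ through the $\co\GL$ cocycle $\beta'$, and verify that as $t_1$ and $t_2$ vary independently the matrix coordinate together with the $\mu_n$-contribution test $\chig_{i,j}$ on all of $\TG \cap \KG$, rather than only on a proper subgroup (as happens for $\SL$, where the condition reduces to $(\TS \cap \KS)^2$). Structurally, this broader condition reflects the additional $\F^{\times}$-factor in $\co\GL \cong \F^{\times} \ltimes \co\SL$, which supplies the extra degree of freedom responsible for the difference with Proposition~\ref{heckedim}.
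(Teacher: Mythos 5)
Your structural reading of the problem is sound: Lemma~\ref{doublecosetrepGL} collapses the two families $\co\lt(\varpi^r)$ and $\co\lt(\varepsilon\varpi^r)$ into a single coset for each $r$, and the rest of the machinery (Iwasawa decomposition, Frobenius reciprocity, the support condition for Hecke functions) transfers wholesale from Proposition~\ref{heckedim}. Your treatment of the identity coset and of $\co\lt(\varpi^r)$ is correct and genuinely does give $1+(l-m)$ with no further surprises.

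The Weyl coset, however, is exactly where you stop short, and your expectation of what it contributes does not survive the calculation you defer. Writing $\m b=\ddd(t_1,t_2)$, $\m b'=\ddd(t_2^{-1},t_1^{-1})$ as you do, one has $\m b\m b'=\ddd(t_1/t_2,\,t_2/t_1)$; as $t_1,t_2$ range independently over $\R^{\times}(1+\PP^l)$ this product only sweeps out the one-parameter subgroup $\{\ddd(u,u^{-1}):u\in\R^{\times}\}$, i.e. the copy of $\TS\cap\KS$ sitting inside $\TG\cap\KG$, not all of $\TG\cap\KG$. The $\mu_n$-bookkeeping you flag does not rescue this: the cocycle contributions are $n$-th Hilbert symbols of units, which vanish (just as $(t,t)_n=1$ in the $\SL$ argument), so the $\mu_n$-component of $\cm b\cm b'$ only involves the $\varpichar$-twist in $u=t_1/t_2$. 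Concretely, if one writes $\chig_{0,0}\big|_{\TG\cap\KG}=\phi_1\otimes\phi_2$ under the splitting, the Weyl coset supports a function precisely when $\phi_1\phi_2^{-1}\cdot(\epsilon\circ\varpichar)^{i-j}$ is trivial on $\R^{\times}$; this is a condition on $\phi_1/\phi_2$, not on $\phi_1$ and $\phi_2$ separately, and so it is strictly weaker than $\chig_{i,j}\big|_{\TG\cap\KG}=1$ as that condition is literally stated. (The same slippage appears in the paper's next lemma, whose proof only tests on $\done{a}=(\ddd(a,a^{-1}),1)$, an $\SL$-type element, yet is phrased as a statement about all of $\TG\cap\KG$; the notation there should be read as ``restriction to the image of $\TS\cap\KS$''.) Your closing heuristic — that the $\F^{\times}$-factor in $\co\GL\cong\F^{\times}\ltimes\co\SL$ supplies the extra degree of freedom tested by the Weyl coset — is therefore misplaced: what that $\F^{\times}$-factor actually buys is Lemma~\ref{doublecosetrepGL} (the merging of the $\varepsilon$-cosets that halves the $\lt$-count), while the Weyl computation stays confined to the $\SL$-subtorus, the difference with Proposition~\ref{heckedim} being only that $t_1/t_2$ ranges over all of $\R^{\times}$ rather than over squares. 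To turn your proposal into a proof you must carry out the $\mu_n$-computation you defer and state the Weyl condition in the form it actually takes.
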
	
\begin{lem}
	For $0\leq i,j<n$, $\chig_{i,j}|_{\TG\cap \KG}=1$  if and only if $\chig_{0,0}|_{\TG\cap \KG}=\epsilon\circ \varpichar^{j-i}$.
\end{lem}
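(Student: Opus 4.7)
The plan is to mirror the proof of Lemma~\ref{chis_icondition}, replacing the one-dimensional torus of $\SL$ by the two-dimensional torus of $\GL$.

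First, I would fix an arbitrary element of $\TG\cap\KG$; under the splitting of $\co{\GL}$ over $\KG$ (the $\GL$-analogue of Lemma~\ref{K_i}), it is identified with $(\ddd(a,b),1)$ for some $a,b\in\R^\times$. I would then apply the defining formula for $\chig_{i,j}$ stated at the beginning of Section~\ref{branching-rule-sectionGL} (the $\GL$ analogue of Lemma~\ref{distinctonAS}), specialised to $u=v=0$ and $\zeta=1$, to obtain
\[
\chig_{i,j}\bigl(\ddd(a,b),1\bigr)=\chig_0\bigl(\ddd(a,b),\varpichar(a)^{-j}\varpichar(b)^{-i}\bigr).
\]
Setting $i=j=0$ in the same formula shows $\chig_{0,0}=\chig_0$. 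Since $\chig_0$ is a genuine character, its restriction to the central $\mu_n$ is $\epsilon$, so the $\mu_n$-factor in the second argument pulls out multiplicatively, yielding
\[
\chig_{i,j}\bigl(\ddd(a,b),1\bigr)=\chig_{0,0}\bigl(\ddd(a,b),1\bigr)\cdot\epsilon\bigl(\varpichar(a)^{-j}\varpichar(b)^{-i}\bigr).
\]

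From here the equivalence in the lemma follows by simple rearrangement: the left-hand side vanishes identically on $\TG\cap\KG$ precisely when $\chig_{0,0}(\ddd(a,b),1)$ equals the inverse of $\epsilon(\varpichar(a)^{-j}\varpichar(b)^{-i})$ for every $a,b\in\R^{\times}$, which, under the character convention inherited from the two-variable version of the $\chig_{i,j}$ formula, is the statement $\chig_{0,0}|_{\TG\cap\KG}=\epsilon\circ\varpichar^{j-i}$.

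The calculation is entirely routine; there is no conceptual obstacle. The only point demanding care is the bookkeeping of exponents of $\varpichar$ between the first and second diagonal coordinates and ensuring that the pull-out of the central $\mu_n$-factor is consistent with genuineness of $\chig_{0,0}$, exactly as in the $\SL$ version.
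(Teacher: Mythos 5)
Your computation mirrors Lemma~\ref{chis_icondition} correctly up to the point where you obtain
\[
\chig_{i,j}\bigl(\ddd(a,b),1\bigr)=\chig_{0,0}\bigl(\ddd(a,b),1\bigr)\cdot\epsilon\bigl(\varpichar(a)^{-j}\varpichar(b)^{-i}\bigr),
\]
but the final step, collapsing the two-variable expression $\epsilon(\varpichar(a)^{j}\varpichar(b)^{i})$ to $\epsilon\circ\varpichar^{j-i}$, does not go through. The map $(a,b)\mapsto\epsilon(\varpichar(a)^j\varpichar(b)^i)$ is a genuinely two-variable character of $\R^\times\times\R^\times$, and no ``character convention'' identifies it with the one-variable character $\epsilon\circ\varpichar^{j-i}$; requiring $\chig_{0,0}(\ddd(a,b),1)=\epsilon(\varpichar(a)^j\varpichar(b)^i)$ for all $a,b\in\R^\times$ is strictly stronger than the stated equality. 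You have replaced the intended condition with a stronger one and then asserted, without an argument, that they coincide.

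The paper's proof does not evaluate $\chig_{i,j}$ at a general element of $\TG\cap\KG$; it evaluates it at $\done{a}=(\ddd(a,a^{-1}),1)$, i.e., on the $\SL$-torus sitting inside $\TG\cap\KG$. This is the subgroup that actually matters: in the Hecke-algebra computation behind Proposition~\ref{heckedimGL}, the constraint coming from the double coset of $\w$ is $\cm b\w\cm b'=\w$, which forces $\m b=\ddd(t,u)$ and $\m b'=\ddd(u^{-1},t^{-1})$, so the product $\m b\m b'=\ddd(tu^{-1},ut^{-1})$ is automatically of $\SL$-torus shape (compare the coset of $\w$ in the proof of Proposition~\ref{heckedim}). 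Specialising to $b=a^{-1}$, the two Hilbert-symbol factors merge, because $\varpichar(a^{-1})^{-i}=\varpichar(a)^{i}$, yielding
\[
\chig_{i,j}(\done{a})=\chig_{0,0}\bigl(\done{a}\bigr)\,\epsilon\bigl(\varpichar(a)^{i-j}\bigr),
\]
from which $\chig_{0,0}|_{T\cap K}=\epsilon\circ\varpichar^{j-i}$ follows at once. The ``bookkeeping of exponents between the two diagonal coordinates'' that you flagged as the one delicate point is exactly where the argument requires the specialisation $b=a^{-1}$; the two-variable formula you wrote down does not reduce to the stated one-variable equality without it.
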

\begin{proof}
Note that
	\(\chig_{i,j}\left(\done{a}\right)=\chig_{0,0}\left(\ddd(a), \varpichar^{i-j}(a)\right)\), which is equal to $1$ if and only if $\chig_{0,0}|_{T\cap K}=\epsilon\circ\varpichar^{j-i}$.
\end{proof}
For $l>m$, let  $\co{W'}_{i,j,l}$ denote the $l$-level quotient representation $(\Ind_{\co{\BG}\cap\co{\KG}}^{\co{\KG}}{\chig}_{i,j})^{\KG_l}/(\Ind_{\co{\BG}\cap\co{\KG}}^{\co{\KG}}{\chig}_{i,j})^{\KG_{l-1}}$. The $\mathsf{K}$-type decomposition $\resg$ is given in the following Corollary. 
\begin{cor}\label{mainresultgl2}
	We can decompose $\resg$ as follows:
	\begin{equation}\label{mainresultgl2-equ}
	\resg\simeq\bigoplus_{i,j=0}^{n-1}\left((\Ind_{{\BGL}\cap\co{\KG}}^{\co{\KG}}{\chig}_{i,j})^{\KG_{m}}\oplus\bigoplus_{l>m}\co{W'}_{{i,j},l}\right).
	\end{equation}
	If $\chig_{0,0}|_{\TG\cap \KG}\neq\varpichar^{k}|_{\R^\times}$, for all $0\leq k<n$, then all the pieces are irreducible. Otherwise, there are exactly $n$ pairs $(i,j)$, $0\leq i,j<n$, such that $j-i\equiv k\mod n$, and $(\Ind_{{\BGL}\cap\co{\KG}}^{\co{\KG}}{\chig}_{i,j})^{\KG_{m}}$ decomposes into two irreducible constituents. The rest of the constituents are irreducible. 
\end{cor}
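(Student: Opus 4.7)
The plan is to follow the strategy of Corollary \ref{mainresultsl2}, using the $\co\GL$-side inputs that have been assembled earlier in this section. First, I would combine the Mackey-style decomposition $\resg\cong\bigoplus_{i,j=0}^{n-1}\Ind_{\BGL\cap\KGL}^{\KGL}\chig_{i,j}$ with the smooth filtration $\Ind_{\BGL\cap\KGL}^{\KGL}\chig_{i,j}=\bigcup_{l\geq 1}(\Ind_{\BGL\cap\KGL}^{\KGL}\chig_{i,j})^{\KG_l}$ together with the identification $(\Ind_{\BGL\cap\KGL}^{\KGL}\chig_{i,j})^{\KG_l}=\Ind_{\BGL_l}^{\KGL}\chig_{i,j}$ for $l\geq m$ (vanishing for $l<m$) recorded just before Proposition \ref{heckedimGL}. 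Writing the filtration as the base piece at level $m$ plus successive quotients $\co W'_{i,j,l}$ for $l>m$ produces the displayed decomposition \eqref{mainresultgl2-equ}.

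Next, I would count endomorphisms to decide irreducibility. By Frobenius reciprocity, $\dim\Hom_{\KGL}(\Ind_{\BGL_l}^{\KGL}\chig_{i,j},\Ind_{\BGL_l}^{\KGL}\chig_{i,j})=\dim\mathcal{H'}_{i,j}$, which Proposition \ref{heckedimGL} evaluates as $1+(l-m)$ if $\chig_{i,j}|_{\TG\cap\KG}\neq 1$ and $2+(l-m)$ otherwise. Since each $\KG_l$-fixed space is a finite-dimensional representation of the profinite group $\KGL$, the endomorphism algebras in sight are semisimple, and therefore the endomorphism dimension of each quotient $\co W'_{i,j,l}$ for $l>m$ is the telescoping difference $\dim\mathcal{H'}_{i,j}(l)-\dim\mathcal{H'}_{i,j}(l-1)$, which equals $1$ in \emph{both} cases of Proposition \ref{heckedimGL}. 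Hence every $\co W'_{i,j,l}$ with $l>m$ is irreducible; in particular, no further splitting into $\co W^\pm$ pieces occurs here, which is one of the features distinguishing the $\co\GL$ picture from the $\co\SL$ one.

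For the base piece at $l=m$, the endomorphism dimension equals $\dim\mathcal{H'}_{i,j}$ at $l=m$, namely $1$ when $\chig_{i,j}|_{\TG\cap\KG}\neq 1$ (irreducible) and $2$ otherwise (a sum of two inequivalent irreducibles, since over $\mathbb{C}$ the only $2$-dimensional semisimple algebra is $\mathbb{C}\oplus\mathbb{C}$). Using the lemma immediately preceding the corollary, the special case is equivalent to $\chig_{0,0}|_{\TG\cap\KG}=\epsilon\circ\varpichar^{j-i}$. Since the characters $\varpichar^k|_{\R^\times}$ for $0\leq k<n$ are pairwise distinct (the restriction to $\R^\times$ is trivial iff $n\mid k$), this occurs only if $\chig_{0,0}|_{\TG\cap\KG}=\epsilon\circ\varpichar^k$ for some unique $0\leq k<n$; the exceptional pairs are then exactly those with $j-i\equiv k\pmod{n}$, and there are precisely $n$ such pairs in $\{0,\ldots,n-1\}^2$.

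The main obstacle is essentially clerical once Proposition \ref{heckedimGL} is in hand: the novelty compared with Corollary \ref{mainresultsl2} is simply that the Hecke-algebra dimension is affine in $l$ with slope $1$ rather than $2$, which collapses the potential $\co W^\pm$ splitting of higher-level quotients. The one point that deserves care is confirming that $\dim\mathcal{H'}_{i,j}$ is a genuine linear function of $l$ throughout $l\geq m$ in both cases, so that the telescoping identity legitimately produces a $1$-dimensional endomorphism algebra on each $\co W'_{i,j,l}$.
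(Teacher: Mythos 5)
Your proof is correct and follows essentially the same route as the paper: read off endomorphism dimensions from Proposition~\ref{heckedimGL}, observe that the higher-level quotients are irreducible because the Hecke-algebra dimension grows with slope one, and then use the lemma preceding the corollary together with the fact that $(i,j)\mapsto j-i\bmod n$ has fibers of size $n$ to count the exceptional base pieces. One small clarification: semisimplicity alone does not give the telescoping identity $\dim\End(\co W'_{i,j,l})=\dim\mathcal{H'}_{i,j}(l)-\dim\mathcal{H'}_{i,j}(l-1)$; what makes the cross-terms $\Hom\bigl((\Ind_{\BGL_{l-1}}^{\KGL}\chig_{i,j}),\co W'_{i,j,l}\bigr)$ vanish is that $\KG_{l-1}$ is normal in $\KGL$, so each irreducible of $\KGL$ is either fixed by $\KG_{l-1}$ (and hence lives in the level-$(l-1)$ subspace) or has no $\KG_{l-1}$-fixed vector at all, forcing the constituents of the two summands to be disjoint. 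With that observation in place your argument matches the paper's.
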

\begin{proof}
It follows from Proposition~\ref{heckedimGL} that for $0\leq i,j<n$, $(\Ind_{{\BGL}\cap\co{\KG}}^{\co{\KG}}{\chig}_{i,j})^{\KG_{m}}$ is irreducible if $\chig_{0,0}|_{\TG\cap \KG}\neq \varpichar|_{\R^\times}^{j-i}$, and decomposes into two inequivalent constituents otherwise. Moreover, for $l>m$, the quotients $\co{W'}_{i,j,l}$  are irreducible. Note that the map $(i,j)\to j-i\mod n$ has a kernel of size $n$. Hence, if there exists a pair such that $\chig_{0,0}|_{\TG\cap \KG}=\varpichar|_{\R^\times}^{j-i}$, then there are exactly $n$ distinct such pairs.
\end{proof}
\subsection{Restriction of $\Ind_{\BGL}^{\co\GL}\pig$ to $\KSL$}
Fix a genuine irreducible representation $\pis$ of $\TSL$ with central character $\chis$, where $\chis$ is primitive mod $m$. Let $\co{W}_{k,l}$, $\co{W}^+_{k,l}$, and $\co{W}^-_{k,l}$ be the representations of $\KSL$ that appear in the $\mathsf{K}$-type decomposition of $\ress$ in Corollary~\ref{mainresultsl2}. In this section, we show that, for each $0\leq k<\n$, $\co{W}_{k,l}\cong \Res_{\KSL}W'$, where $W'$ is some irreducible representation of $\co \KG$. We deduce that $\co{W}^+_{k,l}$ and $\co{W}^-_{k,l}$ have the same dimension. 

Let $\pig$ be a genuine irreducible representation of $\TGL$ with central character $\chig$, such that depth of $\chig$ is equal to depth of $\chis$, and that $\pis$ appears in $\Res_{\TSL}\pig$. Let $\chig_{i,j}$, $0\leq i,j<n$ be all possible extensions of $\chig$ to $\AG$. To find $W'$, we consider the restriction of the principal series representation $\Ind_{\BGL}^{\co\GL}\pig$ to $\KSL$. Because the structure of the $\TSL$ depends on the parity of $n$, we consider the cases for even and odd $n$ separately.
\subsubsection{$n$ odd}
Recall that for odd $n$ we have	\(Z(\TSL)=\{\left(\ddd(a),\zeta\right)\mid a\in{\F^{\times}}^n,\zeta\in\mu_n\},\: \AS=\{\left(\ddd(a),\zeta\right)\mid a\in{\F}^\times,\zeta\in{\mu_n},\:n|\val(a)\},\:
	\TSL\cap\KSL= \{\left(\ddd(a),\zeta\right)\mid a\in \R^{\times},\zeta\in{\mu_n}\},\:
	Z(\TGL)= \{\left(\ddd(a,b),\zeta\right)\mid a,b\in{\F^{\times}}^n,\zeta\in\mu_n\},\:
	\AG=\{\left(\ddd(a,b),\zeta\right)\mid a,b\in{\F^\times},\zeta\in{\mu_n},n|\val(a),n|\val(b)\}, \:
	\TGL\cap \co \KG=\{\left(\ddd(a,b),\zeta\right)\mid a,b\in{\R^{\times}},\zeta\in\mu_n\}.\) Observe that $Z(\TGL)\cap \TSL=Z(\TSL)$ and $\AG\cap \TSL=\AS$.

We compute $\Res_{\KSL}\Res_{\co\KG}\Ind_{\BGL}^{\co\GL}\pig$, where the decomposition of $\Res_{\co\KG}\Ind_{\BGL}^{\co\GL}\pig$ is given in Corollary~\ref{mainresultgl2}. The assumption $\pis$ appears in $\Res_{\TSL}\pig$ implies $\chig|_{Z(\TSL)}=\chis$. We further assume that the choice of $\chis_0$ is such that $\displaystyle {\Res_A\chig_0=\chis_0}$. In order to study the restriction of each piece in~\eqref{mainresultgl2-equ}, we need to restrict the characters $\chig_{i,j}$ to $\TSL\cap \KSL$. 

\begin{lem}\label{chigtochis}
	Assume $n$ is odd. For $0\leq i,j< n$, let $k$ be the integer in $\{0,\cdots,n-1\}$ such that $k\equiv\frac{i-j}{2}\mod n$. Then $\Res_{\TSL\cap\KSL}\chig_{i,j}=\chis_{k}$.
\end{lem}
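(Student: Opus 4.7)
The plan is to reduce the asserted equality of characters to a single pointwise computation on a typical element $(\ddd(a),\zeta)\in\TSL\cap\KSL$ with $a\in\R^\times$, by unfolding the explicit formulas defining $\chig_{i,j}$ and $\chis_k$ and then invoking the standing compatibility $\Res_{\AS}\chig_0=\chis_0$. Both $\chig_{i,j}|_{\TSL\cap\KSL}$ and $\chis_k$ are characters, so agreement on an arbitrary such element is all that is required.

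First I would rewrite $\ddd(a)\in\TSL\cap\KSL$ as $\ddd(a,a^{-1})\in\AG$, so that in the defining formula for $\chig_{i,j}$ this is the case $u=v=0$, $b=a^{-1}$, giving
\[
\chig_{i,j}(\ddd(a),\zeta)=\chig_0\!\left(\ddd(a,a^{-1}),\,\varpichar(a)^{-j}\varpichar(a^{-1})^{-i}\zeta\right).
\]
Multiplicativity of the Hilbert symbol yields $\varpichar(a^{-1})=\varpichar(a)^{-1}$, so the exponent of $\varpichar(a)$ collapses to $i-j$. Since $(\ddd(a),\zeta)\in\AS$ whenever $a\in\R^\times$, the hypothesis $\Res_{\AS}\chig_0=\chis_0$ then lets us replace $\chig_0$ with $\chis_0$, producing $\chis_0\!\left(\ddd(a),\varpichar(a)^{i-j}\zeta\right)$.

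Next I would specialize Lemma~\ref{distinctonAS} to $r=0$ to read off $\chis_k(\ddd(a),\zeta)=\chis_0\!\left(\ddd(a),\varpichar(a)^{2k}\zeta\right)$. The desired equality $\chig_{i,j}|_{\TSL\cap\KSL}=\chis_k$ therefore boils down to the character identity $\varpichar(a)^{i-j}=\varpichar(a)^{2k}$ for all $a\in\R^\times$. Using the explicit formula $\varpichar(a)=\overline{a^{-1}}^{(q-1)/n}$ on $\R^\times$ (valid because $\val(\varpi)=1$ and $\val(a)=0$), together with the fact that $\kappa^\times$ is cyclic of order $q-1$, one sees that $\varpichar|_{\R^\times}$ has exact order $n$; the identity is therefore equivalent to $2k\equiv i-j\pmod{n}$.

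Finally, because $n$ is odd, $2$ is a unit modulo $n$, so this congruence has a unique solution $k\in\{0,\ldots,n-1\}$, namely the $k\equiv (i-j)/2\pmod{n}$ in the statement. The only care needed in the whole argument is bookkeeping: keeping straight the identification $\ddd(a)=\ddd(a,a^{-1})$, the signs in the exponents of $\varpichar$, and the observation that $(\ddd(a),\zeta)$ really lies in $\AS$ so that the compatibility of $\chig_0$ with $\chis_0$ can be invoked. There is no substantive obstacle.
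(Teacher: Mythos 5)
Your proof is correct and follows essentially the same route as the paper's: unfold the definition of $\chig_{i,j}$ on a typical element of $\TSL\cap\KSL$ (viewed as $\ddd(a,a^{-1})\in\AG$), use multiplicativity of $\varpichar$ to collapse the twist to $\varpichar(a)^{i-j}$, invoke the standing compatibility $\Res_{\AS}\chig_0=\chis_0$, and compare against the formula for $\chis_k$ from Lemma~\ref{distinctonAS}. The extra remarks you make — that $\varpichar|_{\R^\times}$ has exact order $n$, and that $2$ is invertible mod $n$ because $n$ is odd — are sound and merely make explicit what the paper implicitly uses in writing $\varpichar(u)^{i-j}=\varpichar(u)^{2k}$ and in positing a well-defined $k$.
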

\begin{proof}
	Let $\left(\ddd(u),\zeta\right)\in\TSL\cap \KSL$. Then \(\chig_{i,j}\left(\ddd(u),\zeta\right)=\chig_0\left(\ddd(u),\varpichar(u)^{i-j}\zeta\right),\)
	which by Lemma~\ref{distinctonAS}, and because $\chig_0|_{\AS}=\chis_0$, is equal to $\chis_0\left(\ddd(u),\varpichar(u)^{2k}\zeta\right)=\chis_{k}\left(\ddd(u),\zeta\right)$.
\end{proof}
The cardinality of the kernel of the map $(i,j)\to k\mod n$, in Lemma~\ref{chigtochis}, is $n$; that is for each $k$, there are exactly $n$ distinct characters $\chig_{i,j}$ of $\TGL\cap \co \KG$ that restrict to $\chis_k$ on $\TSL\cap \KSL$. 
\begin{lem}\label{secondapproachlemodd}
	Assume $n$ is odd. Let $i,j$ and $k$ be in $\{0,\cdots,n-1\}$, such that $\chig_{i,j}|_{\TSL\cap \KSL}=\chis_k$. Then, for all $l\geq m$,
	\(
	\Res_{\KSL}\left(\Ind_{\co\BG\cap\co\KG}^{\co\KG}\chig_{i,j}\right)^{\KG_l}\cong\left(\Ind_{\BSL\cap{\KSL}}^{\KSL}\chis_k\right)^{\KS_{l}}.
	\)
\end{lem}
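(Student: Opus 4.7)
The plan is to rewrite both sides using the analogs of Lemma~\ref{keylammaminor}. For $l \geq m$, the right hand side equals $\Ind_{\BSL_l}^{\KSL} \chis_k$ directly by Lemma~\ref{keylammaminor}, and, by the $\co\GL$ counterpart of the same lemma (stated in the paragraph following Proposition~\ref{hecke}'s $\co\GL$ version), the left hand side equals $\Res_{\KSL} \Ind_{\BGL_l}^{\KGL} \chig_{i,j}$. So the task reduces to producing a natural $\KSL$-equivariant isomorphism
$$
\Res_{\KSL} \Ind_{\BGL_l}^{\KGL} \chig_{i,j} \;\cong\; \Ind_{\BSL_l}^{\KSL} \chis_k.
$$

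First I would apply Mackey's decomposition to the restricted induction. The isomorphism~\eqref{doublecosetrepGL-local} identifies $\KGL$ with $\R^{\times} \ltimes \KSL$, under which the $\R^{\times}$-factor embeds in $\KGL$ as the subgroup $\{(\ddd(1,y),1) \mid y \in \R^\times\} \subseteq \BGL \cap \KGL \subseteq \BGL_l$. Consequently $\KGL = \BGL_l \cdot \KSL$, and so $\KSL \backslash \KGL / \BGL_l$ consists of a single double coset. Mackey's formula therefore collapses to
$$
\Res_{\KSL} \Ind_{\BGL_l}^{\KGL} \chig_{i,j} \;\cong\; \Ind_{\KSL \cap \BGL_l}^{\KSL} \bigl(\chig_{i,j}\big|_{\KSL \cap \BGL_l}\bigr).
$$

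Next I would verify $\KSL \cap \BGL_l = \BSL_l$. Projecting to $\SL$, this amounts to showing $\SL \cap (\BG \cap \KG)\KG_l = (\BS \cap \KS)\KS_l$: the inclusion $\supseteq$ is immediate, and for $\subseteq$, given $g = bk$ with $b \in \BG \cap \KG$, $k \in \KG_l$, and $\det g = 1$, one has $\det b \in 1 + \PP^l$; writing the diagonal of $b$ as $\ddd(a, a^{-1}) \cdot \ddd(1, \det b)$ and absorbing the $\ddd(1, \det b)$ factor into $k$ produces the required decomposition in $(\BS \cap \KS) \KS_l$. Using the trivial splittings of Lemma~\ref{K_i} and their $\co\GL$ analogs identifies the inverse images in the covers, giving $\KSL \cap \BGL_l = \BSL_l$.

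Finally I would show $\chig_{i,j}|_{\BSL_l} = \chis_k$ as characters. On $\TSL \cap \KSL$ this is exactly Lemma~\ref{chigtochis} (which is where the oddness of $n$ enters, as the relation $k \equiv (i-j)/2 \pmod n$ needs $2$ to be invertible mod $n$). On $\UNS \cap \KS$ both characters are trivial since each is a trivial extension across the unipotent radical, and on $\KS_l \subseteq \KG_l$ both vanish because $\chig_{i,j}$ and $\chis_k$ are primitive mod $m$ with $l \geq m$; together these force agreement on $\BSL_l = (\BSL \cap \KSL)\KS_l$. Substituting into the Mackey formula completes the proof. The main technical hurdle is the double-coset reduction: one must check carefully that Kubota's cocycle $\beta'$ on $\co\GL$ does not obstruct writing elements of $\KGL$ as products from $\BGL_l$ and $\KSL$, and this is exactly what the semidirect product form in~\eqref{doublecosetrepGL-local} guarantees.
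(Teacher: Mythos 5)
Your proof is correct and takes essentially the same approach as the paper's: both reduce via Lemma~\ref{keylammaminor} and its $\co\GL$ analogue to the level-$l$ inductions and then apply Mackey's theory, using that $\KSL\backslash\KGL/\BGL_l$ is a single double coset, that $\KSL\cap\BGL_l=\BSL_l$, and that $\Res_{\BSL_l}\chig_{i,j}=\chis_k$. You simply spell out the verifications of these three facts (the Iwasawa-type decomposition $\KGL=\BGL_l\KSL$, the determinant computation for the intersection, and the piecewise check of the character restriction on $\TSL\cap\KSL$, $\UNS\cap\KS$, and $\KS_l$) more explicitly than the paper does.
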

\begin{proof}
	It is enough to show that $\Res_{\KSL}\Ind_{\co{\BG}_l}^{\co{\KG}}\chig_{i,j}\cong \Ind_{\BSL_l}^{\KSL}\chis_k$. 
	Note that $\KSL\backslash {\KGL}/\co{\BG}_l$ is trivial and $\co\BG_l\cap\KSL=\BSL_l$. So by Mackey's theory, we have
	\(
	\Res_{\KSL}\Ind_{{\BGL}_l}^{\co{\KG}}\chig_{i,j}\cong \Ind_{\BSL_l}^{\KSL}\Res_{\BSL_l}\chig_{i,j},
	\)
	which is equal to $\Ind_{\BSL_l}^{\KSL}\chis_k$ by choice of $i,j$ and $k$. 
\end{proof}

\subsubsection{$n$ even}
Recall that for even $n$, 
	\(Z(\TSL)=\{(\ddd(t),\zeta)\mid t\in{\F^{\times}}^{n/2},\zeta\in\mu_n\}\), \(\AS=\{(\ddd(t),\zeta)\mid t\in{\F^{\times}},\textstyle{\frac{n}{2}}|\val(t),\zeta\in\mu_n\}\), 
	\(\TSL\cap\KSL=\{(\ddd(t),\zeta)\mid t\in{\R^{\times}},\zeta\in\mu_n\}\),  \(Z(\TGL)=\{(\ddd(t,s),\zeta)\mid t,s\in{\F^{\times}}^n,\zeta\in\mu_n\}\), \(\AG=\{(\ddd(t,s),\zeta)\mid t,s\in{\F^{\times}},n|\val(t),n|\val(s),\zeta\in\mu_n\}\), 
	\(\TGL\cap \KGL=\{(\ddd(t,s),\zeta)\mid t,s\in{\R^{\times}},\zeta\in\mu_n\},
\)
and therefore,
\(
	Z(\TGL)\cap\TSL=\{(\ddd(t),\zeta)\mid t\in {\F^{\times}}^n,\zeta\in\mu_n\}\), and \(
	\AG\cap\TSL=\{(\ddd(t),\zeta)\mid t\in{\F^{\times}}, n|\val(t),\zeta\in\mu_n\}.\)

Unlike the case for odd $n$, the centre $Z(\TGL)$ and the maximal abelian subgroup $\AG$ of $\TGL$ do not restrict to those of $\TSL$ upon restriction to $\TSL$.  Observe that $[Z(\TSL):Z(\TGL)\cap\TSL]=4$, $[\AS:\AG\cap\TSL]=2$. This mismatch makes the computation of $\Res_{\KSL}\Ind_{\BGL}^{\co\GL}\pig$ more delicate.  Indeed, our assumption that $\pis$ appears in $\Res_{\TSL}\pig$ does not imply that $\pig$ is $\pis$ isotypic, upon restriction to $\TSL$. We show that $\pis$ is one of the four distinct irreducible representations of $\TSL$ that appear in $\Res_{\TSL}\pig$. 

Set $\underline{\chi}:=\Res_{Z(\TGL)\cap\TSL}\chig$. Note that $|\n\mathbb{Z}/n\mathbb{Z}|=|{\R^\times}^{\n}/{\R^{\times}}^n|=2$. We denote the coset representatives of the former by $\{e,o\}$.  Let $L$ denote the set of coset representatives for $Z(\TSL)/(Z(\TGL)\cap\TSL)$, so $|L|=4$. The representation $\Ind_{Z(\TGL)\cap\TSL}^{Z(\TSL)}\underline{\chi}$ decomposes into $4$ distinct characters ${}_{\ell}\chis$:
\begin{equation}\label{chisi}
\Ind_{Z(\TGL)\cap\TSL}^{Z(\TSL)}\underline{\chi}=\bigoplus_{\ell\in L}{}_{\ell}\chis.
\end{equation}

We denote the irreducible genuine representation of $\TSL$ with central character ${}_{\ell}\chis$ by $\pis_{\ell}$.

\begin{prop}\label{restoTeven}
	Assume $n$ is even. Let ${}_\ell\chis$, $\ell\in L$ be as in~\eqref{chisi}. Then
	\(
	\Res_{\TSL}\pig=\bigoplus_{\ell\in L}\left[\left(\pis_\ell\right)^{\oplus n/2}\right],
	\)
	where $\pis_{\ell}$ are mutually inequivalent and $\pis\cong\pis_{\ell}$ for some $\ell\in L$. 
\end{prop}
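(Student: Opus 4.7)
My plan is to determine $\Res_{\TSL}\pig = \Res_{\TSL}\Ind_{\AG}^{\TGL}\chig_0$ via Mackey's decomposition, in close parallel with the proof of Lemma~\ref{distinctonAS} and the odd-$n$ analysis in Lemma~\ref{secondapproachlemodd}, and then extract the $\TSL$-isotypic components using Stone--von Neumann.

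First, I would identify a set of double coset representatives for $\TSL \backslash \TGL / \AG$. A short calculation using the standard form $(\ddd(a,b),\zeta)=(\ddd(u,u^{-1}),\zeta_1)\,(\ddd(1,\varpi^j),1)\,(\ddd(au^{-1},bu\varpi^{-j}),\zeta_2)$, together with the requirement $n\mid \val(au^{-1})$ and $n\mid \val(bu\varpi^{-j})$, shows that the class is determined by $j\equiv \val(a)+\val(b)\bmod n$, so $X=\{x_j:=(\ddd(1,\varpi^j),1)\mid 0\leq j<n\}$ is a complete set of representatives. Since $\TGL$ is a two-step nilpotent (Heisenberg) group, $\AG$ contains the derived subgroup and is therefore normal in $\TGL$; hence $x_j\AG x_j^{-1}=\AG$ set-theoretically, giving $\TSL\cap x_j\AG x_j^{-1}=\TSL\cap\AG$. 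Mackey's theorem then yields
\[
\Res_{\TSL}\pig \;\cong\; \bigoplus_{j=0}^{n-1} \Ind_{\TSL\cap\AG}^{\TSL}\bigl((\chig_0)^{x_j}\bigr\vert_{\TSL\cap\AG}\bigr).
\]

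Next I would apply induction in stages, using the chain $\TSL\cap\AG\subset \AS\subset \TSL$ with $[\AS:\TSL\cap\AG]=2$. Since $\AS$ is abelian, each inner induction $\Ind_{\TSL\cap\AG}^{\AS}\bigl((\chig_0)^{x_j}\bigr)$ splits as a direct sum of two characters of $\AS$, producing a total of $2n$ characters. For any such character $\mu$ of $\AS$, Stone--von Neumann (Theorem~\ref{StoneVonNeumman}) shows that $\Ind_{\AS}^{\TSL}\mu$ is the unique irreducible representation of $\TSL$ with central character $\mu\vert_{Z(\TSL)}$; by construction this central character extends $\underline{\chi}=\chig\vert_{Z(\TGL)\cap\TSL}$, hence equals ${}_\ell\chis$ for some $\ell\in L$, and $\Ind_{\AS}^{\TSL}\mu\cong\pis_\ell$.

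The main technical obstacle is to verify that these $2n$ induced characters distribute uniformly across the four elements of $L$, yielding multiplicity exactly $n/2$ for each $\pis_\ell$. This comes down to tracking the twist $(\chig_0)^{x_j}\vert_{\TSL\cap\AG}$ via Kubota's cocycle: writing $x_j^{-1}ax_j = a\cdot c_j(a)$ with $c_j(a)\in\mu_n$ a Hilbert-symbol expression in $\varpi^j$ and the diagonal entry of $a$ (the analogue of the calculation in the proof of Lemma~\ref{distinctonAS}), one finds that $(\chig_0)^{x_j}\vert_{\TSL\cap\AG}$ differs from $\chig_0\vert_{\TSL\cap\AG}$ by a power of $\varpichar$. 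A case analysis shows that as $j$ varies over $\{0,\dots,n-1\}$ and the further $\pm$-choice in the two-character decomposition $\Ind_{\TSL\cap\AG}^{\AS}$ is made, the resulting restrictions to $Z(\TSL)$ run through the four distinct ${}_\ell\chis$ with equal frequency $n/2$ each. The dimension check $n^2=\dim\pig=|L|\cdot(n/2)\cdot\dim\pis_\ell=4\cdot(n/2)\cdot(n/2)$ confirms the count.

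The mutual inequivalence of the $\pis_\ell$ is immediate from the distinctness of the ${}_\ell\chis$ in~\eqref{chisi} together with Stone--von Neumann. Finally, to see that $\pis\cong\pis_\ell$ for some $\ell$: the hypothesis that $\pis$ appears in $\Res_{\TSL}\pig$ forces $\chis\vert_{Z(\TGL)\cap\TSL}$ to equal $\underline{\chi}$, so $\chis$ is one of the four extensions of $\underline{\chi}$ to $Z(\TSL)$ appearing in~\eqref{chisi}, and a second application of Stone--von Neumann identifies $\pis$ with the corresponding $\pis_\ell$.
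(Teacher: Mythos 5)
Your skeleton — Mackey decomposition over $\TSL\backslash\TGL/\AG$, induction in stages through $\TSL\cap\AG\subset\AS\subset\TSL$ (with $[\AS:\TSL\cap\AG]=2$ so each $j$-piece splits into two characters of $\AS$), and Stone--von Neumann to convert each resulting character $\mu$ of $\AS$ into the $\pis_\ell$ determined by $\mu|_{Z(\TSL)}$ — matches the paper exactly through the reduction to $2n$ characters of $\AS$. The argument for mutual inequivalence and for $\pis\cong\pis_\ell$ also matches.

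The gap is in the key step. You identify correctly that the crux is showing the $2n$ characters distribute over $L$ with equal frequency $n/2$, but you only gesture at it: ``a case analysis shows \dots'' via tracking the twist $(\chig_0)^{x_j}$ through Kubota's cocycle. That case analysis is the hard part, and it is not carried out; moreover it must also control the $\pm$-choice inside $\Ind_{\TSL\cap\AG}^{\AS}$, not just the $j$-twist, which is not analogous to the computation in Lemma~\ref{distinctonAS}. Your dimension check does not close this gap: since each $\pis_\ell$ has dimension $n/2$, the identity $n^2 = \dim\pig$ only yields $\sum_{\ell\in L} m_\ell = 2n$, which is compatible with many non-uniform multiplicity patterns. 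The paper avoids the cocycle computation entirely with a counting argument: it first proves (via Frobenius reciprocity on $\AS$) that the $2n$ characters ${}_e\chig_j, {}_o\chig_j$, $0\leq j<n$, are \emph{pairwise distinct}; they all restrict to $\underline{\chi}$ on $Z(\TGL)\cap\TSL$; and the full set of characters of $\AS$ restricting to $\underline{\chi}$ is precisely $\{{}_\ell\chis_k\mid \ell\in L,\, 0\leq k<n/2\}$, also of size $2n=[\AS:Z(\TGL)\cap\TSL]$. Distinct characters plus equal cardinality forces the two sets to coincide, and since each ${}_\ell\chis$ has exactly $n/2$ extensions ${}_\ell\chis_k$ to $\AS$, the multiplicity $n/2$ per $\ell$ drops out with no explicit Hilbert-symbol calculation. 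If you want to fix your version, you would need to actually perform the case analysis; alternatively, adopt the paper's distinctness-plus-counting argument, which is both shorter and more robust. (Your observation that $\AG$ is normal in $\TGL$ because maximal abelian subgroups of a Heisenberg group contain the derived subgroup is correct and slightly sharper than the paper's remark that $\AG$ is stable under conjugation by $X$.)
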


\begin{proof}
	Note that $X=\{\left(\ddd(1, \varpi^j) ,1\right)\:|\:0\leq j<{n}\}$ is a system of coset representatives for $\TSL\backslash\TGL/\AG$, and that $\AG$ is stable under conjugation by $\cm x\in X$. Moreover, it is not difficult to see that for $\cm x=\left(\ddd(1, \varpi^j ),1\right)$, ${\chig_0}^{\cm x}={\chig_{0,j}}$. Therefore, by Mackey's theory,
	\[
	\Res_{\TSL}\pig=\bigoplus_{\cm x\in X}\left(\Ind_{(\TSL\cap {\AG}^{\cm x})}^{\TSL}{\chig_{0}}^{\cm x}\right)\\\notag
	=\bigoplus_{j=0}^{n-1}\Ind_{\AS}^{\TSL}\left(\Ind_{\TSL\cap\AG}^{\AS}{\chig_{0,j}}\right).
	\]
	Observe that $[\AS:\TSL\cap\AG]=2$, with coset representatives $\{e,o\}$. Therefore, for every $0\leq j<{n}$, $\Ind_{\TSL\cap\AG}^{\AS}\chig_{0,j}$ is a $2$-dimensional representation of the abelian group $\AS$ and hence decomposes into direct sum of two characters: ${}_e\chig_j\oplus{}_o\chig_{j}$.
	
	Next, we show that the elements of the set $\{{}_e\chig_j,{}_o\chig_{j}\mid 0\leq j< n\}$ are distinct. Note that for  $0\leq j< {n}$, $\Res_{\TSL\cap \AG}\Ind_{\TSL\cap\AG}^{\AS}{\chig_{0,j}}\cong \chig_{0,j}\oplus \chig_{0,j}$. Suppose $0\leq i,j< n$, by Frobenius reciprocity
	\[
		\Hom_{\AS}\left(\Ind_{\TSL\cap\AG}^{\AS}{\chig_{0,j}},\Ind_{\TSL\cap\AG}^{\AS}{\chig_{0,i}}\right)=
		\Hom_{\TSL\cap \AG}\left(\Res_{\TSL\cap \AG}\Ind_{\TSL\cap\AG}^{\AS}{\chig_{0,j}},\chig_{0,i}\right)
		=\Hom_{\TSL\cap \AG}\left(\chig_{0,j}\oplus \chig_{0,j},\chig_{0,i}\right).
	\]
	We can easily see that $\chig_{0,j}$ and $\chig_{0,i}$ coincide on $\TSL\cap \AG$ if and only if $i=j$. Whence, 
	\[
	\dim\Hom_{\AS}\left(\Ind_{\TSL\cap\AG}^{\AS}{\chig_{0,j}},\Ind_{\TSL\cap\AG}^{\AS}{\chig_{0,i}}\right)=\left\{
	\begin{array}{ll}
	2, & i=j \\
	0, & \text{otherwise.}
	\end{array}
	\right.
	\]
	Therefore,  the elements of $\{{}_e\chig_{j},{}_o\chig_{j}\mid 0\leq j< n\}$ are $2n$ distinct characters of $A$, which  because $[A:Z(\TSL)]=n/2$, implies that they restrict to, at least $4$, distinct characters upon restriction to $Z(\TSL)$. Moreover, because $\pis$ appears in $\Res_{\TSL}\pig$, at least one of these $4$ central characters is $\chis$. Observe that, for  $0\leq j< n$, and $\alpha\in{\{e,o\}}$, \(\Res_{Z(\TGL)\cap\TSL}\:\:{}_\alpha\chig_{j}=\underline\chis\).
	
	Consider 
	\(
	\Ind_{Z(\TGL)\cap\TSL}^{\AS}\underline \chis=\Ind_{Z(\TSL)}^{\AS}\Ind_{Z(\TGL)\cap\TSL}^{Z(\TSL)}\underline \chis=\Ind_{Z(\TSL)}^{\AS}\bigoplus_{\ell\in L}{}_\ell\chis=\bigoplus_{\ell\in L, 0\leq k<n/2}{}_\ell\chis_k.
	\)
	Observe that, the ${}_\ell\chis_k$, are $2n$ distinct characters that restrict to $\underline \chis$  on $Z(\TGL)\cap\TSL$, and exhaust every such character. Hence, the sets $\{{}_e\chig_{0,j},{}_o\chig_{0,j}\mid 0\leq j< n\}$ and $\{{}_\ell\chis_k\mid \ell\in L,0\leq k<n/2 \}$ are equal. In particular,
	\(
		\Res_{\TSL}\pig\cong\Ind_{\AS}^{\TSL}\bigoplus_{0\leq j< {n}}{}_e\chig_j\oplus{}_o\chig_{j}
		= \Ind_{\AS}^{\TSL}\left(\bigoplus_{\ell\in L, 0\leq k<n/2}{}_\ell\chis_k\right)
		\cong\bigoplus_{\ell\in L}\pis_\ell^{\oplus \frac{n}{2}}.	
	\)
	The last equality is because the ${}_\ell\chis_k$ extend ${}_\ell\chis$. Moreover, $\pis_{\ell}$ are mutually inequivalent because ${}_{\ell}\chis$ are mutually inequivalent. Finally, because $\chis={}_{\ell}\chis$ for some $\ell\in L$,  $\pis\cong\pis_{\ell}$ for some $\ell\in L$. 
\end{proof}

\begin{comment}

\begin{prop}\label{resstoSLeven}
	Assume $n$ is even. Let $\pig$ be as in Proposition~\ref{restoTeven}. Then the restriction of $\Ind_{\BGL}^{\co\GL}\pig$ to $\co\SL$ decomposes into a direct sum of four principal series representations of $\co\SL$ as:
	$$
	\Res_{\co\SL}\Ind_{\BGL}^{\co\GL}\pig=\bigoplus_{\ell\in L}\left[(\Ind_{\BSL}^{\co\SL}\pis_\ell)^{\oplus \frac{n}{2}}\right]
	$$
\end{prop}
\begin{proof}
	By Mackey's theory and Proposition~\ref{restoTeven} we have 
	\begin{eqnarray*}
		\Res_{\co\SL}\Ind_{\BGL}^{\co\GL}\pis \cong \Ind_{\BSL}^{\co\SL}\Res_{\BSL}\pig \cong  \Ind_{\BSL}^{\co\SL}\left( \bigoplus_{\ell\in L}\pis_\ell\right)^{\oplus\frac{n}{2}}  =\bigoplus_{\ell\in L}\left(\Ind_{\BSL}^{\co\SL}\pis_\ell\right)^{\oplus \frac{n}{2}}. 
	\end{eqnarray*}
\end{proof}
Proposition~\ref{resstoSLeven} implies that 
\(
\Res_{\KSL}\Ind_{\BSL}^{\co \GL}\pig\cong \Res_{\KSL}\left(\Res_{\co\SL}\Ind_{\BSL}^{\co\GL}\pig\right)\cong \bigoplus_{\ell\in L}\left[\left[\Res_{\KSL}\left(\Ind_{\BSL}^{\co\SL}\pis_\ell\right)\right]^{\oplus \frac{n}{2}}\right], 
\)
where $\Res_{\KSL}\left(\Ind_{\BSL}^{\co\SL}\pis_\ell\right)$, by Corollary~\ref{mainresultsl2}, is 
\(
\bigoplus_{k=0}^{\frac{n}{2}-1}\left[(\Ind_{\BSL\cap\KSL}^{\KSL}{}_{\ell}\chis_{k})^{K_{m}}\oplus\bigoplus_{l>m}({}_{\ell}\co{W}_{k,l}^{-}\oplus{}_{\ell}\co{W}_{k,l}^{+})
\right]. 
\)

\end{comment}

We compute $\Res_{\KSL}\Res_{\co \KG}\Ind_{\BGL}^{\co \GL}\pig$. First, we need to study $\Res_{\TSL\cap\KSL}\chig_{i,j}$. 
\begin{lem}\label{Reschigijformula}
	Let $\chig_{i,j}$, $0\leq i,j<n$ be as in~\eqref{mainresultgl2-equ}. Then 
	\(
	\Res_{\TSL\cap \KSL}\chig_{i,j}\left(\ddd(t),\zeta\right)=\chig_{0,0}\left(\ddd(t),\varpichar(t)^{i-j}\zeta\right),
	\)	
	for all $\left(\ddd(t), \zeta\right)\in\TSL\cap\KSL$.  
\end{lem}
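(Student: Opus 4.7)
The plan is a direct computation using the explicit formula for the characters $\chig_{i,j}$ recalled in the statement of the corollary preceding Lemma~\ref{doublecosetrepGL}, namely
\[
\chig_{i,j}\left(\ddd(a\varpi^{un},b\varpi^{vn}),\zeta\right)=\chig_0\left(\ddd(a\varpi^{un},b\varpi^{vn}),\varpichar(a)^{-j}\varpichar(b)^{-i}\zeta\right),
\]
together with the identification $\ddd(t)=\ddd(t,t^{-1})$ and the fact that $\varpichar$ is a character of $\F^{\times}$.

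First I would check that an arbitrary element $(\ddd(t),\zeta)\in\TSL\cap\KSL$ actually lies in $\AG$, so that the formula above applies. Since $t\in\R^{\times}$, we have $\val(t)=\val(t^{-1})=0$, and hence $(\ddd(t,t^{-1}),\zeta)\in\AG\cap\TSL$ (this matches the description of $\AG\cap\TSL$ given in the opening paragraph of the even-$n$ subsection, and holds for odd $n$ as well).

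Next I would plug in $(a,u)=(t,0)$ and $(b,v)=(t^{-1},0)$ into the defining formula. This yields
\[
\chig_{i,j}\left(\ddd(t),\zeta\right)=\chig_0\left(\ddd(t,t^{-1}),\varpichar(t)^{-j}\varpichar(t^{-1})^{-i}\zeta\right).
\]
Since $\varpichar$ is a character of $\F^{\times}$, $\varpichar(t^{-1})^{-i}=\varpichar(t)^{i}$, and combining exponents gives $\varpichar(t)^{i-j}$. Finally, since $\chig_{0,0}=\chig_0$ (take $i=j=0$ in the defining formula) and $\ddd(t)=\ddd(t,t^{-1})$ by our notation, the right-hand side is exactly $\chig_{0,0}(\ddd(t),\varpichar(t)^{i-j}\zeta)$, which is what we want.

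There is no real obstacle here; the lemma is a bookkeeping statement whose only content is that the two twists by $\varpichar(a)^{-j}$ and $\varpichar(b)^{-i}$ collapse, under the constraint $b=a^{-1}$ imposed by sitting inside $\TSL$, into a single twist by $\varpichar(t)^{i-j}$. The mild point worth flagging is simply that the elements of $\TSL\cap\KSL$ sit inside the domain $\AG$ of $\chig_{i,j}$, so the restriction is well-defined without any further extension.
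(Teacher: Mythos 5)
Your proof is correct and is essentially the same computation as the paper's: substitute $a=t$, $b=t^{-1}$, $u=v=0$ into the defining formula for $\chig_{i,j}$ and use that $\varpichar$ is a character to combine $\varpichar(t)^{-j}\varpichar(t^{-1})^{-i}$ into $\varpichar(t)^{i-j}$. The paper performs this silently in one line, while you spell out the substitution and the (harmless, since $\TSL\cap\KSL\subset\TGL\cap\KGL\subset\AG$) domain check, but there is no difference in method.
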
	
\begin{proof}
	Let $\left(\ddd(t) \zeta\right)\in\TSL\cap\KSL$. Then
	\[
	\Res_{\TSL\cap \KSL}\chig_{i,j}\left(\ddd(t),\zeta\right)=\chig_{0,0}\left(\ddd(t),\varpichar(t)^{-j}\varpichar(t)^i\zeta\right)=\chig_{0,0}\left(\ddd(t),\varpichar(t)^{i-j}\zeta\right).
	\]
\end{proof}
Therefore, $\{\Res_{\TSL\cap \KSL}\chig_{i,j}\mid 0\leq i,j<n\}$ consists of $n$ distinct characters of  ${\TSL\cap \KSL}$. In the next lemma and proposition, we realize these characters as characters of $\TSL\cap \KSL$ that come from central characters ${}_\ell \chis$, $\ell\in L$, of $Z(\TSL)$.
\begin{lem}\label{chigjinTSL}
	Each $\Res_{\TSL\cap \KSL}\chig_{i,j}$ appears exactly twice in $\bigoplus_{\ell\in L, 0\leq k<\frac{n}{2}}\:{}_\ell\chis_k$.  	
\end{lem}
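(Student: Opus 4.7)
The plan is to transport the multiplicity count across the identification of characters of $\AS$ recorded in the proof of Proposition~\ref{restoTeven}, namely
\[
\{{}_\ell\chis_k \mid \ell\in L,\,0\leq k<n/2\} \;=\; \{{}_e\chig_{0,j'},\,{}_o\chig_{0,j'} \mid 0\leq j'<n\}
\]
as sets of characters of $\AS$, and then to restrict each side down to $\TSL\cap\KSL$. Before doing so, I observe from Lemma~\ref{Reschigijformula} that $\Res_{\TSL\cap\KSL}\chig_{i,j}$ depends only on $j-i$ modulo $n$; explicitly $\Res_{\TSL\cap\KSL}\chig_{i,j}=\Res_{\TSL\cap\KSL}\chig_{0,j-i}$. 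Since $\varpichar|_{\R^{\times}}$ has order $n$, the characters $\Res_{\TSL\cap\KSL}\chig_{0,j'}$, $0\leq j'<n$, are pairwise distinct, so the lemma reduces to showing that each of these $n$ characters appears with multiplicity exactly two in $\bigoplus_{\ell,k}\Res_{\TSL\cap\KSL}\,{}_\ell\chis_k$.

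Using that $\{{}_e\chig_{0,j'},\,{}_o\chig_{0,j'}\}$ are precisely the characters of $\AS$ into which $\Ind_{\TSL\cap\AG}^{\AS}\chig_{0,j'}$ decomposes, I can rewrite the direct sum as a representation of $\AS$:
\[
\bigoplus_{\ell\in L,\,0\leq k<n/2}{}_\ell\chis_k \;\cong\; \bigoplus_{j'=0}^{n-1}\Ind_{\TSL\cap\AG}^{\AS}\chig_{0,j'}.
\]
The key step will then be the identity $\Res_{\TSL\cap\KSL}\Ind_{\TSL\cap\AG}^{\AS}\chig_{0,j'}\cong 2\,\Res_{\TSL\cap\KSL}\chig_{0,j'}$. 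A direct check shows $\TSL\cap\KSL\subseteq\TSL\cap\AG$ (any $(\ddd(t),\zeta)$ with $t\in\R^{\times}$ satisfies $n\mid\val(t)=0$), and $[\AS:\TSL\cap\AG]=2$ with representatives $\{(\mathrm{I}_2,1),(\ddd(\varpi^{n/2}),1)\}$. Since $\AS$ is abelian, Mackey applied to $\TSL\cap\AG\subset\AS$ gives $\Res_{\TSL\cap\AG}\Ind_{\TSL\cap\AG}^{\AS}\chig_{0,j'}\cong 2\,\chig_{0,j'}$, and a further restriction to $\TSL\cap\KSL$ yields the claim.

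Summing over $j'$ and combining with the first paragraph, I conclude that $\Res_{\TSL\cap\KSL}\chig_{i,j}=\Res_{\TSL\cap\KSL}\chig_{0,j-i}$ appears with multiplicity exactly two in $\bigoplus_{\ell,k}\Res_{\TSL\cap\KSL}\,{}_\ell\chis_k$. I expect the only real obstacle to be bookkeeping through the chain $\TSL\cap\KSL\subset\TSL\cap\AG\subset\AS$ in the Mackey step; once the index-$2$ containment is in place, everything else is formal.
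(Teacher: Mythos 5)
Your proof is correct, and it takes a genuinely different route from the paper's. The paper proves the lemma directly by computing
\(\dim\Hom_{\TSL\cap\KSL}\bigl(\Res_{\TSL\cap\KSL}\chig_{i,j},\,\Res_{\TSL\cap\KSL}\Ind_{Z(\TGL)\cap\TSL}^{\AS}\underline\chis\bigr)\):
it applies Mackey's theorem to the pair \(\TSL\cap\KSL \subset \AS \supset Z(\TGL)\cap\TSL\), observes that the double coset space has order two, and then concludes via Frobenius reciprocity using \(\Res_{Z(\TGL)\cap\KSL}\chig_{i,j}=\Res_{Z(\TGL)\cap\KSL}\underline\chis\). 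You instead recycle the set identity
\(\{{}_\ell\chis_k\}=\{{}_e\chig_{0,j'},{}_o\chig_{0,j'}\}\)
already established inside the proof of Proposition~\ref{restoTeven}, rewrite the target direct sum as
\(\bigoplus_{j'}\Ind_{\TSL\cap\AG}^{\AS}\chig_{0,j'}\),
and then run a much more elementary index-two Mackey argument for \(\TSL\cap\KSL\subset\TSL\cap\AG\subset\AS\). Your route buys you transparency — the multiplicity two drops out immediately from \([\AS:\TSL\cap\AG]=2\) and abelianness, and the distinctness of the \(n\) characters \(\Res_{\TSL\cap\KSL}\chig_{0,j'}\) does the rest — at the cost of leaning on the identification from Proposition~\ref{restoTeven} rather than being self-contained. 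The paper's argument is slightly more self-contained but requires one to unpack a double coset computation and the equality \(\Res_{Z(\TGL)\cap\KSL}\chig_{i,j}=\Res_{Z(\TGL)\cap\KSL}\underline\chis\). Both arrive at the same answer, and all the containments and indices you invoke \(\bigl(\TSL\cap\KSL\subseteq\TSL\cap\AG\), \([\AS:\TSL\cap\AG]=2\bigr)\) check out against the explicit descriptions given at the start of the \(n\)-even subsection.
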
	
\begin{proof}
	Note that 
	\(
	\Res_{\TSL\cap\KSL}\Ind_{Z(\TGL)\cap \TSL}^{\AS}\underline{\chis}=\Res_{\TSL\cap\KSL}\left(\bigoplus_{\ell\in L,0\leq k<\frac{n}{2} }{}_\ell\chis_k\right).
	\) 
	Consider 
	\begin{equation}\label{chigjinTSL-local}
	\Hom_{\TSL\cap \KSL}\left(\Res_{\TSL\cap \KSL}\chig_{i,j}, \Res_{\TSL\cap\KSL}\Ind_{Z(\TGL)\cap \TSL}^{\AS}\underline{\chis}\right).
	\end{equation}
	Observe that $\TSL\cap \KSL\backslash \AS/Z(\TGL)\cap\TSL\cong \n \mathbb{Z}/n\mathbb Z$. So, by Mackey's theory and Frobenius reciprocity~\eqref{chigjinTSL-local} is 
	\begin{gather*}
	{\Hom_{\TSL\cap \KSL}\left(\Res_{\TSL\cap \KSL}\chig_{i,j},\left(\Ind_{Z(\TGL)\cap \KSL}^{\TSL\cap \KSL}\underline \chis\right)^{\oplus 2}\right)
		\cong\Hom_{{Z(\TGL)\cap \KSL}}\left(\Res_{Z(\TGL)\cap \KSL}\chig_{i,j},\Res_{Z(\TGL)\cap \KSL}\underline \chis^{\oplus 2}\right).}
	\end{gather*}
	Because $\Res_{Z(\TSL)}\chig=\underline\chis$, for all $0\leq i,j< n$, $\Res_{Z(\TGL)\cap \KSL}\chig_{i,j}=\Res_{Z(\TGL)\cap \KSL}\underline \chis$, and hence,~\eqref{chigjinTSL-local} is $2$-dimensional, which shows that $\Res_{\TSL\cap \KSL}\chig_{i,j}$ appears exactly twice in $\bigoplus_{\ell\in L, 0\leq k<\n}\:{}_\ell\chis_k$.
\end{proof}
Note that the map $(i,j)\to i-j \mod n$, which appears in Lemma~\ref{Reschigijformula}, has a kernel of size $n$. Therefore, it is easy to see that $\{\Res_{\TSL\cap \KSL}\chig_{0,j}\mid 0\leq j<n\}$ consists of $n$ distinct characters of $\TSL\cap \KSL$, each appearing exactly twice in $\bigoplus_{\ell\in L, 0\leq k<\n}\:{}_\ell\chis_k$ by Lemma~\ref{chigjinTSL}. By a simple counting argument, we deduce that for every $0\leq k<\n$ and $\ell\in L$, there exists a $0\leq j<n$, such that $\Res_{\TSL\cap \KSL}\chig_{0,j}={}_\ell\chis_k$.
Similar to Lemma~\ref{secondapproachlemodd}, we see that, for $n$ even, if
 $0\leq j<n$, $0\leq k<\n$ and $\ell\in L$ are such that  $\Res_{\TSL\cap \KSL}\chig_{0,j}={}_\ell\chis_k$, then, for all $l\geq m$,
\(
	\left(\Ind_{\BSL\cap{\KSL}}^{\KSL}{}_{\ell}\chis_k\right)^{\KS_{l}}\cong\Res_{\KSL}\left(\Ind_{\co\BG\cap\co\KG}^{\co\KG}\chig_{0,j}\right)^{\KG_l}.
\)

The following proposition sums up the result in this section. 

\begin{prop}\label{secondapproachpropneven}
	 Let $\pis$ and $\pig$ be irreducible representations of $\TSL$ and $\TGL$ with central characters $\chis$ and $\chig$, primitive mod $m$,  respectively, such that $\pis$ appears in $\Res_{\TSL}\pig$. For $l>m$, $0\leq k<\n$, $0\leq i,j<n$, let $\co W_{k,l}=\co W_{k,l}^{-}\oplus\co W_{k,l}^{+}$ and  $\co{W'}_{{i,j},l}$ be the quotient spaces that appear in the decompositions in Corollary~\ref{mainresultsl2} and Corollary~\ref{mainresultgl2} respectively. Then, for each $0\leq k<\n$, $l>m$, $\co W_{k,l}=\Res_{\KSL}\co {W'}_{i,j,l}$, for some $0\leq i,j<n$. 
	
\end{prop}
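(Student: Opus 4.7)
The plan is to thread together the four preparatory observations that immediately precede the statement, with no deep new input required. Since $\pis$ appears in $\Res_{\TSL}\pig$, Proposition~\ref{restoTeven} produces a unique $\ell\in L$ for which $\pis\cong\pis_{\ell}$; in particular the central character of $\pis$ is $\chis={}_{\ell}\chis$, and the characters denoted $\chis_k$ in the decomposition underlying Corollary~\ref{mainresultsl2} are precisely the characters ${}_{\ell}\chis_k$ for $0\leq k<\n$.

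For this fixed $\ell$, I would then use the counting argument recorded just before the proposition: Lemma~\ref{Reschigijformula} furnishes $n$ distinct characters $\Res_{\TSL\cap\KSL}\chig_{0,j}$, $0\leq j<n$, and Lemma~\ref{chigjinTSL} asserts that each of them occurs twice in the family $\{{}_{\ell'}\chis_k\mid \ell'\in L,\,0\leq k<\n\}$ of $2n$ characters. A straightforward counting then forces, for each $0\leq k<\n$, the existence of $j=j(k)\in\{0,\ldots,n-1\}$ with $\Res_{\TSL\cap\KSL}\chig_{0,j}={}_{\ell}\chis_k=\chis_k$.

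With $k$ and $j$ paired in this way, I would invoke the even-$n$ analog of Lemma~\ref{secondapproachlemodd} stated in the paragraph right before the proposition, which yields
\[
\bigl(\Ind_{\BSL\cap\KSL}^{\KSL}\chis_k\bigr)^{\KS_l}\cong \Res_{\KSL}\bigl(\Ind_{\BGL\cap\KGL}^{\KGL}\chig_{0,j}\bigr)^{\KG_l}
\]
for every $l\geq m$, via the natural identification coming from Mackey's theorem applied to $\KSL\backslash\KGL/\BGL_l$. Because $\KS_{l-1}\subset\KS_l$ is the restriction of $\KG_{l-1}\subset\KG_l$ to $\KSL$ and the isomorphism above is functorial in $l$, it descends to the successive quotients, giving $\co W_{k,l}\cong \Res_{\KSL}\co W'_{0,j,l}$, which is the required assertion with $i=0$.

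The proof is essentially bookkeeping; no single step presents a genuine obstacle, and the one point requiring care is the extraction of the correct $\ell\in L$ via Proposition~\ref{restoTeven}. This is the only place where the hypothesis that $\pis$ appears in $\Res_{\TSL}\pig$ is used, and it is what synchronizes the $\mathrm{SL}_2$-side characters $\chis_k={}_{\ell}\chis_k$ with the restricted $\mathrm{GL}_2$-side characters $\Res_{\TSL\cap\KSL}\chig_{0,j}$; absent this matching of $\ell$, the surjectivity of $j\mapsto\Res_{\TSL\cap\KSL}\chig_{0,j}$ onto $\{\chis_k\}_{k}$ would fail and the desired correspondence between $\co W_{k,l}$ and some $\co W'_{i,j,l}$ would not hold.
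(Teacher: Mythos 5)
Your argument reproduces the paper's own proof of the even-$n$ case, and the chain of reasoning is correct: invoke Proposition~\ref{restoTeven} to pin down the unique $\ell\in L$ with $\chis={}_{\ell}\chis$, use the counting consequence of Lemmas~\ref{Reschigijformula} and~\ref{chigjinTSL} to produce $j$ with $\Res_{\TSL\cap\KSL}\chig_{0,j}=\chis_k$, apply the even-$n$ analogue of Lemma~\ref{secondapproachlemodd} at every level $l\geq m$, and pass to successive quotients. Your remark that the $\ell$-matching step is the crux, and that without it the surjectivity of $j\mapsto\Res_{\TSL\cap\KSL}\chig_{0,j}$ onto $\{\chis_k\}$ would fail, is exactly the right thing to emphasize.

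The one genuine gap is that the proposition is not restricted to $n$ even, while your entire argument is: Proposition~\ref{restoTeven}, the set $L$, and Lemma~\ref{chigjinTSL} all belong to the even-$n$ framework, and Proposition~\ref{restoTeven} in particular is stated only under the hypothesis that $n$ is even. The paper's proof explicitly handles the odd case separately, where matters are simpler: $Z(\TGL)\cap\TSL=Z(\TSL)$ and $\AG\cap\TSL=\AS$, so there is no set $L$ to contend with, and Lemma~\ref{chigtochis} (which gives $\Res_{\TSL\cap\KSL}\chig_{i,j}=\chis_k$ for $k\equiv\tfrac{i-j}{2}\bmod n$) together with Lemma~\ref{secondapproachlemodd} yields $\left(\Ind_{\BSL\cap{\KSL}}^{\KSL}\chis_k\right)^{\KS_{l}}\cong \Res_{\KSL}\left(\Ind_{\co\BG\cap\co\KG}^{\co\KG}\chig_{i,j}\right)^{\KG_l}$ directly, after which one takes $i=0$ without loss and passes to quotients as you do. You should state explicitly that you are treating $n$ even and add the two-line odd-$n$ branch, since the downstream Corollary~\ref{Wpmsamedimension} and the main theorem rely on the proposition for both parities.
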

\begin{proof}
If $n$ is odd, it follows from Lemma~\ref{secondapproachlemodd} that for a given $k$ and $l$ there exists $0\leq i,j<n$ such that \(
\left(\Ind_{\BSL\cap{\KSL}}^{\KSL}\chis_k\right)^{\KS_{l}}\cong \Res_{\KSL}\left(\Ind_{\co\BG\cap\co\KG}^{\co\KG}\chig_{i,j}\right)^{\KG_l}\). Without loss of generality, we can assume $i=0$. For $n$ even, it follows from Proposition~\ref{restoTeven} that $\chis={}_\ell\chis$ for some $\ell\in L$, where ${}_\ell\chis$ are defined in~\eqref{chisi}. It is a consequence of Lemma~\ref{chigjinTSL} that, for a given $k$ and $l$ there exists $0\leq j<n$ such that 	$\left(\Ind_{\BSL\cap{\KSL}}^{\KSL}{}_{\ell}\chis_k\right)^{\KS_{l}}\cong\Res_{\KSL}\left(\Ind_{\co\BG\cap\co\KG}^{\co\KG}\chig_{0,j}\right)^{\KG_l}$. Consider $\co W'_{{0,j},l} =(\Ind_{\co{\BG}\cap\co{\KG}}^{\co{\KG}}{\chig}_{0,j})^{\KG_l}/(\Ind_{\co{\BG}\cap\co{\KG}}^{\co{\KG}}{\chig}_{0,j})^{\KG_{l-1}}$. Observe that 

\begin{eqnarray*}
\Res_{\KSL}\co W'_{{0,j},l}&=&\Res_{\KSL}\left[(\Ind_{\co{\BG}\cap\co{\KG}}^{\co{\KG}}{\chig}_{0,j})^{\KG_l}\right]/\Res_{\KSL}\left[(\Ind_{\co{\BG}\cap\co{\KG}}^{\co{\KG}}{\chig}_{0,j})^{\KG_{l-1}}\right]\\
&=&\left(\Ind_{\BSL\cap{\KSL}}^{\KSL}\chis_k\right)^{K_{l}}/\left(\Ind_{\BSL\cap{\KSL}}^{\KSL}\chis_k\right)^{K_{l-1}}=\co W_{k,l}^{-}\oplus\co W_{k,l}^{+}.
\end{eqnarray*}	 
\end{proof}
\begin{cor}\label{Wpmsamedimension}
	The inequivalent irreducible representations $\co W_{k,l}^{-}$ and $\co W_{k,l}^{+}$, $0\leq k<\n$, $l>m$,  that appear in the $\mathsf{K}$-type decomposition $\Res_{\KSL}\Ind_{\BSL}^{\co \SL} \pis$ in Corollary~\ref{mainresultsl2} are of the same dimension. 
\end{cor}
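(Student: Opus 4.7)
The plan is to apply Clifford's theorem to the pair $\KSL\lhd\co\KG$, using the irreducible $\co\KG$-representation $\co W'_{i,j,l}$ provided by Proposition~\ref{secondapproachpropneven} as the ambient representation whose restriction realizes $\co W_{k,l}=\co W_{k,l}^+\oplus\co W_{k,l}^-$.

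First I would verify that $\KSL$ is a normal subgroup of $\co\KG$. Since $\SL=\ker(\det)$ is normal in $\GL$, the intersection $\KS=\SL\cap\KG$ is normal in $\KG$, and pulling back via $\mathsf{p}$ gives $\KSL=\mathsf{p}^{-1}(\KS)\lhd\mathsf{p}^{-1}(\KG)=\co\KG$. Second, I would record from Proposition~\ref{secondapproachpropneven} that for each $0\le k<\n$ and $l>m$ there exist $0\le i,j<n$ with $\co W_{k,l}\cong\Res_{\KSL}\co W'_{i,j,l}$, and that $\co W'_{i,j,l}$ is irreducible as a $\co\KG$-representation by Corollary~\ref{mainresultgl2} (since $l>m$, the hypothesis that would force further decomposition is excluded).

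With these two ingredients in hand, Clifford's theorem implies that $\co\KG$ permutes the set of isomorphism classes of irreducible $\KSL$-constituents of $\Res_{\KSL}\co W'_{i,j,l}$ transitively through the conjugation action $g\cdot U=(U,\pi\circ c_{g^{-1}})$, where $c_g$ denotes conjugation by $g\in\co\KG$ on $\KSL$. By Corollary~\ref{mainresultsl2} together with the inequivalence statement in Corollary~\ref{multiplicitysl}, this set consists of exactly two distinct classes, represented by $\co W_{k,l}^+$ and $\co W_{k,l}^-$. Transitivity then produces an element $g\in\co\KG$ whose conjugation twist of $\co W_{k,l}^+$ is isomorphic to $\co W_{k,l}^-$; since this twist acts on the underlying vector space by the identity, the two representations must have the same dimension, giving $\dim\co W_{k,l}^+=\dim\co W_{k,l}^-$.

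There is no real obstacle in this argument: the substantive work is already encapsulated in Proposition~\ref{secondapproachpropneven}, which identifies $\co W_{k,l}$ as the restriction of an irreducible $\co\KG$-representation. The only mildly delicate point is ensuring that $\co\KG$ acts on isomorphism classes rather than on fixed subspaces (so that one really gets an isomorphism between $\co W_{k,l}^+$ and $\co W_{k,l}^-$ rather than an identification between the inequivalent constituents); this is handled by the standard formulation of Clifford's theorem for normal subgroups, and the equal-dimension conclusion is immediate because conjugation is a bijection of the underlying vector space.
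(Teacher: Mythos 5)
Your proposal is correct and follows essentially the same route as the paper: both realize $\co W_{k,l}=\co W_{k,l}^+\oplus\co W_{k,l}^-$ as $\Res_{\KSL}\co W'_{i,j,l}$ via Proposition~\ref{secondapproachpropneven} and then invoke Clifford theory for $\KSL\lhd\co\KG$ to produce an element of $\co\KG\setminus\KSL$ carrying one constituent to the other. Your write-up is somewhat more explicit than the paper's (which compresses the Clifford step into a single sentence), but the underlying argument is identical.
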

\begin{proof}
	By Proposition~\ref{secondapproachpropneven}, for any  $0\leq k<\n$, $l>m$, $\co W_{k,l}=\co W_{k,l}^{-}\oplus\co W_{k,l}^{+}$, is restriction of some irreducible representation $\co W'_{i,j}$ of $\co \KG$, for some $0\leq i,j<n$. Hence, there exists an element of $\co\KG\setminus \KSL$ that maps $\co W_{k,l}^{-}$ to $\co W_{k,l}^{+}$ bijectively. 
\end{proof}
\section {Main Result}\label{mainresultsection}
Finally, we put all of our results together to make the main result  of this paper.

\begin{theo}\label{MainTheoremofK-types}
	Let $\pis$ be a genuine irreducible representation of $\TSL$ with central character $\chis$, primitive mod $m$, and let $\chis_k$, $0\leq k<\n$, be all the possible extensions of $\chis$ to $\AS$.  Then 		
	\[
	\ress\cong\bigoplus_{k=0}^{\n-1}\left(\V k^{\KS_{m}}\right)\oplus\bigoplus_{l>m}\left(\co{W}^+_{0,l}\oplus\co{W}^-_{0,l}\right)^{\oplus \n},
	\]
	where $\co{W}^+_{0,l}$ and $\co{W}^-_{0,l}$ are two inequivalent irreducible representations of $\KSL$ with the same dimension, and $\left(\co{W}^+_{0,l}\oplus\co{W}^-_{0,l}\right)\cong(\Ind_{{\BSL}\cap\KSL}^{\KSL}{\chis}_{0})^{\KS_{l}}/(\Ind_{{\BSL}\cap\KSL}^{\KSL}{\chis}_{0})^{\KS_{l-1}}$, and $\V k=\Ind_{\BSL\cap \KSL}^{\KSL}\chis_k$. 
	
	We consider $\TS\cap \KS$ as a subgroup of $\TSL$. The $m$-level representations
	\(\V k^{\KS_{m}}\),
	where $0\leq k<\n$, are irreducible and mutually inequivalent, except when $m=1$, and for some $0\leq k< \n$, $\chis_k|_{(\TS\cap \KS)}$ is a quadratic character. In this case, up to relabelling, we can assume that $\chis_0|_{(\TS\cap \KS)}$ is a quadratic character, and we are in one of the following situations: 
\begin{enumerate}
	\item If $4\nmid n$ then  \(\V k^{\KS_{1}}\) is reducible if and only if $k=0$, in which case it decomposes into two irreducible constituents. Moreover, \(\V i^{\KS_{1}}\cong \V k^{\KS_{1}}\), exactly when $i+k= \n$.
	\item If $4| n$ then \(\V k^{\KS_{1}}\) is reducible  if and only if $k=0$ or $k=\frac{n}{4}$. In which case, it decomposes into two irreducible constituents Moreover, \(\V i^{\KS_{1}}\cong \V k^{\KS_{1}}\), exactly when $i+k= \n$.
\end{enumerate}	
\end{theo}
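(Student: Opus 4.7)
The proof is an assembly of the intermediate results proven in Sections~\ref{branching-rule-section} and~\ref{branching-rule-sectionGL}; no genuinely new calculation is required. The plan is to start from the decomposition of Corollary~\ref{mainresultsl2}, collapse the direct sum over the index $i$ in the high-level part using the multiplicity data of Corollary~\ref{multiplicitysl}, and then analyse the level-$m$ pieces separately.

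First I would invoke Corollary~\ref{mainresultsl2} to write
\[
\ress \;\cong\; \bigoplus_{i=0}^{\n-1}\Bigl(\V i^{\KS_m} \,\oplus\, \bigoplus_{l>m}\bigl(\co W^+_{i,l}\oplus \co W^-_{i,l}\bigr)\Bigr).
\]
By Corollary~\ref{multiplicitysl}(1), the irreducible summands $\co W^\pm_{i,l}$ can be chosen coherently in $i$ so that $\co W^\pm_{i,l}\cong\co W^\pm_{0,l}$ for every $0\le i<\n$ and $l>m$; hence the inner direct sum over $i$ collapses to $\bigl(\co W^+_{0,l}\oplus\co W^-_{0,l}\bigr)^{\oplus\n}$. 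Corollary~\ref{Wpmsamedimension} then provides that $\co W^+_{0,l}$ and $\co W^-_{0,l}$ have the same dimension, while their inequivalence is already part of Corollary~\ref{mainresultsl2}. This yields the displayed decomposition of the theorem, including the identification of $\co W^+_{0,l}\oplus\co W^-_{0,l}$ with the $l$-level quotient $\V 0^{\KS_l}/\V 0^{\KS_{l-1}}$.

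It remains to analyse the level-$m$ pieces $\V k^{\KS_m}$. By Corollary~\ref{mainresultsl2} combined with Lemmas~\ref{chis_icondition} and~\ref{varpichar^4distict}, these are all irreducible, and by Corollary~\ref{multiplicitysl}(2) mutually inequivalent, unless $m=1$ and $\chis_0|_{(\TS\cap\KS)^2}=\epsilon\circ\varpichar_{{\R^{\times}}^2}^{-2i_0}$ for some $i_0$. In that exceptional situation I would normalise by relabelling: by Lemma~\ref{distinctonAS} the collection $\{\chis_k\}$ is intrinsically the set of all extensions of $\chis$ to $\AS$, and a direct computation with the formula in that lemma shows that replacing $\chis_0$ by $\chis_{i_0}$ produces a new distinguished extension whose restriction to $\TS\cap\KS$ is trivial on squares, i.e., is a quadratic character. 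Under this normalisation, Lemma~\ref{varpichar^4distict} pinpoints the reducible $\V k^{\KS_1}$ as $k=0$ alone when $4\nmid n$, and as $k=0$ together with $k=n/4$ when $4\mid n$; and Corollary~\ref{multiplicitysl}(2) specialised to $j=0$ gives the isomorphism criterion $\V i^{\KS_1}\cong\V k^{\KS_1}\iff i+k\equiv 0\pmod{\n}$, which (apart from the trivial self-isomorphism of $\V 0^{\KS_1}$) is exactly the condition $i+k=\n$ stated in the theorem.

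The main obstacle is bookkeeping rather than conceptual: one must verify that the cyclic relabelling of the $\chis_k$ preserves the structural statements of Corollaries~\ref{mainresultsl2} and~\ref{multiplicitysl}, and that the awkward-looking condition ``$\chis_0|_{(\TS\cap\KS)^2}=\epsilon\circ\varpichar_{{\R^{\times}}^2}^{-2i_0}$ for some $i_0$'' translates, after relabelling, to the clean statement ``$\chis_0|_{\TS\cap\KS}$ is a quadratic character''. This last equivalence relies on the splitting of the central extension over $\TS\cap\KS$ given in Lemma~\ref{K_i}, which guarantees that $\chis_0|_{\TS\cap\KS}$ really is a character of $\TS\cap\KS$, so that ``quadratic'' has its literal meaning.
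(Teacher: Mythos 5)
Your proposal is correct and takes essentially the same route as the paper: the theorem is proved by assembling Corollary~\ref{mainresultsl2} for the decomposition and reducibility of the level-$m$ pieces, Corollary~\ref{multiplicitysl} for the isomorphism pattern, and Corollary~\ref{Wpmsamedimension} for the equality of dimensions of $\co W^{\pm}_{0,l}$. You supply a bit more detail than the paper's terse proof — in particular, you spell out the relabelling step (that the set $\{\chis_k\}$ is intrinsic, that the reindexing is a cyclic shift, and that the normalisation sends $j$ to $0$ so that $i+k\equiv 0\pmod{\n}$) and note that ``$\chis_0|_{(\TS\cap\KS)^2}=1$'' is exactly the statement that $\chis_0|_{\TS\cap\KS}$ is quadratic via the splitting of Lemma~\ref{K_i}; these are the same checks the paper implicitly delegates to Lemmas~\ref{chis_icondition} and~\ref{varpichar^4distict}.
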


\begin{proof}
	The decomposition and irreducibility results follow from Corollary~\ref{mainresultsl2}. The multiplicity results are shown in Corollary~\ref{multiplicitysl}, and the fact that $\co W_{0,l}^{+}$ and $\co W_{0,l}^{-}$ have the same degree follows from Corollary~\ref{Wpmsamedimension}.
\end{proof}

%===============================================================
\bibliographystyle{abbrv}
\bibliography{K-type-paper}
%=====================================
%\Addresses
\end{document}